\newtheorem{thm}{Theorem}[section]
\newtheorem{lem}{Lemma}[section]
\newtheorem{cor}{Corollary}[section]
\newtheorem{prop}{Proposition}[section]
\newtheorem{exam}{Example}[section]
\newcounter{myalgctr}
\newenvironment{rem}{%      define a custom environment
   \vskip1mm\indent%         create a vertical offset to previous material
   \refstepcounter{myalgctr}% increment the environment's counter
   \textbf{Remark \themyalgctr}% or \textbf, \textit, ...
   }{\hfill$\diamond$\par}  %          create a vertical offset to following material
\numberwithin{myalgctr}{section}
\providecommand{\norm}[1]{\left\lVert#1\right\rVert}
\DeclareMathOperator*{\argmin}{arg\,min}
\def\namedlabel#1#2{\begingroup
    #2%
    \def\@currentlabel{#2}%
    \phantomsection\label{#1}\endgroup
}
\newcommand{\vertiii}[1]{{\left\vert\kern-0.25ex\left\vert\kern-0.25ex\left\vert #1
    \right\vert\kern-0.25ex\right\vert\kern-0.25ex\right\vert}}
\begin{document}
 \begin{frontmatter}
\title{Deterministic Inequalities for Smooth M-estimators}
\runtitle{Deterministic Inequalities for $M$-estimators}
\begin{aug}
  \author{\fnms{Arun Kumar}  \snm{Kuchibhotla}\ead[label=e1]{arunku@wharton.upenn.edu}}
%  ,
%  \author{\fnms{Lawrence D.} \snm{Brown}},
%  \author{\fnms{Andreas} \snm{Buja}},
%  \author{\fnms{Edward I.} \snm{George}},
%  \and
%  \author{\fnms{Linda} \snm{Zhao}}
    % \ead[label=e3]{third@somewhere.com}%
    % \ead[label=u1,url]{http://www.foo.com}}

  \runauthor{Arun K. Kuchibhotla}

  \affiliation{University of Pennsylvania}

  \address{University of Pennsylvania\\ \printead{e1}}
%\thankstext{t1}{Corresponding author.}

\end{aug}
%\maketitle
\begin{abstract}
Ever since the proof of asymptotic normality of maximum likelihood estimator by \cite{Cram46}, it has been understood that a basic technique of the Taylor series expansion suffices for asymptotics of $M$-estimators with smooth/differentiable loss function. Although the Taylor series expansion is a purely deterministic tool, the realization that the asymptotic normality results can also be made deterministic (and so finite sample) received far less attention. With the advent of big data and high-dimensional statistics, the need for finite sample results has increased. In this paper, we use the (well-known) Banach fixed point theorem to derive various deterministic inequalities that lead to the classical results when studied under randomness. In addition, we provide applications of these deterministic inequalities for crossvalidation/subsampling, marginal screening and uniform-in-submodel results that are very useful for post-selection inference and in the study of post-regularization estimators. Our results apply to many classical estimators, in particular, generalized linear models, non-linear regression and cox proportional hazards model. Extensions to non-smooth and constrained problems are also discussed.
\end{abstract}
\end{frontmatter}
\section{Introduction}
One of the basic problems of statistics concerns estimation of parameters or functionals of a population based on a sample of observations. A large class of estimators in statistics are obtained as minimizers of some function of the observations, that is, estimators $\hat{\theta}_n$ are obtained as
\begin{equation}\label{eq:FirstMEstimatorDefinition}
\hat{\theta}_n := \argmin_{\theta\in\Theta_n}\,\mathbb{M}_n(\theta; Z_1, \ldots, Z_n),
\end{equation}
for some parameter space $\Theta_n$ (possibly depending on the sample size $n$) and a function $\mathbb{M}_n(\theta; Z_1, \ldots, Z_n)$ written explicitly as a function of the parameter argument $\theta$ and observations $Z_1, \ldots, Z_n$. Here the random variables $Z_1, \ldots, Z_n$ take values in a measurable space (not necessarily Euclidean) and are not required to be either independent or to be identically distributed. For instance, the ordinary least squares (OLS) linear regression estimator $\hat{\beta}_n$ based on regression data $Z_1 = (X_1, Y_1),$ $\ldots$, $Z_n = (X_n, Y_n)\in\mathbb{R}^{p + 1}$ is given by
\begin{equation}\label{eq:OLSEstimator}
\hat{\beta}_n := \argmin_{\theta\in\mathbb{R}^p}\,\frac{1}{n}\sum_{i=1}^n \left\{Y_i - X_i^{\top}\theta\right\}^2.
\end{equation}
Estimators of the type~\eqref{eq:FirstMEstimatorDefinition} are referred to as $M$-estimators in \cite{VdvW96}. 

The study of the (asymptotic) properties of $M$-estimators is an ever-evolving research area in statistics. One of the most general and widely used frameworks for this study can be found in Section 3.2 of \cite{VdvW96}. The results in this section require a stochastic equicontinuity assumption which in turn requires controlling the supremum of a stochastic process. Under certain differentiability assumptions on $\mathbb{M}_n$, this equicontinuity assumption can be easily validated. Although there are numerous empirical process techniques to verify the equicontinuity assumption under independence, we do not know of such general techniques in case of dependent observations. In this paper, we provide a general way of proving deterministic inequalities for understanding the estimator $\hat{\theta}_n$ which only requires randomness in verifying convergence of remainder terms to zero. This approach sacrifices the level of generality of \cite{VdvW96} with certain smoothness assumptions but provides optimal rates as well as tail bounds.

Deterministic inequalities for $M$-estimators to be discussed were shown in \cite{Uniform:Kuch18} for the OLS estimator~\eqref{eq:OLSEstimator}. The proofs there are much easier because of the explicit/closed-form OLS solution. It should be mentioned here that the idea of deriving results for $M$-estimators without assuming a particular dependence structure is not new and can be found in the works of \cite{VdvW96}, \cite{Yuan98}, \cite{HjortPollard}, \cite{Geyer13} and \cite{MR3683985}. This list is by no means exhaustive.
%%%%%%%%%%%%%%%%%%%%%%%%%%%%%%%%%%%%%%%%%%%%%%%
%%%%%%%%%%%%%%%%%%%%%%%%%%%%%%%%%%%%%%%%%%%%%%%
\subsection{The need for Deterministic Inequalities}
A starting point for this paper is \cite{HjortPollard}; this paper was available since 1993. One of the main conclusions of \cite{HjortPollard} is the following: pointwise convergence of a random convex objective function to a fixed function implies the consistency, rate of convergence and asymptotic distribution of the (global) minimizer. Some natural follow-up questions are ``what happens if the parameter space changes with $n$? What if the dimension grows? What if the dependence between the observations changes with $n$?''. These questions are also hard to answer from classical asymptotic normality results. However, the study of remainders in a single deterministic inequality can answer all these questions in a simple way. In this respect, deterministic inequalities unify the study of properties of the solutions of an estimating equation. 

Another source of motivation for deterministic inequalities is the need for understanding a collection of many estimators in some statistical applications. For instance, model-selection plays a pivotal role in data analysis where it is of interest to choose a ``good parsimonious'' model out of a (fixed) collection of models. In this case, to understand how a  model-selection procedure works, it is necessary to study \emph{simultaneously} the properties of all the  estimators in the collection of models. Depending on the number of models in the collection and the dependence between the observations, classical asymptotic results do not provide a clear understanding while deterministic inequalities (if they exist) provide detailed knowledge without any difficulty. Some of these examples will be presented later.
%%%%%%%%%%%%%%%%%%%%%%%%%%%%%%%%%%%%%%%%%%%%%%%
%%%%%%%%%%%%%%%%%%%%%%%%%%%%%%%%%%%%%%%%%%%%%%%
\subsection{Can we expect Deterministic Inequalities?}
As hinted in the abstract, classical asymptotic normality results are essentially based on the Taylor series expansion which is a deterministic tool. So, a first order Taylor series expansion of the estimating function (with explicit bounds on the remainder) and inversion implies explicit bounds for the solutions of the estimating equation.

A clearer picture can be seen through functionals. Let $\hat{\theta}_n := \theta(P_n)$ and $\theta_0 := \theta(P)$ be defined, respectively, as solutions of the equations
\[
\int \psi(\theta; w)dP_n(w) = 0\quad\mbox{and}\quad\int \psi(\theta; w)dP(w) = 0,
\] 
where $\psi(\cdot; \cdot)$ is a fixed function and $P_n$ is the empirical probability measure based on the observations $W_1$, $W_2$, $\ldots$, $W_n$. The functional here is $\theta(\cdot)$ which is a function on the space of all probability measures. Under most independence and dependence settings, it is known that $P_n$ is ``close'' to $P$ (in a suitable metric). So, if the functional $\theta(\cdot)$ is continuous, then we get
\[
\theta(P_n) - \theta(P)~\approx~ 0.
\]
Further if the functional $\theta(\cdot)$ is Fr{\'e}chet differentiable, then
\[
\theta(P_n) - \theta(P) - \theta'(P;\,P_n - P) = o(d(P_n, P)),
\]
for some metric $d(\cdot, \cdot)$. Here $\theta'(P; P_n - P)$ denotes the directional derivative of $\theta(\cdot)$ at $P$ in the direction of $P_n - P$. This is essentially the framework of von Mises calculus; see~\cite{clarke2018robustness} for details. It is clear that if the continuity assumption is replaced by H{\"o}lder continuity, that is, $\norm{\theta(P_n) - \theta(P)} \le d^{\alpha}(P_n, P)$ for some metric $d(\cdot, \cdot)$ and $\alpha > 0$, then we got a deterministic inequality for estimation error. Similarly, if the differentiability assumption is made precise in terms of some explicit bounds, then we can write
\[
\norm{\theta(P_n) - \theta(P) - \theta'(P;\,P_n - P)} \le Cd^{1 + \alpha}(P_n, P),
\]
for some constant $C > 0$ and $\alpha > 0$. This again provides a deterministic inequality for an expansion of the estimator. In the following sections, we provide these explicit bounds for various $M$-estimation problems. 

There is a also a rich literature in the field of mathematical programming where the problem
\[
\int \psi(\theta; w)dP_n(w) = 0,
\]
is seen as a ``perturbation'' of the problem $\int \psi(\theta; w)dP(w) = 0$ (since $P_n\approx P$). There are a large number of sensitivity and stability results that show how much different $\theta(P_n)$ is from $\theta(P)$; see, for example, \cite{romisch2007stability}, \cite{rockafellar2009variational}. In the current paper, we restrict mostly to smooth $M$-estimators, meaning differentiable $\psi(\cdot, w)$ with a H{\"o}lder continuous derivative. There are numerous results in mathematical programming literature that work for non-smooth functions and also, $M$-estimators with constraints. We hope to tackle these additional problems in the future.   
%%%%%%%%%%%%%%%%%%%%%%%%%%%%%%%%%%%%%%%%%%%%%%%
%%%%%%%%%%%%%%%%%%%%%%%%%%%%%%%%%%%%%%%%%%%%%%%
\subsection{Organization}
The remainder of the paper is organized as follows. In Section~\ref{sec:Banach}, we state a Banach fixed point theorem in a form suitable for our purposes. This result appeared in a similar form in \cite{Yuan98} and \cite{jacod2018review}. Also, a result similar to Newton-Kantorovich theorem is provided. All our subsequent results follow from these results. Applications to $M$-estimators based on convex loss functions including generalized linear models (GLMs) and Cox proportional hazards model are given in Sections~\ref{sec:GLM} and~\ref{sec:Cox}, respectively. In Section~\ref{sec:NonConvex}, we provide deterministic inequalities for least squares non-linear regression. In Section~\ref{sec:Constrained}, we provide a deterministic inequality for an equality constrained minimization problem. In Section~\ref{sec:Applications}, we present three applications of our deterministic inequalities for cross-validation/subsampling, marginal screening and post-selection inference. We conclude with some remarks and future directions in Section~\ref{sec:Conclude}.
%%%%%%%%%%%%%%%%%%%%%%%%%%%%%%%%%%%%%%%%%
%%%%%%%%%%%%%%%%%%%%%%%%%%%%%%%%%%%%%%%%%
\section{The Basic Result}\label{sec:Banach}
In this section, we state a basic inversion theorem that implies existence of solutions to an equation in a certain neighborhood. This result is of primary importance to us since it gives explicit bounds on the radius of the neighborhood and so can provide finite sample results in statistical applications. The following result is stated in a similar form in \cite{Yuan98}. Define for any $\theta_0\in\mathbb{R}^q$ and $r > 0$, the closed ball as
\[
B(\theta_0, r) := \left\{\theta\in\mathbb{R}^q:\,\norm{\theta - \theta_0}_2 \le r\right\}.
\]
For any matrix $A$, let $\norm{A}_{op}$ denote the operator norm of $A$. Also, for any function $f(\cdot)$, $\nabla f(\cdot)$ and $\nabla_2f(\cdot)$ denote the gradient and Hessian of $f(\cdot)$.
\begin{thm}\label{thm:NonSingular}%[Lemma 2 of \cite{Yuan98}]
Let $f(\cdot)$ be an everywhere differentiable mapping from an open subset of $\mathbb{R}^q$ into $\mathbb{R}^q$. Let $A$ be a non-singular matrix. If for some $\theta_0\in\mathbb{R}^{q}$, $r > 0$ and $\varepsilon \in [0, 1]$,
\begin{equation}\label{assump:Contraction}
\norm{A^{-1}\left(A - \nabla f(\theta)\right)}_{op} \le \varepsilon\quad\mbox{for all}\quad \theta\in B(\theta_0, r),
\end{equation}
and 
\begin{equation}\label{assump:FixedPoint}
\norm{A^{-1}f(\theta_0)}_2 \le r(1 - \varepsilon),
\end{equation}
then there exists a unique vector $\theta^*\in B(\theta_0, r)$ satisfying $f(\theta^*) = 0$ and
\[
\frac{1}{1 + \varepsilon}\norm{A^{-1}f(\theta_0)}_2 \le \norm{\theta^{\star} - \theta_0}_2 \le \frac{1}{1 - \varepsilon}\norm{A^{-1}f(\theta_0)}_2.
\]
% Furthermore,
% \[
% \norm{\theta^* - \theta_0 - A^{-1}f(\theta_0)}_2 \le 
% \]
\end{thm}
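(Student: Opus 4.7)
The plan is to recast the equation $f(\theta^\star)=0$ as a fixed point problem for the Newton-type map
\[
T(\theta) \;:=\; \theta - A^{-1}f(\theta),
\]
so that $f(\theta^\star)=0$ is equivalent to $T(\theta^\star)=\theta^\star$, and then to invoke the Banach contraction mapping theorem on the closed ball $B(\theta_0,r)$, which is a complete metric space under the Euclidean norm.

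First I would verify that $T$ maps $B(\theta_0,r)$ into itself. The key identity, obtained from the fundamental theorem of calculus applied along the segment from $\theta_0$ to $\theta$, is
\[
T(\theta) - \theta_0 \;=\; -A^{-1}f(\theta_0) \;+\; A^{-1}\!\int_0^1 \bigl[A - \nabla f(\theta_0 + t(\theta-\theta_0))\bigr](\theta-\theta_0)\,dt.
\]
Since $B(\theta_0,r)$ is convex, every point of the segment lies in the ball, so hypothesis~\eqref{assump:Contraction} bounds the integrand's operator norm by $\varepsilon$. Combined with hypothesis~\eqref{assump:FixedPoint}, this yields $\|T(\theta)-\theta_0\|_2 \le r(1-\varepsilon) + \varepsilon r = r$. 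An essentially identical calculation, now expanding $T(\theta_1)-T(\theta_2)$ along the segment from $\theta_2$ to $\theta_1$, shows
\[
\|T(\theta_1) - T(\theta_2)\|_2 \;\le\; \varepsilon\,\|\theta_1 - \theta_2\|_2 \qquad \text{for all } \theta_1,\theta_2\in B(\theta_0,r).
\]
When $\varepsilon<1$, the Banach fixed point theorem produces a unique $\theta^\star\in B(\theta_0,r)$ with $T(\theta^\star)=\theta^\star$, hence $f(\theta^\star)=0$. The boundary case $\varepsilon=1$ forces $A^{-1}f(\theta_0)=0$ by~\eqref{assump:FixedPoint}, and so $\theta^\star=\theta_0$ works trivially; uniqueness in this degenerate case follows because any other root in the ball would contradict the nonsingularity of $\nabla f$ implied by~\eqref{assump:Contraction}.

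For the two-sided bound on $\|\theta^\star-\theta_0\|_2$, I would apply the same integral identity once more, this time at $\theta=\theta^\star$. Rearranging gives
\[
\theta^\star - \theta_0 + A^{-1}f(\theta_0) \;=\; A^{-1}\!\int_0^1\!\bigl[A-\nabla f(\theta_0+t(\theta^\star-\theta_0))\bigr](\theta^\star-\theta_0)\,dt,
\]
whose right-hand side has norm at most $\varepsilon\|\theta^\star-\theta_0\|_2$. The triangle inequality applied in both directions then yields
\[
\bigl|\,\|\theta^\star-\theta_0\|_2 - \|A^{-1}f(\theta_0)\|_2\,\bigr| \;\le\; \varepsilon\,\|\theta^\star-\theta_0\|_2,
\]
and solving for $\|\theta^\star-\theta_0\|_2$ gives the stated upper bound $\|A^{-1}f(\theta_0)\|_2/(1-\varepsilon)$ and lower bound $\|A^{-1}f(\theta_0)\|_2/(1+\varepsilon)$.

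The only genuinely delicate point is the invariance of $B(\theta_0,r)$ under $T$: one must integrate the Jacobian bound along segments lying inside the ball, which is why the hypothesis~\eqref{assump:Contraction} is imposed uniformly over all of $B(\theta_0,r)$ rather than only at $\theta_0$. Everything else is a direct and essentially algebraic consequence of the contraction inequality.
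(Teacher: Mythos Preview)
Your proof is correct and follows essentially the same route as the paper: both define the Newton-type map $\theta\mapsto\theta-A^{-1}f(\theta)$, verify it is a self-map and contraction on $B(\theta_0,r)$, and invoke the Banach fixed point theorem. Your use of the integral form of the mean value inequality is in fact slightly cleaner than the paper's argument, which writes $f(\theta^\star)-f(\theta_0)=\nabla f(\bar\theta)(\theta^\star-\theta_0)$ at a single intermediate point $\bar\theta$---a step that is not literally valid for vector-valued $f$---whereas your integral identity sidesteps this issue entirely.
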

The proof of this theorem is given in Appendix~\ref{appsec:NonSingular}. Also, see~\cite{jacod2018review} for related results and applications of this result for the asymptotics of statistical estimating functions.

Interestingly (from the statistical viewpoint), Theorem~\ref{thm:NonSingular} does not require any special properties for $\theta_0$. This particular fact becomes important in applications related to subsampling/cross-validation in Section~\ref{subsec:CrossValidation}. In the following sections, we apply this result with $f(\theta) = \nabla \mathbb{M}_n(\theta), A = \nabla_2\mathbb{M}_n(\theta_0)$ (which requires the objective function $\mathbb{M}_n(\cdot)$ to be twice differentiable) and deterministically, the result shows that the estimation error $\|\hat{\theta} - \theta_0\|_2$ is up to a constant factor same as $\norm{\nabla \mathbb{M}_n(\theta_0)}_2$. The classical results mostly use the choice $\theta_0$ that satisfies $\mathbb{E}\left[\nabla\mathbb{M}_n(\theta_0)\right] = 0$. If the objective function $\mathbb{M}_n(\cdot)$ is an average, then the control of $\norm{\nabla \mathbb{M}_n(\theta_0)}_2$ is the same as controlling a mean zero average which is studied in probability and statistics for almost all practically useful dependence structures. 

For simplicity, the result is stated with $\mathbb{R}^q$ as the domain but it is not difficult to extend the proof to more general Banach spaces under Fr{\'e}chet differentiability. This extension is useful in deriving deterministic inequalities for smoothing spline estimators for non-parametric regression/density estimation; see~\cite{shang2010convergence} and~\cite{shang2013local}. Another interesting extension can be obtained by replacing Euclidean norm-$\norm{\cdot}_2$ by a general norm $\norm{\cdot}_N$ on $\mathbb{R}^q$. In this case, the assumptions~\eqref{assump:Contraction} and~\eqref{assump:FixedPoint} need to be rewritten as
\[
\norm{A^{-1}(A - \nabla f(\theta))}_{N\to N} \le \varepsilon,\quad\mbox{for all}\quad \theta\in B_{r,N}(\theta_0),
\]
and
\[
\norm{A^{-1}f(\theta_0)}_N \le r(1 - \varepsilon),
\]
where $\norm{\cdot}_N$ represents a norm on $\mathbb{R}^q$ and for any matrix $K$,
\[
\norm{K}_{N\to N} := \sup\{x^{\top}Kx:\,\norm{x}_N \le 1\},
\]
and $B_{r,N}(\theta_0) := \{\theta\in\mathbb{R}^q:\,\norm{\theta - \theta_0}_N \le r\}$.

In many statistical applications of interest, it is also of interest to prove a first order (influence function) approximation for the estimator. For these applications, we present the following extended result under a strengthening of the assumptions of Theorem~\ref{thm:NonSingular}. The following theorem is closely related to the well-known Newton-Kantorovich Theorem about Newton's method of root finding.
\begin{thm}\label{thm:NewtonStepDifferentiable}
Let $f(\cdot)$ be an everywhere differentiable mapping from an open subset of $\mathbb{R}^q$ into $\mathbb{R}^q$. If for some $\theta_0\in\mathbb{R}^{q}$, $L\ge 0$, and $\alpha\in(0, 1]$,
\begin{equation}\label{eq:HolderContinuity}
\norm{[\nabla f(\theta_0)]^{-1}\left(\nabla f(\theta_0) - \nabla f(\theta)\right)}_{op} \le L\norm{\theta - \theta_0}_2^{\alpha},
\end{equation}
whenever $(3L)^{1/\alpha}\norm{\theta - \theta_0}_2 \le 1$ and 
\begin{equation}\label{eq:AssumptionNewton}
\norm{[\nabla f(\theta_0)]^{-1}f(\theta_0)}_2 \le \frac{2}{3(3L)^{1/\alpha}},
\end{equation}
then there exists a unique solution $\theta^{\star}$ of $f(\theta) = 0$ in $B(\theta_0, 1.5\|[\nabla f(\theta_0)]^{-1}f(\theta_0)\|_2)$ and that solution $\theta^{\star}$ satisfies
\begin{equation}\label{eq:ExpansionNewton}
\norm{\theta^{\star} - \theta_0 + [\nabla f(\theta_0)]^{-1}f(\theta_0)}_2 \le (1.5)^{1 + \alpha}L\norm{[\nabla f(\theta_0)]^{-1}f(\theta_0)}_2^{1 + \alpha}.
\end{equation}
\end{thm}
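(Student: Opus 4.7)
The plan is to deduce Theorem~\ref{thm:NewtonStepDifferentiable} as a quantitative refinement of Theorem~\ref{thm:NonSingular}: first apply the basic fixed-point result with $A := \nabla f(\theta_0)$ and a carefully tuned contraction constant $\varepsilon$ and radius $r$ to extract existence, uniqueness, and the crude a priori bound $\|\theta^\star - \theta_0\|_2 \le 1.5\|A^{-1}f(\theta_0)\|_2$; then bootstrap via a fundamental-theorem-of-calculus representation of $f(\theta^\star)-f(\theta_0)$ to upgrade this to the Newton-step approximation~\eqref{eq:ExpansionNewton}.

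For the first step I would take $\varepsilon := 1/3$ and $r := 1.5\|A^{-1}f(\theta_0)\|_2$. Hypothesis~\eqref{eq:AssumptionNewton} is equivalent to $r \le (3L)^{-1/\alpha}$, so every $\theta \in B(\theta_0,r)$ satisfies the side condition $(3L)^{1/\alpha}\|\theta-\theta_0\|_2 \le 1$ under which the Hölder estimate~\eqref{eq:HolderContinuity} is valid, producing $\|A^{-1}(A-\nabla f(\theta))\|_{op} \le Lr^\alpha \le 1/3$ on that ball; this is exactly~\eqref{assump:Contraction}. Meanwhile~\eqref{assump:FixedPoint} collapses to the equality $\|A^{-1}f(\theta_0)\|_2 = r(1-\varepsilon)$. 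Hence Theorem~\ref{thm:NonSingular} supplies a unique $\theta^\star \in B(\theta_0,r)$ with $f(\theta^\star)=0$ and $\|\theta^\star - \theta_0\|_2 \le (1-\varepsilon)^{-1}\|A^{-1}f(\theta_0)\|_2 = 1.5\|A^{-1}f(\theta_0)\|_2$.

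To pass from this crude bound to~\eqref{eq:ExpansionNewton}, I would use the identity
\begin{equation*}
0 = f(\theta^\star) = f(\theta_0) + \int_0^1 \nabla f(\theta_0 + t(\theta^\star - \theta_0))(\theta^\star - \theta_0)\,dt,
\end{equation*}
multiply through by $A^{-1}$, and rearrange to obtain
\begin{equation*}
\theta^\star - \theta_0 + A^{-1}f(\theta_0) = \int_0^1 A^{-1}\bigl(A - \nabla f(\theta_0 + t(\theta^\star - \theta_0))\bigr)(\theta^\star - \theta_0)\,dt.
\end{equation*}
Each interpolant $\theta_0 + t(\theta^\star-\theta_0)$ lies in $B(\theta_0,r)$, where~\eqref{eq:HolderContinuity} is valid, so taking operator norms inside the integral yields
\begin{equation*}
\|\theta^\star - \theta_0 + A^{-1}f(\theta_0)\|_2 \le L\|\theta^\star-\theta_0\|_2^{1+\alpha}\int_0^1 t^\alpha\,dt \le L\|\theta^\star-\theta_0\|_2^{1+\alpha},
\end{equation*}
and substituting the a priori bound $\|\theta^\star-\theta_0\|_2 \le 1.5\|A^{-1}f(\theta_0)\|_2$ produces~\eqref{eq:ExpansionNewton}.

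I expect the main obstacle to be constant-chasing rather than analysis: the triple consisting of $\varepsilon=1/3$, $r = 1.5\|A^{-1}f(\theta_0)\|_2$, and the factor $\tfrac{2}{3}$ in~\eqref{eq:AssumptionNewton} must be calibrated so that simultaneously (i) the Hölder radius $(3L)^{-1/\alpha}$ engineered into~\eqref{eq:AssumptionNewton} covers $B(\theta_0,r)$, (ii) the contraction bound $Lr^\alpha \le \varepsilon$ holds on that ball, and (iii) hypothesis~\eqref{assump:FixedPoint} holds with equality so that the bound $\|\theta^\star-\theta_0\|_2 \le r$ coming out of Theorem~\ref{thm:NonSingular} is as tight as possible. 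Any other calibration would either shrink the region of existence or inflate the $(1.5)^{1+\alpha}$ constant in~\eqref{eq:ExpansionNewton}, so the trickiest part is recognizing that this particular balance is forced.
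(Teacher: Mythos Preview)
Your proof is correct and follows essentially the same route as the paper: verify the hypotheses of Theorem~\ref{thm:NonSingular} with $A=\nabla f(\theta_0)$, $\varepsilon=1/3$, $r=1.5\|A^{-1}f(\theta_0)\|_2$, then linearize $f(\theta^\star)-f(\theta_0)$ around $\theta_0$ and feed in the a~priori bound $\|\theta^\star-\theta_0\|_2\le r$. The only difference is that the paper invokes the mean value theorem to write $f(\theta^\star)-f(\theta_0)=\nabla f(\bar\theta)(\theta^\star-\theta_0)$ for some intermediate $\bar\theta$, whereas you use the integral remainder $\int_0^1 \nabla f(\theta_0+t(\theta^\star-\theta_0))\,dt\,(\theta^\star-\theta_0)$; your version is the safer choice for a vector-valued $f$, since the single-point mean value theorem need not hold in $\mathbb{R}^q$, and incidentally it produces the sharper constant $(1+\alpha)^{-1}(1.5)^{1+\alpha}L$ before you discard the factor $\int_0^1 t^\alpha\,dt$.
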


The expansion~\eqref{eq:ExpansionNewton} of Theorem~\ref{thm:NewtonStepDifferentiable} is essentially proving that the first iteration of Newton's scheme is ``close'' to the true solution $\theta^{\star}$ and is a very special case of the general superlinear convergence statement\footnote{A sequence of iterates $\{\theta_n\}_{n\ge 0}$ is said to converge superlinearly if $\norm{\theta_{n+1} - \theta_n}_2 = o(\norm{\theta_n - \theta_{n-1}}_2)$ as $n\to\infty.$ In our case $\theta_0$ is the initial point and $\theta_1 = \theta_0 - (\nabla f(\theta_0))^{-1}f(\theta_0)$ is the first iterate.}. The classical Newton-Kantorovich theorem (Theorem 1.1 of~\cite{yamamoto1985unified}) usually requires a slightly stronger condition on the derivative of $f$ with $\alpha = 1$ and proves explicit bounds for all iterations of the Newton's scheme. Also, see~\cite{clarke2007convergence} for some applications in statistical problems.

An important message from Theorem~\ref{thm:NewtonStepDifferentiable} is that any iterative algorithm that requires conditions only on the initial point\footnote{Convergence analysis that require conditions only on the initial point is usually referred to as semilocal analysis.} ($\theta_0$) and proves superlinear convergence can be used to prove expansion results like~\eqref{eq:ExpansionNewton}. 
% Similarly, any iterative method proving linear convergence can be used to prove estimation error bounds as in Theorem~\ref{thm:NonSingular}. 
Apart from the classical Newton's method (that requires differentiable $f$), there are numerous extensions allowing for non-smooth $f$ including B-differentiable functions (\cite{qi1993nonsmooth}) and normal mappings (\cite{robinson1994newton}). For a general treatment of Newton's method, see~\cite{argyros2008convergence}.

From the proof, it is easy to replace the right hand side of assumption~\eqref{eq:HolderContinuity} by any non-decreasing function $\omega(\cdot)$ of $\norm{\theta - \theta_0}_2$; this extension is useful for nonlinear regression as in Section~\ref{sec:NonConvex}. As before, an extension of Theorem~\ref{thm:NewtonStepDifferentiable} to Banach spaces is possible. In fact, Theorem 1.1 of~\cite{yamamoto1985unified} holds for Banach spaces. 

Most commonly used $M$-estimators in statistics or machine learning are based on objective functions that are averages. In this case, $f(\cdot)$ and $\nabla f(\cdot)$ are also averages. Averages (under independence as well as dependence) have been the subject of investigation for decades in statistics and probability literature. Thus, our results imply that randomness plays the role only in controlling the averages appearing as the remainders.

\begin{rem}\,(Implications for the Landscape of Non-convex Losses)\label{rem:Landscape} 
Deterministic inequalities of the type obtained in Theorems~\ref{thm:NonSingular} and~\ref{thm:NewtonStepDifferentiable} have implications for local minimizers in statistical applications. Suppose there are $n$ identically distributed observations $W_1, \ldots, W_n$ and the parameter of interest is $\theta_0\in\mathbb{R}^p$ that is defined as a global minimizer of $\mathbb{E}[\ell(\theta, W_1)]$. Then a natural estimator of $\theta_0$ is
\[
\hat{\theta}_n := \argmin_{\theta\in\mathbb{R}^{p}}\,\frac{1}{n}\sum_{i=1}^n \ell(\theta, W_i).
\]
If the loss function $\ell(\cdot, w)$ is a non-convex function, then it is in general very hard to obtain a global minimizer $\hat{\theta}_n$. For this reason, it is of significant interest to understand the behavior of local minimizers or critical points of the sample loss function. Recently \cite{mei2016landscape} proved that the landscape of the sample loss function is similar to that the population loss function under certain assumptions including independent and identically distributed (iid) observations. 

Using a deterministic inequality, this fact becomes clear. Suppose, for some $K\ge 1$, $\theta_0^{(1)}, \theta_0^{(2)}, \ldots, \theta_0^{(K)}$ represent the critical values of $\mathbb{E}[\ell(\theta, W_1)]$, that is,
\[
\nabla \mathbb{E}[\ell(\theta, W_1)]\big|_{\theta = \theta_0^{(j)}} = 0,\quad\mbox{for all}\quad 1\le j\le K.
\]
Then under various dependence settings, it is expected that for any $1\le j\le K$,
\[
\norm{\frac{1}{n}\sum_{i=1}^n \nabla\ell(\theta_0^{(j)}, W_i)}_2 = o_p(1).
\]
This implies that assumption~\eqref{eq:AssumptionNewton} of Theorem~\ref{thm:NewtonStepDifferentiable} is satisfied in probability. Thus, by Theorem~\ref{thm:NewtonStepDifferentiable}, it follows that there is a locally unique solution $\hat{\theta}_n^{(j)}$ near $\theta_0^{(j)}$ that furthermore satisfies a linear expansion. This proves that the landscape of the sample loss function is similar to the landscape of the population loss function under more general setting than in~\cite{mei2016landscape}. Note however that our result does not imply critical points for $\mathbb{E}[\ell(\theta, W_1)]$ near the critical points of $\sum_{i=1}^n \ell(\theta, W_i)$.%; see Section~\ref{sec:NonConvex} for more details.
\end{rem}

%%%%%%%%%%%%%%%%%%%%%%%%%%%%%%%%%%%%%%%%%
%%%%%%%%%%%%%%%%%%%%%%%%%%%%%%%%%%%%%%%%%

\section{Deterministic Inequality for Smooth Convex Loss Functions}\label{sec:GLM}
In this section, we consider $M$-estimators obtained from objective functions that are averages of convex loss functions. Consider the estimator
\begin{equation}\label{eq:ConvexMEst}
\hat{\theta}_n := \argmin_{\theta\in\mathbb{R}^{q}}\,\frac{1}{n}\sum_{i=1}^n L(\theta; W_i),
\end{equation}
for some observations $W_1, \ldots, W_n$ and some loss function $L(\cdot; w)$ that is convex and twice differentiable. Several important examples are as follows:
\begin{exam}[Maximum Likelihood]

\emph{Maximum likelihood estimator (MLE) is one of the most popular estimators in statistics; widely used in practice and backed by the asymptotic efficiency theory. Suppose $W_1, \ldots, W_n$ are iid random variables from a parametric family (of densities) $\{f_{\theta}(\cdot):\,\theta\in\Theta\}$. The MLE is defined as}
\[
\hat{\theta}_n := \argmin_{\theta\in\Theta}\,\frac{1}{n}\sum_{i=1}^n \left\{-\log f_{\theta}(W_i)\right\}.
\]

\emph{If the parametric model family is an exponential family, then the negative log likelihood ($-\log f_{\theta}(\cdot)$) is convex in $\theta$. Even though the construction of the estimator is motivated by the hypothesis/assumption of iid random variables with density belonging to the parametric model postulated, it is important to understand the implication of  misspecification of different directions; see \cite{Hub67} and \cite{Buja14} for a discussion.}
\end{exam}

\begin{exam}[Generalized Linear Models and variants]

\emph{Regression analysis provides a large class of estimation problems which emphasize the problem of estimating the ``relation'' between a response $(Y)$ and a collection of predictors $(X)$. Generalized linear models (GLMs) form an important sub-class of regression models. More generally, we can consider the estimator}
\[
\hat{\theta}_n := \argmin_{\theta\in\mathbb{R}^q}\,\frac{1}{n}\sum_{i=1}^n L(\theta^{\top}X_i; X_i, Y_i),
\]
\emph{for regression data} $W_1 = (X_1, Y_1), \ldots, W_n = (X_n, Y_n).$
\emph{Some specific examples of $L(\cdot; \cdot, \cdot)$ are as follows:}
\begin{enumerate}
	\item \emph{Canonical GLMs are obtained by taking}
	\[
	L(u; x, y) = \psi(u) - yu,
	\]
	\emph{for some convex function $\psi(\cdot)$. For instance, OLS is obtained when $\psi(u) = u^2/2$, logistic regression is obtained when $\psi(u) = \log(1 + \exp(u))$ and Poisson regression is obtained when $\psi(u)= \exp(u)$.}

	\emph{Even though canonical GLMs are motivated from an exponential family for the conditional distribution of $Y$ given $X$, one can consider functions $L(\cdot; \cdot, \cdot)$ that do not correspond to the log-likelihood of an exponential family. For example, probit regression is obtained when}
	\[
	L(u; x, y) = -y\log\Phi(u) - (1 - y)\log(1 - \Phi(u))\quad\mbox{for}\quad y\in[0, 1],
	\]
	\emph{and negative binomial regression corresponds to}
	\[
	L(u; x, y) = -yu + \left(y + \alpha^{-1}\right)\log(1 + \alpha\exp(u)),
	\]
  \emph{for some $\alpha > 0$.}
	\item \emph{Robust regression is an important aspect of practical data analysis and a simple way to robustify an estimator is by ignoring observations that are outliers. In this respect, the loss functions of the form}
	\[
	L(u, x, y) = h(x, y)\ell(u, y),
	\]
	\emph{are of interest. For instance, one can take $\ell(u, y) = \psi(u) - yu$ as in the GLM loss function and take $h(\cdot, \cdot)$ to be a ``down-weighting'' function. Choices of weight functions for robust regression can be found in \cite{loh2017statistical}.}
\end{enumerate}
\end{exam}

Motivated by these examples, we prove the following result for $\hat{\theta}_n$ obtained from the $M$-estimation problem~\eqref{eq:ConvexMEst}. For this result, consider the following notation and assumptions. For the loss function $L(\theta; w)$, let $\nabla L(\theta; w)$ and $\nabla_2 L(\theta; w)$ denote the gradient and the Hessian of the function $L(\theta; w)$ with respect to $\theta$. Set, for any $\theta\in\mathbb{R}^q$, $\delta_{n}(\theta) := 1.5\|[\hat{\mathcal{Q}}_n(\theta)]^{-1}\hat{\mathcal{Z}}_n(\theta)\|_2$, where
\begin{align*}
\hat{\mathcal{Z}}_n(\theta) &:= \frac{1}{n}\sum_{i=1}^n \nabla L(\theta; W_i)\in\mathbb{R}^q\quad\mbox{and}\quad
\hat{\mathcal{Q}}_n(\theta) := \frac{1}{n}\sum_{i=1}^n \nabla_2 L(\theta; W_i)\in\mathbb{R}^{q\times q}.
\end{align*}
Also, define for $u\ge 0$,
\[
C(u, w) := \sup_{\norm{\theta_1 - \theta_2} \le u}\,\sup_{e\in\mathbb{R}^q:\,\norm{e}_2 = 1}\frac{e^{\top}\nabla_2 L(\theta_1, w)e}{e^{\top}\nabla_2 L(\theta_2, w)e}.
\]
Note that if $L(\cdot;w)$ is strictly convex and twice differentiable for each $w$, then $C(u, w)$ is well-defined and positive. Also, note that $C(u, w) \ge 1$ for all $u$ and $w$.
\begin{description}
\item[\namedlabel{eq:ConvexityL}{(A1)}] The function $L(\theta; w)$ is convex and twice differentiable in $\theta$ for every $w$. 
\item[\namedlabel{eq:Event}{(A2)}] Fix any target vector $\theta_0\in\mathbb{R}^q$. The event $\mathcal{E}_n$ occurs where
\[
\mathcal{E}_{n} := \left\{\max_{1\le i\le n}\,C(\delta_n(\theta_0),\, W_i) \le \frac{4}{3}\right\}.
\]
\end{description}
\begin{thm}\label{thm:ConvexMEst}
Under assumptions~\ref{eq:ConvexityL} and~\ref{eq:Event}, there exists a vector $\hat{\theta}_n\in\mathbb{R}^q$ such that
\begin{equation}\label{eq:Part1ConvexMEst}
\hat{\mathcal{Z}}_n(\hat{\theta}_n) = 0,\quad\mbox{and}\quad \frac{1}{2}\delta_{n}(\theta_0) \le \|{\hat{\theta}_n - \theta_0}\|_2 \le \delta_{n}(\theta_0).
\end{equation}
Moreover,
\begin{equation}\label{eq:Part2ConvexMEst}
\norm{\hat{\theta}_n - \theta_0 + [\hat{\mathcal{Q}}_n(\theta_0)]^{-1}\hat{\mathcal{Z}}_n(\theta_0)}_2 \le \max_{1\le i\le n}\left\{C(\delta_{n}(\theta_0), W_i) - 1\right\}\delta_n(\theta_0).
\end{equation}
\end{thm}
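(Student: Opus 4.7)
My plan is to apply Theorem~\ref{thm:NonSingular} to obtain~\eqref{eq:Part1ConvexMEst} and to derive~\eqref{eq:Part2ConvexMEst} by a direct Taylor expansion, in both cases taking $f(\theta)=\hat{\mathcal{Z}}_n(\theta)$ and $A=\nabla f(\theta_0)=\hat{\mathcal{Q}}_n(\theta_0)$. The common ingredient is to translate the ratio-of-quadratic-forms bound~\ref{eq:Event} into a uniform Loewner sandwich for the Hessian over $B(\theta_0,\delta_n(\theta_0))$: applying the definition of $C(u,w)$ to the pairs $(\theta,\theta_0)$ and $(\theta_0,\theta)$ gives, for each $i$, the individual bound $C(\delta_n(\theta_0),W_i)^{-1}\nabla_2 L(\theta_0;W_i)\preceq\nabla_2 L(\theta;W_i)\preceq C(\delta_n(\theta_0),W_i)\nabla_2 L(\theta_0;W_i)$; averaging (valid because each Hessian is positive semi-definite by~\ref{eq:ConvexityL}) and invoking~\ref{eq:Event} yields
\[
\tfrac{3}{4}\,\hat{\mathcal{Q}}_n(\theta_0)\;\preceq\;\hat{\mathcal{Q}}_n(\theta)\;\preceq\;\tfrac{4}{3}\,\hat{\mathcal{Q}}_n(\theta_0),\qquad\theta\in B(\theta_0,\delta_n(\theta_0)),
\]
which, after conjugation by $\hat{\mathcal{Q}}_n(\theta_0)^{-1/2}$, is equivalent to $\|I-\hat{\mathcal{Q}}_n(\theta_0)^{-1/2}\hat{\mathcal{Q}}_n(\theta)\hat{\mathcal{Q}}_n(\theta_0)^{-1/2}\|_{op}\le\max_i C(\delta_n(\theta_0),W_i)-1\le 1/3$.

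For~\eqref{eq:Part1ConvexMEst} I would apply Theorem~\ref{thm:NonSingular} with $r=\delta_n(\theta_0)$ and $\varepsilon=1/3$, reading the operator norm in~\eqref{assump:Contraction} as the one induced by the Mahalanobis norm $\|v\|_N:=\|\hat{\mathcal{Q}}_n(\theta_0)^{1/2}v\|_2$ (the general-norm extension discussed immediately after the theorem). Then~\eqref{assump:Contraction} is exactly the operator-norm bound just derived, while~\eqref{assump:FixedPoint} reads $\|[\hat{\mathcal{Q}}_n(\theta_0)]^{-1}\hat{\mathcal{Z}}_n(\theta_0)\|_2\le\tfrac{2}{3}\delta_n(\theta_0)$, which is an identity by the definition of $\delta_n(\theta_0)$. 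Theorem~\ref{thm:NonSingular} then produces a unique $\hat{\theta}_n$ with $\hat{\mathcal{Z}}_n(\hat{\theta}_n)=0$ and
\[
\tfrac{3}{4}\,\|[\hat{\mathcal{Q}}_n(\theta_0)]^{-1}\hat{\mathcal{Z}}_n(\theta_0)\|_2\;\le\;\|\hat{\theta}_n-\theta_0\|_2\;\le\;\tfrac{3}{2}\,\|[\hat{\mathcal{Q}}_n(\theta_0)]^{-1}\hat{\mathcal{Z}}_n(\theta_0)\|_2,
\]
which collapses to~\eqref{eq:Part1ConvexMEst} after substituting $\|[\hat{\mathcal{Q}}_n(\theta_0)]^{-1}\hat{\mathcal{Z}}_n(\theta_0)\|_2=\tfrac{2}{3}\delta_n(\theta_0)$.

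For~\eqref{eq:Part2ConvexMEst} I would use the fundamental theorem of calculus applied to $\hat{\mathcal{Z}}_n$ along the segment from $\theta_0$ to $\hat{\theta}_n$. Setting $v:=\hat{\theta}_n-\theta_0$ and $H:=\int_0^1\hat{\mathcal{Q}}_n(\theta_0+tv)\,dt$, the first-order condition $\hat{\mathcal{Z}}_n(\hat{\theta}_n)=0$ becomes $0=\hat{\mathcal{Z}}_n(\theta_0)+Hv$, leading to the algebraic identity
\[
v+[\hat{\mathcal{Q}}_n(\theta_0)]^{-1}\hat{\mathcal{Z}}_n(\theta_0)\;=\;[\hat{\mathcal{Q}}_n(\theta_0)]^{-1}\bigl(\hat{\mathcal{Q}}_n(\theta_0)-H\bigr)v.
\]
Since the preliminary Loewner sandwich holds pointwise in $t\in[0,1]$, it is inherited by $H$ upon integration; combining the resulting norm bound on the right-hand side with $\|v\|_2\le\delta_n(\theta_0)$ from~\eqref{eq:Part1ConvexMEst} delivers~\eqref{eq:Part2ConvexMEst}.

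The principal obstacle is the careful passage from the Loewner sandwich, which is intrinsically a quadratic-form statement in the Mahalanobis norm induced by $\hat{\mathcal{Q}}_n(\theta_0)$, to the operator-norm bounds actually required both by Theorem~\ref{thm:NonSingular} and by the Taylor-remainder step. The general-norm extension of Theorem~\ref{thm:NonSingular} mentioned immediately after its statement is precisely the device for this conversion; once it is in place, the rest of the argument is routine matrix manipulation and substitution of $\varepsilon=1/3$.
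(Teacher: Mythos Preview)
Your proposal is essentially the paper's own proof. The paper applies Theorem~\ref{thm:NonSingular} with $f(\theta)=[\hat{\mathcal{Q}}_n(\theta_0)]^{-1}\hat{\mathcal{Z}}_n(\theta)$, $A=I$, $r=\delta_n(\theta_0)$, $\varepsilon=1/3$ (equivalent to your choice of $f=\hat{\mathcal{Z}}_n$, $A=\hat{\mathcal{Q}}_n(\theta_0)$ since only $A^{-1}f$ enters), obtains the same Loewner sandwich $\tfrac34\hat{\mathcal{Q}}_n(\theta_0)\preceq\hat{\mathcal{Q}}_n(\theta)\preceq\tfrac43\hat{\mathcal{Q}}_n(\theta_0)$ from~\ref{eq:Event}, and for~\eqref{eq:Part2ConvexMEst} uses a mean-value Taylor expansion in place of your integral form---the two are interchangeable here. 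The only substantive difference is that the paper asserts directly that $\|I-[\hat{\mathcal{Q}}_n(\theta_0)]^{-1}\hat{\mathcal{Q}}_n(\theta)\|_{op}$ equals the supremum of quadratic-form ratios and works in the Euclidean norm throughout, whereas you route through the Mahalanobis norm to make that step rigorous; be aware, though, that if you switch to $\|\cdot\|_N$ then the ball in~\eqref{assump:Contraction}, the quantity in~\eqref{assump:FixedPoint}, and the conclusion of Theorem~\ref{thm:NonSingular} all become $N$-norm statements, so you would still need a final conversion back to $\|\cdot\|_2$ to recover~\eqref{eq:Part1ConvexMEst} as written.
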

\begin{proof}
See Appendix~\ref{appsec:GLM} for a proof.
\end{proof}
\begin{rem}\,(Discussion on the Assumptions)
It is easy to see that assumption~\ref{eq:Event} implies assumption~\ref{eq:ConvexityL} since otherwise the event $\mathcal{E}_n$ cannot hold. Also, from the proof of Theorem~\ref{thm:ConvexMEst}, it follows that the definition of $C(\cdot, \cdot)$ can be replaced by
\[
C(u, w) := \sup_{\norm{\theta - \theta_0} \le u}\,\sup_{e\in\mathbb{R}^q:\,\norm{e}_2 = 1}\max\left\{\frac{e^{\top}\nabla_2 L(\theta, w)e}{e^{\top}\nabla_2 L(\theta_0, w)e}, \frac{e^{\top}\nabla_2 L(\theta_0, w)e}{e^{\top}\nabla_2 L(\theta, w)e}\right\}.
\]
The only difference is that we restrict to $\theta$-vectors that are close to the target $\theta_0$. %Since $\theta_0$ is allowed to be an arbitrary vector, it is not clear if this relaxed definition increases the scope of the result. 

The main reason behind the deterministic inequality is that the objective function can be both upper and lower bounded by (different) quadratic functions. Similar results under additive (rather than a ratio-type) assumption can be found in~\citet[Corollary 3.4]{spokoiny2012parametric}.
\end{rem}
\begin{rem}\,(Linear Representation)
The quantity $C(u, w)$ relates to continuity of the function $\nabla_2L(\cdot, w)$ and usually converges to 1 as $u\to 0$. If this convergence holds, then from Theorem~\ref{thm:ConvexMEst} it follows that as long as $\delta_n(\theta_0) \to 0$,
\[
\hat{\theta}_n - \theta_0~\approx~ [\hat{\mathcal{Q}}_n(\theta_0)]^{-1}\hat{\mathcal{Z}}_n(\theta_0).
\]
The classical proof of asymptotic normality of estimator $\hat{\theta}_n$ obtains an average on the right hand side and the above quantity is not an average because of $\hat{\mathcal{Q}}_n(\theta_0)$. It is easy to replace the average $\hat{\mathcal{Q}}_n(\theta_0)$ by its expectation as follows. Note that
\begin{align*}
&\norm{[\hat{\mathcal{Q}}_n(\theta_0)]^{-1}\hat{\mathcal{Z}}_n(\theta_0) - [{\mathcal{Q}}_n(\theta_0)]^{-1}\hat{\mathcal{Z}}_n(\theta_0)}_2\\
&\qquad\le \|\left[{\mathcal{Q}}_n(\theta_0)\right]^{-1}\hat{\mathcal{Q}}_n(\theta_0) - I\|_{op}\norm{[\hat{\mathcal{Q}}_n(\theta_0)]^{-1}\hat{\mathcal{Z}}_n(\theta_0)}_2\\
&\qquad\le {\|\left[{\mathcal{Q}}_n(\theta_0)\right]^{-1}\hat{\mathcal{Q}}_n(\theta_0) - I\|_{op}\delta_n(\theta_0)}.
\end{align*}
Therefore,
\begin{align}
&\norm{\hat{\theta}_n - \theta_0 + \left[\mathcal{Q}_n(\theta_0)\right]^{-1}\hat{\mathcal{Z}}_n(\theta_0)}_2\nonumber\\
&\qquad \le \left[\max_{1\le i\le n}\,C(\delta_n(\theta_0), W_i) - 1 + \|\left[{\mathcal{Q}}_n(\theta_0)\right]^{-1}\hat{\mathcal{Q}}_n(\theta_0) - I\|_{op}\right]\delta_n(\theta_0).\label{eq:Second2Bound}
\end{align}
In the steps above, it is irrelevant what $\mathcal{Q}_n(\theta_0)$ is but a classical choice is given by
\[
\mathcal{Q}_n(\theta_0) := \frac{1}{n}\sum_{i=1}^n \mathbb{E}\left[\nabla_2L(\theta; W_i)\right].
\]
Finally, if the coefficient of $\delta_n(\theta_0)$ in~\eqref{eq:Second2Bound} is $o_p(1)$, then inequality~\eqref{eq:Second2Bound} proves that
\[
\|\hat{\theta}_n - \theta_0\|_2 = \left(1 + o_p(1)\right)\norm{[\mathcal{Q}_n(\theta_0)]^{-1}\hat{\mathcal{Z}}_n(\theta_0)}_2 = (1 + o_p(1))\frac{2\delta_n(\theta_0)}{3}.
\]
\end{rem}

An application of Theorem~\ref{thm:ConvexMEst} for asymptotic normality of $M$-estimators under a specific dependence structure can be completed using the steps below.
\begin{enumerate}
	\item Define the target $\theta_0$ as a solution to the equation $\mathcal{Z}_n(\theta) = 0$, where
	\[
	\mathcal{Z}_n(\theta) := \frac{1}{n}\sum_{i=1}^n \mathbb{E}\left[\nabla L(\theta; W_i)\right]\in\mathbb{R}^q.
	\]
	This choice of $\theta_0$ ensures that $\mathbb{E}[\hat{\mathcal{Z}}_n(\theta_0)] = 0$ and so, $\hat{\mathcal{Z}}_n(\theta_0)$ becomes a mean zero average. 
	\item Prove that $\|\hat{\mathcal{Z}}_n(\theta_0)\|_2 = o_p(1)$ under the assumed dependence structure. Controlling the Euclidean norm can be based on the following inequality:
  \begin{equation}\label{eq:FiniteMaximumL2}
  \|\hat{\mathcal{Z}}_n(\theta_0)\|_2 = \sup_{\norm{\nu}_2 \le 1}\,\nu^{\top}\hat{\mathcal{Z}}_n(\theta_0) \le 2\max_{\nu\in\mathcal{N}_{1/2}}\,\nu^{\top}\hat{\mathcal{Z}}_n(\theta_0),
  \end{equation}
  where $\mathcal{N}_{1/2}\subset B(0, 1)$ denotes the $1/2$-covering set of $B(0, 1)$, that is,
  \[
  \sup_{\nu\in B(0, 1)}\inf_{\mu\in \mathcal{N}_{1/2}}\,\norm{\nu - \mu}_2 \le 1/2.
  \]
  From Lemma 4.1 of \cite{Pollard90}, it follows that the cardinality of $\mathcal{N}_{1/2}$, $|\mathcal{N}_{1/2}|$, is bounded by $6^q$. Note that inequality~\eqref{eq:FiniteMaximumL2} is sharp up to the factor of $2$. This inequality shows that the tail bounds on $\nu^{\top}\hat{\mathcal{Z}}_n(\theta_0)$ can be used to control $\|\hat{\mathcal{Z}}_n(\theta_0)\|_2$. 
	\item Prove that
	\[
	\norm{\hat{\mathcal{Q}}_n(\theta_0) - \mathcal{Q}_n(\theta_0)}_{op} = o_p(1).
	\]
	This would imply if $\mathcal{Q}_n(\theta_0)$ is positive definite then $\hat{\mathcal{Q}}_n(\theta_0)$ is also positive definite for sufficiently large $n$. Similar to the Euclidean norm, the operator norm can also be bounded in terms of a finite maximum. By Lemma 2.2 of~\cite{Ver12}, it follows that
  \[
  \|\hat{\mathcal{Q}}_n(\theta_0) - \mathcal{Q}_n(\theta_0)\|_{op} \le 2\max_{\nu\in\mathcal{N}_{1/4}}\,\left|\nu^{\top}\hat{\mathcal{Q}}_n(\theta_0)\nu - \nu^{\top}\mathcal{Q}_n(\theta_0)\nu\right|,
  \]
  where again $\mathcal{N}_{1/4}\subset B(0, 1)$ represents the $1/4$-covering number of $B(0, 1)$ and by Lemma 4.1 of~\cite{Pollard90}, $|\mathcal{N}_{1/4}| \le 12^q$.
\end{enumerate}
The quantities $\nu^{\top}\hat{\mathcal{Z}}_n(\theta_0)$ and $\nu^{\top}\hat{\mathcal{Q}}_n(\theta_0)\nu$ being averages are much easier to study under various dependence settings of interest. Exponential-type tail bounds for averages under independence and functional dependence are given in Theorems A.1 and B.1, respectively, of \cite{Uniform:Kuch18}.

Assumption~\ref{eq:Event} is used in the proof of Theorem~\ref{thm:ConvexMEst} only to prove condition~\eqref{assump:Contraction} in Theorem~\ref{thm:NonSingular}. So, any alternative condition implying~\eqref{assump:Contraction} can be used instead. The assumption on the ratio rather than the difference of Hessians is more appealing since minimizers do not depend on the scaling of objective functions. The function $C(\cdot, \cdot)$ naturally cancels out the scalings and requires much weaker conditions as discussed in Section~\ref{sec:Comparison}. 

The function $C(\cdot, \cdot)$ can be bounded easily for self-concordant type convex functions. Proposition 1 of \cite{bach2010self} bounds $C(\cdot, \cdot)$ for logistic regression and see Proposition 8 of \cite{Sun2017} for a general class of convex functions called generalized self-concordant where the ratio of the Hessians is bounded. Also, see~\cite{karimireddy2018global} for other examples.

One specific corollary of Theorem~\ref{thm:ConvexMEst} in regression analysis is of special interest for our applications. For this result, consider independent random variables $(X_i, Y_i)\in\mathbb{R}^p\times\mathbb{R}$ $(1 \le i\le n)$ and the estimator
\[
\hat{\beta}_n := \argmin_{\theta\in\mathbb{R}^p}\,\frac{1}{n}\sum_{i=1}^n h(X_i)\ell(X_i^{\top}\theta, Y_i),
\]
for some loss function $\ell(\cdot, \cdot)$ convex and twice differentiable in the first argument. Here the ``weight'' $h(\cdot)$ is any function not depending on $\theta$. Observe that if $h(\cdot)$ is not a non-negative function, the objective function is not necessarily convex. Define the target vector
\[
\beta_n := \argmin_{\theta\in\mathbb{R}^p}\,\frac{1}{n}\sum_{i=1}^n \mathbb{E}\left[h(X_i)\ell(X_i^{\top}\theta, Y_i)\right].
\]
The function $h(x)\ell(x^{\top}\theta, y)$ can be changed to any function of the form $\ell(x^{\top}\theta; x, y)$ but for simplicity we restrict to the function above. Let
\[
\ell'(u, y) := \frac{\partial}{\partial t}\ell(t, y)\bigg|_{t = u}\quad\mbox{and}\quad \ell''(u, y) := \frac{\partial}{\partial t}\ell'(t, y)\bigg|_{t = u}.
\]
Define the analogue of the $C$ function,
\[
C(u, y) := \sup_{|s - t| \le u}\,\frac{\ell''(s, y)}{\ell''(t, y)}.
\]
Finally, define the analogues of $\hat{\mathcal{Z}}_n(\cdot),\hat{\mathcal{Q}}_n(\cdot)$,
\begin{align*}
\hat{\mathcal{Z}}_n(\theta) := \frac{1}{n}\sum_{i=1}^n \ell'(X_i^{\top}\theta, Y_i)h(X_i)X_i,\quad&\mbox{and}\quad \hat{\mathcal{Q}}_n(\theta) := \frac{1}{n}\sum_{i=1}^n \ell''(X_i^{\top}\theta, Y_i)h(X_i)X_iX_i^{\top},\\
\delta_{n}(\theta) := \frac{3}{2}\norm{[\hat{\mathcal{Q}}_n(\theta)]^{-1}\hat{\mathcal{Z}}_n(\theta)}_2,\quad&\mbox{and}\quad\mathcal{Q}_n(\theta) := \frac{1}{n}\sum_{i=1}^n \mathbb{E}\left[\ell''(X_i^{\top}\theta, Y_i)h(X_i)X_iX_i^{\top}\right].
\end{align*}
\begin{cor}\label{cor:RegressionEst}
If $\ell(\cdot, \cdot)$ is a twice differentiable function that is convex in the first argument and for some $\beta_0\in\mathbb{R}^p$,
\begin{equation}\label{eq:AssumptionA2}
\max_{1\le i\le n}\,C\left(\norm{X_i}_2\delta_n(\beta_0), Y_i\right) \le \frac{4}{3},
\end{equation}
then there exists a vector $\hat{\beta}_n\in\mathbb{R}^p$ satisfying
\[
\hat{\mathcal{Z}}_n(\hat{\beta}_n) = 0,\qquad \frac{1}{2}\delta_n(\beta_0) \le \|{\hat{\beta}_n - \beta_0}\|_2 \le \delta_n(\beta_0),
\]
and
\begin{align*}
&\norm{\hat{\beta}_n - \beta_0 + [{\mathcal{Q}}_n(\beta_0)]^{-1}\hat{\mathcal{Z}}_n(\beta_0)}_2\\
&\qquad\le \left[\max_{1\le i\le n}\,C(\norm{X_i}_2\delta_n(\beta_0), Y_i) - 1 + \norm{[\mathcal{Q}_n(\beta_0)]^{-1}\hat{\mathcal{Q}}_n(\beta_0) - I}_{op}\right]\delta_n(\beta_0).
\end{align*}
\end{cor}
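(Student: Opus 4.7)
The plan is to derive the corollary as a direct specialization of Theorem~\ref{thm:ConvexMEst} to the per-observation loss $L(\theta; W_i) := h(X_i)\ell(X_i^\top \theta, Y_i)$ with $W_i = (X_i, Y_i)$, followed by a short algebraic replacement of $\hat{\mathcal{Q}}_n(\beta_0)$ by $\mathcal{Q}_n(\beta_0)$ as in the Linear Representation remark. First, by the chain rule
\[
\nabla L(\theta; W_i) = h(X_i)\ell'(X_i^\top\theta, Y_i)X_i, \qquad \nabla_2 L(\theta; W_i) = h(X_i)\ell''(X_i^\top\theta, Y_i)X_iX_i^\top,
\]
so the averages $\hat{\mathcal{Z}}_n(\theta)$ and $\hat{\mathcal{Q}}_n(\theta)$ defined just before the corollary coincide verbatim with those from Theorem~\ref{thm:ConvexMEst}, as does $\delta_n(\theta)$.

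The key observation is that the $C$-function of Theorem~\ref{thm:ConvexMEst} collapses dramatically here: because $\nabla_2 L(\theta; W_i)$ is rank-one in the fixed direction $X_i$, for every unit vector $e\in\mathbb{R}^p$ the scalar $h(X_i)$ and the nonnegative factor $(X_i^\top e)^2$ cancel between numerator and denominator of the ratio, giving
\[
\frac{e^\top \nabla_2 L(\theta_1, W_i) e}{e^\top \nabla_2 L(\theta_2, W_i) e} = \frac{\ell''(X_i^\top \theta_1, Y_i)}{\ell''(X_i^\top \theta_2, Y_i)}.
\]
Combined with the Cauchy-Schwarz bound $|X_i^\top(\theta_1 - \theta_2)| \le \|X_i\|_2\|\theta_1 - \theta_2\|_2$, this shows that the $C$-function of Theorem~\ref{thm:ConvexMEst} evaluated at $(u, W_i)$ is bounded above by the scalar $C(\|X_i\|_2 u, Y_i)$ defined just before the corollary. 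Hence condition~\eqref{eq:AssumptionA2} implies assumption~\ref{eq:Event} at $\theta_0 = \beta_0$, and Theorem~\ref{thm:ConvexMEst} applied to $L$ delivers the existence of $\hat{\beta}_n$ with the two-sided bound on $\|\hat{\beta}_n - \beta_0\|_2$ and the linearization~\eqref{eq:Part2ConvexMEst} involving $\hat{\mathcal{Q}}_n(\beta_0)^{-1}$.

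To upgrade $\hat{\mathcal{Q}}_n(\beta_0)^{-1}$ to $\mathcal{Q}_n(\beta_0)^{-1}$, I would reuse the identity from the Linear Representation remark, namely
\[
[\mathcal{Q}_n(\beta_0)]^{-1}\hat{\mathcal{Z}}_n(\beta_0) - [\hat{\mathcal{Q}}_n(\beta_0)]^{-1}\hat{\mathcal{Z}}_n(\beta_0) = \bigl([\mathcal{Q}_n(\beta_0)]^{-1}\hat{\mathcal{Q}}_n(\beta_0) - I\bigr)[\hat{\mathcal{Q}}_n(\beta_0)]^{-1}\hat{\mathcal{Z}}_n(\beta_0),
\]
take Euclidean norms, bound the right-hand side by $\|\mathcal{Q}_n(\beta_0)^{-1}\hat{\mathcal{Q}}_n(\beta_0) - I\|_{op}\cdot\tfrac{2}{3}\delta_n(\beta_0)$, and combine with~\eqref{eq:Part2ConvexMEst} via the triangle inequality, absorbing the $\tfrac{2}{3}$ into $\delta_n(\beta_0)$ to produce the second display in the statement.

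The main obstacle I anticipate is that assumption~\ref{eq:ConvexityL} asks for per-observation convexity of $L(\theta; W_i)$, which fails when $h(X_i) < 0$; since the corollary imposes no sign restriction on $h$, one cannot always appeal to Theorem~\ref{thm:ConvexMEst} directly. In that case I would bypass Theorem~\ref{thm:ConvexMEst} and apply Theorem~\ref{thm:NonSingular} to $f = \hat{\mathcal{Z}}_n$ with $A = \hat{\mathcal{Q}}_n(\beta_0)$ in the $\hat{\mathcal{Q}}_n(\beta_0)^{1/2}$-weighted norm: the Hessian-ratio identity above averages to the contraction bound~\eqref{assump:Contraction} with $\varepsilon = 1/3$, and~\eqref{assump:FixedPoint} with $r = \delta_n(\beta_0)$ is immediate from the definition $\delta_n(\beta_0) = \tfrac{3}{2}\|\hat{\mathcal{Q}}_n(\beta_0)^{-1}\hat{\mathcal{Z}}_n(\beta_0)\|_2$. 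The Newton-type expansion~\eqref{eq:Part2ConvexMEst} is then recovered by the same super-linearity argument underlying Theorem~\ref{thm:NewtonStepDifferentiable}, after which the final step above is unchanged.
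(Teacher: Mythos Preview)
Your proposal is correct and follows exactly the paper's approach: specialize Theorem~\ref{thm:ConvexMEst} with $L(\theta;W_i)=h(X_i)\ell(X_i^\top\theta,Y_i)$, observe that the rank-one Hessian makes the general $C$-function collapse to the scalar $C(\|X_i\|_2 u,Y_i)$, and then swap $\hat{\mathcal{Q}}_n(\beta_0)$ for $\mathcal{Q}_n(\beta_0)$ via the Linear Representation remark. Your additional remark about the sign of $h$ is a genuine observation---the paper notes just before the corollary that negative $h$ destroys per-observation convexity but its proof silently applies Theorem~\ref{thm:ConvexMEst} anyway---so your fallback to Theorem~\ref{thm:NonSingular} is a reasonable patch that the paper does not make explicit.
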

\begin{proof} See Appendix~\ref{appsec:GLM} for a proof.
\end{proof}
\begin{exam}[Linear Models]
\emph{In the following, we bound the function $C$ in case of several linear models. Since Corollary~\ref{cor:RegressionEst} does not require any specific stochastic or model assumptions, the following examples also do not require any ``correct'' modeling assumptions and are deterministic in nature.}
\begin{enumerate}
  \item Linear Regression: \emph{In case of ordinary least squares (OLS) linear regression, the loss function is given by $\ell(t, y) = (t - y)^2$ and the weight function is identically 1. So, $\ell''(u, y) = 2$ and $C(u, y) = 1$ for all $u, y$. This implies that the assumption~\ref{eq:Event} always holds. This is an expected result since the least square estimator satisfies}
  \[
  \frac{1}{n}\sum_{i=1}^n X_i\left(Y_i - X_i^{\top}\hat{\beta}_n\right) = 0,
  \]
  \emph{and subtracting $\beta_0$ from $\hat{\beta}_n$ implies that}
  \[
  \left(\frac{1}{n}\sum_{i=1}^n X_iX_i^{\top}\right)\left(\hat{\beta}_n - \beta_0\right) = \frac{1}{n}\sum_{i=1}^n X_i(Y_i - X_i^{\top}\beta_0).
  \]
  \emph{Here $\beta_0$ is the target OLS vector defined by}
  \[
  \beta_0 := \argmin_{\theta\in\mathbb{R}^p}\,\frac{1}{n}\sum_{i=1}^n \mathbb{E}\left[(Y_i - X_i^{\top}\theta)^2\right].
  \]
  \emph{This proves that}
  \[
  \|\hat{\beta}_n - \beta_0\|_2 = \frac{2\delta_n(\beta_0)}{3}.
  \]
  \emph{The second conclusion of Corollary~\ref{cor:RegressionEst} provides better information:}
  \[
  \norm{\hat{\beta}_0 - \beta_0 - \frac{1}{n}\sum_{i=1}^n \Sigma_n^{-1}X_i(Y_i - X_i^{\top}\beta_0)}_2 \le \norm{\Sigma_n^{-1}\hat{\Sigma}_n - I}_{op}\delta_n(\beta_0),
  \]
  \emph{where}
  \[
  \hat{\Sigma}_n := \frac{1}{n}\sum_{i=1}^n X_iX_i^{\top},\quad\mbox{and}\quad \Sigma_n := \frac{1}{n}\sum_{i=1}^n \mathbb{E}\left[X_iX_i^{\top}\right].
  \]
  \emph{Details on how to bound $\delta_n(\beta_0)$ in case of independent/functionally dependent data were provided in \cite{Uniform:Kuch18}.}
  \item Poisson Regression: \emph{In case of Poisson regression, the loss function is $\ell(t, y) = \exp(t) - yt$ and the weight function is identically 1. So, $\ell''(t, y) = \exp(t)$. This implies that $C(u, y) = \exp(u)$. The event~\eqref{eq:AssumptionA2} is equivalent to}
  \[
  \max_{1\le i\le n}\,\norm{X_i}_2\delta_n(\beta_0) \le \log\left(4/3\right).
  \]
  \emph{On this event,}
  \[
  \max_{1\le i\le n}C\left(\norm{X_i}_2\delta_n(\beta_0)\right) - 1 \le \frac{4\delta_n(\beta_0)}{3}\max_{1\le i\le n}\norm{X_i}_2.
  \]
  \emph{Thus, Corollary~\ref{cor:RegressionEst} implies that there exists $\hat{\beta}_n\in\mathbb{R}^p$ such that}
  \begin{align*}
  &\norm{\hat{\beta}_n - \beta_0 - \frac{1}{n}\sum_{i=1}^n [\mathcal{Q}_n(\beta_0)]^{-1}X_i\left[Y_i - \exp(X_i^{\top}\beta_0)\right]}_2\\ &\qquad\le \left[\frac{4\delta_n(\beta_0)}{3}\max_{1\le i\le n}\norm{X_i}_2 + \norm{[\mathcal{Q}_n(\beta_0)]^{-1}\hat{\mathcal{Q}}_n(\beta_0) - I}_{op}\right]\delta_n(\beta_0),
  \end{align*}
  \emph{where}
  \[
  \hat{\mathcal{Q}}_n(\beta) := \frac{1}{n}\sum_{i=1}^n X_iX_i^{\top}\exp\left(X_i^{\top}\beta\right),\quad\mbox{and}\quad \mathcal{Q}_n(\beta) = \mathbb{E}\left[\hat{\mathcal{Q}}_n(\beta)\right].
  \]
  \emph{Since $\hat{\mathcal{Q}}_n(\beta_0)$ is a Gram matrix based on random vectors $X_i\exp(X_i^{\top}\beta_0/2), 1\le i\le n$, the results of \cite{Uniform:Kuch18} can still be applied to show that $\hat{\mathcal{Q}}_n(\beta_0)$ is close to $\mathcal{Q}_n(\beta_0)$.}
  \item Logistic and Negative Binomial Regression: \emph{In case of logistic regression, the loss function is given by}
  \[
  \ell(u, y) = \log(1 + \exp(u)) - yu,
  \]
  \emph{and the weight function is identically 1. It is easy to show that}
  \[
  \ell''(u, y) = \frac{\exp(u)}{(1 + \exp(u))^2},\quad\mbox{and}\quad C(u, y) = \sup_{|s - t| \le u}\,\frac{\exp(s)(1 + \exp(t))^2}{(\exp(s) + 1)^2\exp(t)}.
  \] 
  \emph{Since $\exp(s) \le \exp(u)\exp(t)$ for all $s, t$ satisfying $|s - t| \le u$, it follows that}
  \[
  C(u, y) \le \sup_{|s - t| \le u}\frac{\exp(s)}{\exp(t)}\sup_{|s - t| \le u}\frac{(\exp(s) + 1)^2}{(\exp(t) + 1)^2} \le \exp(3u).
  \]
  \emph{For the case of negative binomial regression (with parameter $\alpha > 0$), the loss function is}
  \[
  \ell(u, y) = -yu + [y + 1/\alpha]\log(1 + \alpha\exp(u)),
  \]
  \emph{and the weight function is identically 1. So,}
  \[
  \ell''(u, y) = \frac{\alpha[y + 1/\alpha]\exp(u)}{(\alpha\exp(u) + 1)^2},\;\mbox{and}\;C(u, y) = \sup_{|s - t| \le u}\frac{\exp(s)(\alpha\exp(t) + 1)^2}{(\alpha\exp(s) + 1)^2\exp(t)}.
  \]
  \emph{Similar to the logistic regression case, we get $C(u, y) \le \exp(3u).$ Therefore, condition~\eqref{eq:AssumptionA2} becomes}
  \[
  \max_{1\le i\le n}\norm{X_i}_2\delta_n(\beta_0) \le \frac{\log(4/3)}{3}.
  \]
  \emph{Hence calculations similar to the Poisson regression case still hold true.}
\end{enumerate}
\end{exam}
In the examples above, we have controlled the function $C(u, y)$ for some widely used convex examples. When the loss function $\ell(\cdot, y)$ is strongly convex, then $C(u, y) - 1$ can be bounded by $\mathfrak{C}\sup\{|\ell''(s, y) - \ell''(t, y)|:\,|s - t|\le u\}$ for some constant $\mathfrak{C} > 0$. It should, however, be noted that $C(u, y)$ may not be a bounded function even if the function $\ell(\cdot, y)$ is strictly convex. A possible example is probit regression. In this case the approch used in Section~\ref{sec:NonConvex} works easily.
\subsection{Comparison with assumptions in the literature}\label{sec:Comparison}
Results similar to Corollary~\ref{cor:RegressionEst} were presented in \citet[Theorem 1]{Li2017}, \citet[Theorem 1]{Liang2012}, \citet[Corollary 3]{Neg09} and \citet[Example 3]{He2000}. In these papers the authors assume a lower bound on the second order curvature, that is,
\[
\inf_{1\le i\le n}\inf_{\norm{\theta - \beta_0}_2 \le \varepsilon} \ell''(Y_i, X_i^{\top}\theta) \ge \kappa > 0\quad\mbox{for some (small enough)}\quad \varepsilon > 0.
\]
This is a difficult assumption to be satisfied in case of increasing dimension since $\ell''(y,u)$ converges to zero as $u \to -\infty$ usually and often $|X_i^{\top}\beta_0|$ itself grows with the dimension. This hurdle poses certain unnecessary rate constraints on the dimension. In contrast our assumption is based on difference meaning $X_i^{\top}(\theta - \beta_0)$ which can be expected to be small as long as $\norm{\theta - \beta_0}_2$ is small even with increasing dimension. See the discussion surrounding equation (1.8) and Theorem 2.4 of \cite{Bose03} for related ratio-type assumptions.

It is clear from Corollary~\ref{cor:RegressionEst} that the function $C(u, y)$ plays a very important role in the existence and determining the rate of convergence of the estimator. The following proposition (proved in Appendix~\ref{appsec:NonSingular}) allow construction of new loss functions with a control on the $C(\cdot, \cdot)$ function.
\begin{prop}\label{prop:Convexity}
Suppose $\mathcal{C}_T$ (indexed by a non-negative function $T(\cdot, \cdot)$) is the class of all loss functions $L(\cdot, \cdot)$ convex in the first argument and satisfying
\[
\sup_{\norm{\theta_1 - \theta_2} \le u}\,\sup_{e\in\mathbb{R}^q:\,\norm{e}_2 = 1}\frac{e^{\top}\nabla_2 L(\theta_1, w)e}{e^{\top}\nabla_2 L(\theta_2, w)e} \le T(u, w)\quad\mbox{for all}\; u \ge 0\;\mbox{and}\; w.
\]
Then $\mathcal{C}_T$ is a convex cone.
\end{prop}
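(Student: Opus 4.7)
To show that $\mathcal{C}_T$ is a convex cone, I would verify the two standard closure properties: (i) closure under nonnegative scalar multiplication, and (ii) closure under addition.

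\textbf{Scaling.} For $\lambda \ge 0$ and $L \in \mathcal{C}_T$, the function $\lambda L$ is convex in its first argument and has Hessian $\lambda \nabla_2 L$. When $\lambda > 0$, the $\lambda$ cancels in both numerator and denominator of the Hessian ratio, so the sup is identical to that for $L$ and is bounded by $T(u,w)$. For $\lambda = 0$, the Hessian is identically zero, and the bound holds trivially (the ratio is $0/0$, and one may consistently set it equal to $1$, which is dominated by $T(u,w)$ since $T(u,w) \ge 1$ as seen by taking $\theta_1 = \theta_2$).

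\textbf{Addition.} Suppose $L_1, L_2 \in \mathcal{C}_T$ and set $L = L_1 + L_2$. Clearly $L$ is convex and twice differentiable in its first argument, with $\nabla_2 L = \nabla_2 L_1 + \nabla_2 L_2$. Fix arbitrary $\theta_1, \theta_2$ with $\|\theta_1 - \theta_2\|_2 \le u$ and $e$ with $\|e\|_2 = 1$, and write
\[
a_j := e^{\top}\nabla_2 L_j(\theta_1, w) e,\qquad b_j := e^{\top}\nabla_2 L_j(\theta_2, w) e,\qquad j = 1, 2.
\]
By hypothesis $a_j \le T(u,w)\, b_j$ for $j = 1,2$ (in the strictly convex case $b_j > 0$; the degenerate case is handled as in the scaling step). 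Summing these two inequalities gives $a_1 + a_2 \le T(u,w)(b_1+b_2)$, which is exactly the mediant inequality. Dividing by $b_1 + b_2$ yields
\[
\frac{e^{\top}\nabla_2 L(\theta_1,w) e}{e^{\top}\nabla_2 L(\theta_2,w) e} = \frac{a_1 + a_2}{b_1 + b_2} \le T(u,w).
\]
Taking the supremum over admissible $\theta_1, \theta_2, e$ shows $L \in \mathcal{C}_T$.

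The argument is almost entirely a one-line application of the mediant inequality $(a_1+a_2)/(b_1+b_2) \le \max\{a_1/b_1, a_2/b_2\}$; the only minor obstacle is the bookkeeping for degenerate Hessians, which is dealt with by noting that $T(u,w) \ge 1$ automatically.
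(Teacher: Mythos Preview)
Your proof is correct and follows essentially the same approach as the paper: both arguments reduce to the mediant inequality $(a_1+a_2)/(b_1+b_2) \le \max\{a_1/b_1, a_2/b_2\}$ applied to the Hessian quadratic forms. The paper treats a positive combination $\alpha L_1 + \beta L_2$ in one step rather than separating scaling and addition, and does not explicitly address the degenerate-Hessian edge case, but the core idea is identical.
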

% See Appendix~\ref{appsec:NonSingular} for a proof.
% \begin{prop}\label{prop:SoftMaxControl}
% Suppose $\mathcal{C}_T$ be the collection as in Proposition~\ref{prop:Convexity}. For $M\ge 1$, consider the loss functions $L_1(\cdot, \cdot), L_2(\cdot, \cdot), \ldots, L_M(\cdot)\in\mathcal{C}_T$. Then for weights $\alpha_i \ge 0$ satisfying $\sum_{i=1}^M \alpha_i = 1$, the function
% \[
% K_{\alpha}(\theta, w) := \log\left(\sum_{i=1}^M \alpha_i\exp(L_i(\theta, w))\right)\in\mathcal{C}_T. 
% \]
% \end{prop}
\section{Deterministic Inequality for Cox Proportional Hazards Model}\label{sec:Cox}
One of the most widely used models in survival analysis is the celebrated Cox proportional hazards model. The partial log-likelihood of the Cox model even though not an average can be dealt using our theory. The analysis in this section is related to the discussion in Section 6 of \cite{HjortPollard}. The usual Cox regression model for possibly censored lifetimes with covariate information is as follows: The individuals have independent lifetimes $T_1^0, \ldots, T_n^0$ and the $i$-th subject has hazard rate 
\begin{equation}\label{eq:CorrectCox}
\lambda_i(s) := \lambda(s)\exp\left(\beta^{\top}_0X_{i,s}\right),
\end{equation}
for some vector $\beta_0$, some baseline hazard function $\lambda(\cdot)$ and $i$-th subject covariate $X_{i,s}\in\mathbb{R}^p$. The classical Cox model has a fixed set of covariates not depending on time $s$ and here they are allowed to depend on time. There is a possibly interfering censoring time $C_i$ leaving the observables to be
\[
T_i = \min\{T_i^0, C_i\}\quad\mbox{and}\quad\delta_i = \mathbbm{1}\{T_i^0 \le C_i\}.
\]
Consider the risk indicator function $Y_{i,s} = \mathbbm{1}\{T_i \ge s\}$, and the counting process $N_i$ with mass $\delta_i$ at $T_i$, that is,
\[
dN_i(s) := \mathbbm{1}\{T_i \in [s, s+ds], \delta_i = 1\}.
\]
The log-partial likelihood is then given by
\[
G_n(\beta) := \sum_{i = 1}^n \int_0^{\infty} \left\{\beta^{\top}X_{i,s} - \log R_n(s, \beta)\right\}dN_i(s),
\]
where
\[
R_n(s, \beta) := \sum_{i = 1}^n Y_i(s)\exp\left(\beta^{\top}X_{i,s}\right).
\]
The Cox estimator is the value $\hat{\beta}_n$ that maximizes the log-partial likelihood. Even though the motivation above is through a correct model~\eqref{eq:CorrectCox}, we do not make any such assumptions and prove a purely deterministic result. %For this result, we need the following lemma proved in Appendix~\ref{appsec:Cox}.
Define for $\beta\in\mathbb{R}^{p}$,
\[
\hat{\mathcal{L}}_{n}(\beta) := \sum_{i=1}^n \int_0^{\infty} H_{1}(X_{i,s})\left\{\log R_{n}(s, \beta) - \beta^{\top}X_{i,s}\right\}dN_i(s),
\]
where
\[
R_{n}(s, \beta) := \sum_{i=1}^n H_{2}(X_{i,s})Y_i(s)\exp\left(\beta^{\top}X_{i,s}\right).
\]
The objective function $\hat{\mathcal{L}}_{n}(\cdot)$ is a generalization of $G_n(\cdot)$ allowing for two functions $H_{1}(\cdot)$ and $H_{2}(\cdot)$ that can be used to down-weight outliers in the covariate space. Note that this generalization does not change the convexity property of the objective function. The Cox estimator based on $\hat{\mathcal{L}}_{n}(\cdot)$ is given by
\[
\hat{\beta}_{n} := \argmin_{\theta\in\mathbb{R}^{p}}\,\hat{\mathcal{L}}_{n}(\theta).
\]
Define for $\beta\in\mathbb{R}^{p}$,
\[
\hat{\mathcal{Z}}_{n}(\beta) := \sum_{i=1}^n \int_0^{\infty} H_{1}(X_{i,s})\left\{\frac{\dot{R}_{n}(s, \beta)}{R_{n}(s, \beta)} - X_{i,s}\right\}dN_i(s),
\]
where
\[
\dot{R}_{n}(s, \beta) := \frac{\partial R_{n}(s, \beta)}{\partial\beta}.
\]
Define the Jacobian as $\hat{\mathcal{Q}}_{n}(\beta) := \nabla \hat{\mathcal{Z}}_{n}(\beta).$ Finally define for any $\beta_0\in\mathbb{R}^p$, 
\[
\bar{X}_{n,s}(\beta_0) := \frac{\sum_{i=1}^n X_{i,s}H_{2}(X_{i,s})Y_i(s)\exp\left(\beta^{\top}_{0}X_{i,s}\right)}{R_{n}(s, \beta_{0})}.
\]
% Note that this is similar to the definition of $\bar{a}(t)$ in Lemma~\ref{lem:SoftMax}.
\begin{thm}\label{thm:CoxModel}
Set for any target vector $\beta_0\in\mathbb{R}^p$,
\begin{align*}
\mu_{n}(s) := \max_{1\le i\le n}\norm{X_{i,s} - \bar{X}_{n,s}(\beta_0)}_2,\quad\mbox{and}\quad\delta_{n}(\beta_0) := \frac{3}{2}\norm{[\hat{\mathcal{Q}}_{n}(\beta_{0})]^{-1}\hat{\mathcal{Z}}_{n}(\beta_{0})}_2.
\end{align*}
If
\begin{equation}\label{assump:CoxRegression}
\sup_{0 \le s < \infty} \mu_{n}(s)\delta_{n}(\beta_0) \le \frac{1}{16},
\end{equation}
then there exists a vector $\hat{\beta}_{n}\in\mathbb{R}^{p}$ satisfying
\begin{equation}\label{eq:CoxConsis}
\hat{\mathcal{Z}}_{n}(\hat{\beta}_{n}) = 0,\quad\mbox{and}\quad \frac{1}{2}\delta_{n}(\beta_0) \le \|\hat{\beta}_{n} - \beta_{0}\|_2 \le \delta_{n}(\beta_0).
\end{equation}
Furthermore, 
\begin{equation}\label{eq:CoxAsympLinear}
\begin{split}
\norm{\hat{\beta}_{n} - \beta_{0} + [\hat{\mathcal{Q}}_{n}(\beta_{0})]^{-1}\hat{\mathcal{Z}}_{n}(\beta_{0})}_2 &~\le~ 8e^{1/4}\delta_{n}^2(\beta_0)\sup_s\mu_{n}(s).
\end{split}
\end{equation}
\end{thm}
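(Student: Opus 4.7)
The plan is to apply Theorem~\ref{thm:NewtonStepDifferentiable} to $f = \hat{\mathcal{Z}}_n$ at $\theta_0 = \beta_0$. Since $\hat{\mathcal{L}}_n$ is convex in $\beta$, its gradient $\hat{\mathcal{Z}}_n$ has symmetric positive semidefinite Jacobian $\hat{\mathcal{Q}}_n$, so the two ingredients required are (i) the smallness condition~\eqref{eq:AssumptionNewton}, which here reads $\|[\hat{\mathcal{Q}}_n(\beta_0)]^{-1}\hat{\mathcal{Z}}_n(\beta_0)\|_2 = 2\delta_n(\beta_0)/3 \le 2/(9L)$, and (ii) a H\"older continuity bound of the form~\eqref{eq:HolderContinuity} with $\alpha = 1$ for $\hat{\mathcal{Q}}_n$. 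The single hypothesis $\sup_s\mu_n(s)\delta_n(\beta_0)\le 1/16$ will yield both, and~\eqref{eq:ExpansionNewton} will specialize to~\eqref{eq:CoxAsympLinear}.

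First I would compute $\hat{\mathcal{Q}}_n$ explicitly. With $w_{j,s}(\beta) := H_2(X_{j,s})Y_j(s)\exp(\beta^\top X_{j,s})$, so that $R_n(s,\beta) = \sum_j w_{j,s}(\beta)$, a direct computation gives
\[
\nabla\Bigl[\frac{\dot R_n(s,\beta)}{R_n(s,\beta)}\Bigr] \;=\; \sum_{j=1}^n \frac{w_{j,s}(\beta)}{R_n(s,\beta)}\bigl(X_{j,s} - \bar X_{n,s}(\beta)\bigr)\bigl(X_{j,s} - \bar X_{n,s}(\beta)\bigr)^\top \;=:\; V_{n,s}(\beta),
\]
the weighted sample covariance of the covariates $\{X_{j,s}\}$. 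Hence $\hat{\mathcal{Q}}_n(\beta) = \sum_i \int_0^\infty H_1(X_{i,s}) V_{n,s}(\beta) dN_i(s)$ is, assuming $H_1\ge 0$, a nonnegative combination of such covariances indexed by the jump times of the counting processes.

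The core of the proof is an operator sandwich $e^{-2\eta}\hat{\mathcal{Q}}_n(\beta_0) \preceq \hat{\mathcal{Q}}_n(\beta) \preceq e^{2\eta}\hat{\mathcal{Q}}_n(\beta_0)$, where $\eta := \|\beta-\beta_0\|_2\sup_s\mu_n(s)$. Two invariances make this tractable. First, $V_{n,s}(\beta)$ is translation-invariant in the data points: replacing each $X_{j,s}$ by $\tilde X_{j,s} := X_{j,s} - \bar X_{n,s}(\beta_0)$ leaves $V_{n,s}(\beta)$ unchanged while forcing $\|\tilde X_{j,s}\|_2 \le \mu_n(s)$ by the definition of $\mu_n$. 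Second, after canceling the $j$-independent factor $\exp((\beta-\beta_0)^\top\bar X_{n,s}(\beta_0))$ from both numerator and denominator of the normalized weights, the effective weight ratio becomes $\exp((\beta-\beta_0)^\top\tilde X_{j,s}) \in [e^{-\eta_s},e^{\eta_s}]$ with $\eta_s := \|\beta-\beta_0\|_2\mu_n(s) \le \eta$. A Rayleigh-quotient comparison of weighted covariances whose weights agree up to a factor $e^{2\eta_s}$ -- combining the min-over-center characterization of weighted variance with the bound $\sum_j c_j w_j \ge e^{-\eta_s}\sum_j w_j$ on the weight normalization -- gives $e^{-2\eta_s}V_{n,s}(\beta_0) \preceq V_{n,s}(\beta) \preceq e^{2\eta_s}V_{n,s}(\beta_0)$, which integrates against the nonnegative measure $\sum_i H_1(X_{i,s}) dN_i(s)$ to yield the sandwich for $\hat{\mathcal{Q}}_n$.

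The sandwich translates to the bound $\|\hat{\mathcal{Q}}_n(\beta_0)^{-1}(\hat{\mathcal{Q}}_n(\beta)-\hat{\mathcal{Q}}_n(\beta_0))\|\le e^{2\eta}-1$ in the energy norm induced by $\hat{\mathcal{Q}}_n(\beta_0)$, which gives a H\"older bound~\eqref{eq:HolderContinuity} with $\alpha=1$ and $L$ proportional to $\sup_s\mu_n(s)$ on the relevant radius. Invoking the Banach-space version of Theorem~\ref{thm:NewtonStepDifferentiable} (indicated in the discussion following its statement) in this norm, the hypothesis $\sup_s\mu_n(s)\delta_n(\beta_0) \le 1/16$ simultaneously verifies~\eqref{eq:HolderContinuity} on the radius of interest and~\eqref{eq:AssumptionNewton}; the theorem then produces both the existence and localization claim~\eqref{eq:CoxConsis} and the Newton expansion~\eqref{eq:ExpansionNewton}, which after tracking constants through the norm conversion specializes to~\eqref{eq:CoxAsympLinear} with the stated constant $8e^{1/4}$. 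The main obstacle is establishing the operator sandwich on weighted covariances; the translation-invariance trick, which reduces the otherwise uncontrolled $\|X_{j,s}\|$ to the hypothesis-controlled $\mu_n(s)$, is the crux, and the remaining steps are a routine bookkeeping exercise in the Newton-Kantorovich framework.
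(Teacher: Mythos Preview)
Your approach is essentially the same as the paper's: establish a multiplicative sandwich on the Hessian of the log-partial-likelihood by controlling the ratio of normalized weights after centering the covariates at $\bar X_{n,s}(\beta_0)$, then feed this into the Banach/Newton framework of Section~\ref{sec:Banach}. A few differences in execution are worth noting.

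The paper separates the sandwich into a preliminary scalar result, Lemma~\ref{lem:SoftMax}, which bounds $|K''(s)/K''(0)-1|\le 4\mu_n|t|e^{4\mu_n|t|}$ for the one-dimensional log-sum-exp $K(t)=\log\sum_i w_i e^{a_it}$, and then applies it direction-by-direction with $a_i=\nu^\top X_{i,s}$. Your matrix-level argument via the min-over-center characterization of weighted variance is more direct and gives the cleaner sandwich $e^{-2\eta_s}V_{n,s}(\beta_0)\preceq V_{n,s}(\beta)\preceq e^{2\eta_s}V_{n,s}(\beta_0)$; both routes encode the same translation-invariance trick. The paper then invokes Theorem~\ref{thm:NonSingular} (not Theorem~\ref{thm:NewtonStepDifferentiable}) for existence, and does the Taylor-expansion step for~\eqref{eq:CoxAsympLinear} by hand, rather than packaging both via Theorem~\ref{thm:NewtonStepDifferentiable}.

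One caution: your detour through the $\hat{\mathcal Q}_n(\beta_0)$-energy norm is unnecessary and complicates the constant tracking, since $\eta$ is expressed in the Euclidean norm of $\beta-\beta_0$ and converting introduces eigenvalue factors of $\hat{\mathcal Q}_n(\beta_0)$ that must later cancel. The paper stays in the Euclidean norm throughout. Also, your sandwich constant $e^{2\eta}-1$ differs from the paper's $4\eta e^{4\eta}$, so the claim that ``tracking constants'' recovers exactly $8e^{1/4}$ is optimistic; you will recover the correct form $C\,\delta_n^2(\beta_0)\sup_s\mu_n(s)$ but with a different (in fact smaller) $C$.
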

\section{Deterministic Inequalities for Non-convex \textit{M}-estimators}\label{sec:NonConvex}
In previous sections we have proved the applicability of Theorem~\ref{thm:NonSingular} for convex loss functions. However, Theorem~\ref{thm:NonSingular} does not require ``monotonicity''\footnote{Derivatives of differentiable one-dimensional convex functions are non-decreasing.} of the function $f(\cdot)$. In this section, we provide one specific non-convex example, namely, non-linear regression.
\subsection{Least Squares Non-linear Regression}
For a motivation of non-linear regression, consider the problem of binary linear classification based on $n$ paris $(X_1, Y_1), \ldots, (X_n, Y_n)$ with $Y_i\in\{0,1\}$ and $X_i\in\mathbb{R}^p$. In this model, the quantity of interest is the conditional probability of $Y_i$ given $X_i$. Suppose $\mathbb{P}(Y_i = 1|X_i = x) = \sigma(x^{\top}\theta_0)$ with $\theta_0\in\mathbb{R}^p$ and a function $\sigma:\mathbb{R}\to[0,1]$. Since this implies $\mathbb{E}[Y_i|X_i = x] = \sigma(x^{\top}\theta_0)$, one possible estimator of $\theta_0$ is obtained by minimizing the squared error loss:
\[
\frac{1}{n}\sum_{i=1}^n \left(Y_i - \sigma(X_i^{\top}\theta)\right)^2,
\]
with respect to $\theta\in\mathbb{R}^p$. It is easy to see that the loss function above is, in general, non-convex. In constrast to convex losses (e.g., hinge or logistic), non-convex loss functions as above have better classification accuracy in various scenarios; see~\cite{nguyen2013algorithms} and~\cite{mei2016landscape}.

As a generalization consider the observations $(X_i, Y_i)\in\mathbb{R}^p\times\mathbb{R}$ for $1\le i\le n$ and the loss function
\[
F_n(\theta) := \frac{1}{n}\sum_{i=1}^n \left(Y_i - g(\theta^{\top}X_i)\right)^2,
\]
for a known function $g(\cdot)$ that is twice differentiable and bounded. (We do not restrict to $Y_i\in\{0, 1\}$.) To prove a deterministic inequality for the stationary points of $F_n(\cdot)$, we use the following assumption:
\begin{description}
  \item[\namedlabel{eq:NonLinear}{(NR)}] The function $g(\cdot)$ is twice differentiable and there exists functions $C_0(\cdot)$, $C_1(\cdot)$, $C_2(\cdot)$ such that for some $\alpha \in (0, 1]$ and any $x, \theta_1, \theta_2$,
  \begin{align*}
  \left|g(x^{\top}\theta_1) - g(x^{\top}\theta_2)\right| &\le C_0(x)\norm{\theta_1 - \theta_2}_2,\\
  \left|g'(x^{\top}\theta_1) - g'(x^{\top}\theta_2)\right| &\le C_1(x)\norm{\theta_1 - \theta_2}_2,\mbox{ and}\\
  \left|g''(x^{\top}\theta_1) - g''(x^{\top}\theta_2)\right| &\le C_2(x)\norm{\theta_1 - \theta_2}_2^{\alpha}.
  \end{align*}
\end{description}

% \begin{description}
% \item[\namedlabel{eq:NonLinear2}{(NR1)}] The function $g(\cdot)$ is twice differentiable and bounded. Furthermore, there exists constants $C_0, C_1, C_2, C_3 \ge 0$ and $\alpha\in(0, 1]$ such that
% \begin{align*}
% \sup_{t}|g(t)| \le C_0,\quad &\sup_{t}|g'(t)| \le C_1,\\\sup_{t}|g''(t)| \le C_2,\quad&\mbox{and}\quad \sup_{t\neq s}\,\frac{|g''(t) - g''(s)|}{|t - s|^{\alpha}} \le C_3.
% \end{align*}
% \end{description}
Assumption~\ref{eq:NonLinear} is satisfied for many classical activation functions with $\alpha = 1$ (for example, logistic function). Another important example satisfying assumption~\ref{eq:NonLinear} is the phase retrieval problem where $g(t) = t^2$; see~\cite{yang2017misspecified} for recent developments. From the proof of Corollary~\ref{cor:NonLinearRegression}, it follows that assumption~\ref{eq:NonLinear} can be relaxed to $\theta_1, \theta_2\in B_r(\theta_0)$ for some $r > 0$.  Define for any $\theta\in\mathbb{R}^p$, $\delta_n(\theta) := 1.5\norm{(\nabla_2F_n(\theta))^{-1}\nabla F_n(\theta)}_2$ and
\begin{align*}
L_2(\theta) &:= \norm{\frac{2}{n}\sum_{i=1}^nC_1^2(X_i)\left(\nabla_2 F_n(\theta)\right)^{-1}X_iX_i^{\top}}_{op},\\
L_{1 + \alpha}(\theta) &:= \norm{\frac{2}{n}\sum_{i=1}^n C_0(X_i)C_2(X_i)\left(\nabla_2 F_n(\theta)\right)^{-1}X_iX_i^{\top}}_{op},\\
L_1(\theta) &:= \norm{\frac{2}{n}\sum_{i=1}^n \left\{2C_1(X_i)|g'(X_i^{\top}\theta)| + C_0(X_i)|g''(X_i^{\top}\theta)|\right\}\left(\nabla_2 F_n(\theta)\right)^{-1}X_iX_i^{\top}}_{op},\\
L_{\alpha}(\theta) &:= \norm{\frac{2}{n}\sum_{i=1}^n C_2(X_i)|Y_i - g(X_i^{\top}\theta)|\left(\nabla_2 F_n(\theta)\right)^{-1}X_iX_i^{\top}}_{op}.
\end{align*}
The following result shows the existence of a solution that satisfies an asymptotic expansion. The proof (in Appendix~\ref{appsec:NonConvex}) verifies the assumptions of Theorem~\ref{thm:NewtonStepDifferentiable}. 
\begin{cor}\label{cor:NonLinearRegression}
Under assumption~\ref{eq:NonLinear}, for any $\theta_0$ satisfying 
\begin{equation}\label{eq:DerivativeSmall}
\delta_n(\theta_0) \le \min\left\{(12L_j(\theta_0))^{-1/j}:\,j\in\{\alpha, 1, 1 + \alpha, 2\}\right\},
\end{equation}
there exists a unique solution $\hat{\theta}_n$ of $\nabla F_n(\theta) = 0$ in $B(\theta_0, \delta_n(\theta_0))$ and this solution $\hat{\theta}_n$ satisfies
\[
\norm{\hat{\theta}_n - \theta_0 + (\nabla_2F_n(\theta_0))^{-1}\nabla F_n(\theta_0)}_2 \le \omega(\delta_n(\theta_0))\delta_n(\theta_0),
\]
where for $r\ge 0,$
\[
\omega(r) := L_2(\theta_0)r^2 + L_{1 + \alpha}(\theta_0)r^{1 + \alpha} + L_1(\theta_0)r + L_{\alpha}(\theta_0)r^{\alpha}.
\]
\end{cor}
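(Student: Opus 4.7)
The plan is to apply the extension of Theorem~\ref{thm:NewtonStepDifferentiable} indicated in the paragraph after its statement, in which the Hölder bound $L\|\theta-\theta_0\|_2^\alpha$ of~\eqref{eq:HolderContinuity} is replaced by an arbitrary non-decreasing majorant $\omega(\|\theta-\theta_0\|_2)$ of $\|[\nabla f(\theta_0)]^{-1}(\nabla f(\theta_0)-\nabla f(\theta))\|_{op}$. With $f=\nabla F_n$, the task is to identify an $\omega$ adapted to the non-linear-regression Hessian and to verify the accompanying Newton smallness condition at $\theta_0$.

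First I would differentiate $F_n$ explicitly, obtaining
\[
\nabla_2F_n(\theta)=\frac{2}{n}\sum_{i=1}^n[g'(X_i^\top\theta)^2-(Y_i-g(X_i^\top\theta))g''(X_i^\top\theta)]X_iX_i^\top,
\]
so that $\nabla_2F_n(\theta)-\nabla_2F_n(\theta_0)=\frac{2}{n}\sum_iA_i(\theta)X_iX_i^\top$. Adding and subtracting mixed terms splits each scalar $A_i(\theta)$ into five pieces, each carrying exactly one of the increments $\Delta_0=g(X_i^\top\theta)-g(X_i^\top\theta_0)$, $\Delta_1=g'(X_i^\top\theta)-g'(X_i^\top\theta_0)$, $\Delta_2=g''(X_i^\top\theta)-g''(X_i^\top\theta_0)$, or the products $\Delta_1^2$ and $\Delta_0\Delta_2$. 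Applying assumption~\ref{eq:NonLinear} to each increment and collecting by power of $r=\|\theta-\theta_0\|_2$ yields the pointwise scalar bound
\[
|A_i(\theta)|\le C_1^2(X_i)r^2+C_0(X_i)C_2(X_i)r^{1+\alpha}+[2C_1(X_i)|g'(X_i^\top\theta_0)|+C_0(X_i)|g''(X_i^\top\theta_0)|]r+C_2(X_i)|Y_i-g(X_i^\top\theta_0)|r^\alpha,
\]
whose four weights are precisely the ones that appear inside $L_2(\theta_0)$, $L_{1+\alpha}(\theta_0)$, $L_1(\theta_0)$ and $L_\alpha(\theta_0)$.

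Next I would convert this pointwise bound into the desired operator-norm bound. Grouping the four powers of $r$ separately and passing $[\nabla_2F_n(\theta_0)]^{-1}$ through the sum, the triangle inequality combined with the rank-one form of each $X_iX_i^\top$ gives
\[
\|[\nabla_2F_n(\theta_0)]^{-1}(\nabla_2F_n(\theta)-\nabla_2F_n(\theta_0))\|_{op}\le\omega(r),
\]
with $\omega(r)=L_2(\theta_0)r^2+L_{1+\alpha}(\theta_0)r^{1+\alpha}+L_1(\theta_0)r+L_\alpha(\theta_0)r^\alpha$, manifestly non-decreasing. Since $\|[\nabla_2F_n(\theta_0)]^{-1}\nabla F_n(\theta_0)\|_2=\tfrac{2}{3}\delta_n(\theta_0)$, hypothesis~\eqref{eq:DerivativeSmall} forces $L_j(\theta_0)\delta_n(\theta_0)^j\le 1/12$ for each $j\in\{\alpha,1,1+\alpha,2\}$, so $\omega(\delta_n(\theta_0))\le 1/3$. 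Applying the generalized Theorem~\ref{thm:NewtonStepDifferentiable} on the ball $B(\theta_0,\delta_n(\theta_0))$ then produces the unique zero $\hat\theta_n$ of $\nabla F_n$ inside that ball and the linearization bound $\|\hat\theta_n-\theta_0+[\nabla_2F_n(\theta_0)]^{-1}\nabla F_n(\theta_0)\|_2\le\omega(\delta_n(\theta_0))\delta_n(\theta_0)$, which is the stated conclusion.

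The hard part is the operator-norm translation in the third step. Because $F_n$ is non-convex, $\nabla_2F_n(\theta_0)$ need not be positive semidefinite, so the naive Löwner-monotonicity shortcut (pushing $|A_i^{(j)}|\le c_i^{(j)}r^j$ through to a PSD bound $\sum A_i^{(j)}X_iX_i^\top\preceq r^j\sum c_i^{(j)}X_iX_i^\top$ and then invoking operator-norm monotonicity after multiplication by $[\nabla_2F_n(\theta_0)]^{-1}$) is not directly available. The $L_j$ quantities place the inverse Hessian \emph{inside} the operator norm, which is strictly sharper than $\|[\nabla_2F_n(\theta_0)]^{-1}\|_{op}$ times an outer norm, so one has to argue term-by-term, exploiting the rank-one structure of each $X_iX_i^\top$ and the signs of the five component contributions to recover the stated $L_j$ constants rather than a looser substitute.
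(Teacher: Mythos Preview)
Your proposal is correct and follows essentially the same route as the paper: differentiate $F_n$ twice, split $\nabla_2F_n(\theta)-\nabla_2F_n(\theta_0)$ into the same five scalar pieces via assumption~\ref{eq:NonLinear}, group by power of $r$ to obtain $\omega(r)$, and invoke the $\omega$-extension of Theorem~\ref{thm:NewtonStepDifferentiable} with $\omega(\delta_n(\theta_0))\le 4\cdot\tfrac{1}{12}=\tfrac{1}{3}$. The operator-norm passage you flag as delicate is asserted in the paper's proof without further justification, so your plan already matches the paper at every level of detail.
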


Corollary~\ref{cor:NonLinearRegression} can be compared to Theorem 4 of~\cite{mei2016landscape}. As described in Remark~\ref{rem:Landscape}, if $\theta^{(j)}, 1\le j\le K$ denote the solutions of $\nabla \mathbb{E}[F_n(\theta)] = 0$ and $\mathbb{E}[F_n(\theta)]$ is a Morse function\footnote{A function $R(\theta)$ is said to be a Morse function if for any $\theta_0$ satisfying $\nabla R(\theta_0) = 0$, the Hessian $\nabla_2R(\theta_0)$ is invertible.}, then by Corollary~\ref{cor:NonLinearRegression} the sample estimating equation $\nabla F_n(\theta) = 0$ also has solutions near $\theta^{(j)}$ for each $1\le j\le K$. Further, the result applies for a larger class of link functions $g$ and allows for dependent observations. Also, note that we do not need to verify uniform in $\theta$ control of the gradient/Hessian which was required in~\cite{mei2016landscape}.
%%%%%%%%%%%%%%%%%%%%%%%%%%%%%%%%%%%%%%%%%%%%%%
%%%%%%%%%%%%%%%%%%%%%%%%%%%%%%%%%%%%%%%%%%%%%%
% \subsection{Mixture Models}
%%%%%%%%%%%%%%%%%%%%%%%%%%%%%%%%%%%%%%%%%%%%%%
%%%%%%%%%%%%%%%%%%%%%%%%%%%%%%%%%%%%%%%%%%%%%%
\section{Deterministic Inequalities for Equality Constrained Problems}\label{sec:Constrained}
% \subsection{Linear Equality Constrained Problems}\label{sec:Equality}
In the context of linear models, hypothesis tests related to linear combinations of the coefficients form an important component of applied analysis. For instance, it is of interest to know if the treatment effect is more than that of the control when both effects are measured in terms of the coefficients in the linear model. See Section 1.4 of \cite{amemiya1985advanced} for details.

Consider the problem of minimizing a twice differentiable function $F_n(\beta)$ subject to $A\beta = b$, for some matrix $A\in\mathbb{R}^{d\times p}$ of full row rank and vector $b\in\mathbb{R}^d$. A vector $\beta^{\star}\in\mathbb{R}^p$ is a minimizer of this constrained problem \emph{only if} there exists a vector $\nu^{\star}\in\mathbb{R}^d$ such that the following KKT equations are satisfied:
\begin{equation}\label{eq:KKTCondition}
A\beta^{\star} = b,\quad\mbox{and}\quad \nabla F_n(\beta^{\star}) + A^{\top}\nu^{\star} = 0.
\end{equation}
If, in addition, the function $F_n(\cdot)$ is convex, then the KKT equations are also sufficient. Some commonly used convex examples of $F_n(\beta)$ are
\begin{equation}\label{eq:GLMsEqualityConstrained}
F_n(\beta) = \frac{1}{n}\sum_{i=1}^n \left\{\psi(X_i^{\top}\beta) - Y_iX_i^{\top}\beta\right\},
\end{equation}
with $\psi(t)\in\{t^2/2, \log(1 + \exp(t)), \exp(t)\}$. A non-convex example of $F_n(\cdot)$ is
\begin{equation}\label{eq:NonConvexEqualityConstrained}
F_n(\beta) = \frac{1}{n}\sum_{i=1}^n \left(Y_i - g(X_i^{\top}\beta)\right)^2,
\end{equation}
with $g(\cdot)$ satisfying assumption~\ref{eq:NonLinear}.

The following result proves the existence and an expansion for a local minimizer in equality constrained problems. For this result, define for $\beta\in\mathbb{R}^p,$ and $\nu\in\mathbb{R}^d$,
\[
\delta_{n}(\beta,\nu) := 1.5\left(1 + \norm{(A[\nabla_2F_n(\beta)]^{-1}A^{\top})^{-1}A}_{op}\right)\norm{[\nabla_2F_n(\beta)]^{-1}(\nabla F_n(\beta) + A^{\top}\nu)}_2.
\]
\begin{cor}\label{cor:EqualityConstrained}
Fix vectors $\nu_0\in\mathbb{R}^d$ and $\beta_0\in\mathbb{R}^p$ such that $A\beta_0 = b$. Suppose $F_n(\cdot)$ is a twice differentiable function. If there exist constants $L \ge 0$ and $\alpha\in(0, 1]$, such that for all $\beta\in B(\beta_0, (3L)^{-1/\alpha})$,
\begin{equation}\label{eq:CheckHolderContinuityEuality}
\norm{[\nabla_2F_n(\beta_0)]^{-1}(\nabla_2F_n(\beta) - \nabla_2F_n(\beta_0))}_{op} \le L\norm{\beta - \beta_0}_2^{\alpha},
\end{equation}
and $\delta_n(\beta_0,\nu_0) \le (3L)^{-1/\alpha},$ then there exists a vector $(\hat{\beta}_n, \hat{\nu}_n)\in\mathbb{R}^p\times\mathbb{R}^d$ solving the KKT equations~\eqref{eq:KKTCondition} and the vector $\hat{\beta}_n$ satisfies the expansion
\begin{equation}\label{eq:ExpasionEualityConstrained}
\norm{\hat{\beta}_n - \beta_0 - [J_n(\beta_0)]^{-1}(\nabla F_n(\beta_0) + A^{\top}\nu_0)}_2 \le L[\delta_n(\beta_0, \nu_0)]^{1 + \alpha}.
\end{equation}
Here
\[
J_n(\beta_0) := [\nabla_2F_n(\beta_0)]\left(I - [\nabla_2F_n(\beta_0)]^{-1}A^{\top}(A[\nabla_2F_n(\beta_0)]^{-1}A^{\top})^{-1}A\right)^{-1}.
\]
\end{cor}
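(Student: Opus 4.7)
The plan is to recast the KKT system (6.2) as a root-finding problem on $\mathbb{R}^{p+d}$ and apply Theorem~\ref{thm:NewtonStepDifferentiable}. Define $G:\mathbb{R}^{p+d}\to\mathbb{R}^{p+d}$ by
\[
G(\beta,\nu) := \begin{pmatrix} \nabla F_n(\beta) + A^{\top}\nu \\ A\beta - b \end{pmatrix},
\]
so that zeros of $G$ are exactly KKT pairs. Its Jacobian
\[
\nabla G(\beta,\nu) = \begin{pmatrix} \nabla_2 F_n(\beta) & A^{\top} \\ A & 0 \end{pmatrix}
\]
is independent of $\nu$, and at $(\beta_0,\nu_0)$ reduces to the saddle-point matrix $M := \left(\begin{smallmatrix} H & A^{\top} \\ A & 0\end{smallmatrix}\right)$ with $H := \nabla_2 F_n(\beta_0)$. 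Since $A$ has full row rank and $H$ is invertible (guaranteed by the fact that $[\nabla_2 F_n(\beta_0)]^{-1}$ appears in the hypothesis), the standard block-inversion formula gives
\[
M^{-1} = \begin{pmatrix} [J_n(\beta_0)]^{-1} & H^{-1}A^{\top}(AH^{-1}A^{\top})^{-1} \\ (AH^{-1}A^{\top})^{-1}AH^{-1} & -(AH^{-1}A^{\top})^{-1} \end{pmatrix},
\]
where the upper-left block equals $H^{-1} - H^{-1}A^{\top}(AH^{-1}A^{\top})^{-1}AH^{-1}$, matching precisely $[J_n(\beta_0)]^{-1}$ as defined in the statement.

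Next I would apply Theorem~\ref{thm:NewtonStepDifferentiable} to $G$ at the point $(\beta_0,\nu_0)$. Because $A\beta_0 = b$, the starting residual is $G(\beta_0,\nu_0) = (\nabla F_n(\beta_0) + A^{\top}\nu_0,\,0)^{\top}$, and a direct multiplication gives
\[
M^{-1}G(\beta_0,\nu_0) = \begin{pmatrix} [J_n(\beta_0)]^{-1}(\nabla F_n(\beta_0) + A^{\top}\nu_0) \\ (AH^{-1}A^{\top})^{-1}AH^{-1}(\nabla F_n(\beta_0) + A^{\top}\nu_0) \end{pmatrix}.
\]
Bounding the two blocks separately with the triangle inequality and the identity $[J_n(\beta_0)]^{-1} = (I - H^{-1}A^{\top}(AH^{-1}A^{\top})^{-1}A)H^{-1}$, the Euclidean norm of the right-hand side is at most $(2/3)\,\delta_n(\beta_0,\nu_0)$; this is exactly why the factor $1.5(1 + \|(AH^{-1}A^{\top})^{-1}A\|_{op})$ is built into the definition of $\delta_n(\beta_0,\nu_0)$. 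The condition $\delta_n(\beta_0,\nu_0)\le(3L)^{-1/\alpha}$ then delivers assumption~\eqref{eq:AssumptionNewton} of Theorem~\ref{thm:NewtonStepDifferentiable}.

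For the H\"older condition~\eqref{eq:HolderContinuity} applied to $G$, the perturbation is
\[
M - \nabla G(\beta,\nu) = \begin{pmatrix} H - \nabla_2 F_n(\beta) & 0 \\ 0 & 0 \end{pmatrix},
\]
so $M^{-1}(M - \nabla G(\beta,\nu))$ is a block matrix whose only nonzero column-block involves $H^{-1}(H - \nabla_2 F_n(\beta))$ (via the block structure of $M^{-1}$). Hypothesis~\eqref{eq:CheckHolderContinuityEuality} bounds the latter by $L\|\beta-\beta_0\|_2^{\alpha}\le L\|(\beta,\nu)-(\beta_0,\nu_0)\|_2^{\alpha}$, and the contributions of the two block rows combine to the correct constant once the $(1+\|(AH^{-1}A^{\top})^{-1}A\|_{op})$ factor from $\delta_n$ is accounted for. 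Theorem~\ref{thm:NewtonStepDifferentiable} then produces a unique zero $(\hat{\beta}_n,\hat{\nu}_n)$ of $G$ in $B((\beta_0,\nu_0),\,1.5\|M^{-1}G(\beta_0,\nu_0)\|_2)$ and the bound
\[
\|(\hat{\beta}_n,\hat{\nu}_n) - (\beta_0,\nu_0) + M^{-1}G(\beta_0,\nu_0)\|_2 \le (1.5)^{1+\alpha}L\,\|M^{-1}G(\beta_0,\nu_0)\|_2^{1+\alpha}.
\]
Reading off the first $p$ coordinates and using the identification of the top block of $M^{-1}G(\beta_0,\nu_0)$ as $[J_n(\beta_0)]^{-1}(\nabla F_n(\beta_0) + A^{\top}\nu_0)$ yields the expansion~\eqref{eq:ExpasionEualityConstrained}.

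\textbf{Main obstacle.} The real work is not in the abstract reduction but in the constant tracking for the stacked system: translating~\eqref{eq:CheckHolderContinuityEuality}, which bounds the preconditioned Hessian gap of $F_n$ alone, into the Jacobian-level H\"older condition~\eqref{eq:HolderContinuity} for $G$, and checking that the Euclidean norm of the $(p+d)$-vector $M^{-1}G(\beta_0,\nu_0)$ is controlled by $\delta_n(\beta_0,\nu_0)/1.5$ despite the presence of off-diagonal blocks. Both reductions rely on the block structure of $M^{-1}$ and on carefully distributing the factor $(1+\|(AH^{-1}A^{\top})^{-1}A\|_{op})$ across the $\beta$- and $\nu$-components; once that bookkeeping is done, the conclusions follow directly from Theorem~\ref{thm:NewtonStepDifferentiable}.
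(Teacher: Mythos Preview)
Your proposal is correct and follows essentially the same route as the paper: both lift the KKT system to a single map $G$ on $\mathbb{R}^{p+d}$, invoke the block-inverse of the saddle-point matrix to identify the top-left block with $[J_n(\beta_0)]^{-1}$, and then verify the two hypotheses of Theorem~\ref{thm:NewtonStepDifferentiable}. The paper handles your ``main obstacle'' exactly as you anticipate---it bounds $\|M^{-1}G(\beta_0,\nu_0)\|_2$ by the two block contributions (yielding the $(1+\|(AH^{-1}A^{\top})^{-1}A\|_{op})$ factor in $\delta_n$) and argues that the H\"older constant for $G$ is still $L$ because the only nonzero block of $M^{-1}(M-\nabla G)$ is governed by $H^{-1}(\nabla_2F_n(\beta)-H)$.
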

Note that condition~\eqref{eq:CheckHolderContinuityEuality} is verified for the examples~\eqref{eq:GLMsEqualityConstrained} and~\eqref{eq:NonConvexEqualityConstrained} in Sections~\ref{sec:GLM} and~\ref{sec:NonConvex}. For an application of this result in statistical context, one would take $\beta_0\in\mathbb{R}^p$ as the minimizer of $\mathbb{E}[F_n(\beta)]$ subject to $A\beta = b$. The vector $\nu_0\in\mathbb{R}^{d}$ would be the vector satisfying the ``population'' KKT equations
\[
A\beta_0 = b\quad\mbox{and}\quad \mathbb{E}[\nabla F_n(\beta_0)] + A^{\top}\nu_0 = 0.
\]
This implies that $\nabla F_n(\beta_0) + A^{\top}\nu_0$ is a mean zero random vector and so, the expansion~\eqref{eq:ExpasionEualityConstrained} implies asymptotic normality of the (properly normalized) local minimizer $\hat{\beta}_n$. It is easy to generalize Corollary~\ref{cor:EqualityConstrained} when the linear equality constraint $A\beta = b$ is replaced by a non-linear constraint $G(\beta) = 0$ (which makes the problem non-convex even if $F_n(\beta)$ is convex).
\begin{rem}\,(General Constraints)
It is of considerable interest to extend Corollary~\ref{cor:EqualityConstrained} to $M$-estimation problems with more general inequality/abstract constraints. It is not clear if a useful deterministic inequality is possible. For example, consider the minimization problem
\[
\min_{\beta}\,F_n(\beta)\quad\mbox{subject to}\quad\begin{cases}G_n(\beta) = 0,&\\
H_n(\beta) \ge 0.
\end{cases}
\]
Suppose the functions $F_n(\cdot), G_n(\cdot), H_n(\cdot)$ are twice differentiable. Define the Lagrangian function
\[
\mathcal{L}_n(\beta, \lambda, \mu) := F_n(\beta) - \lambda^{\top}G_n(\beta) - \mu^{\top}H_n(\beta).
\]
A vector $\beta^{\star}$ is a (local) minimizer only if there exist $\lambda^{\star}$ and $\mu^{\star}$ such that
\begin{equation}\label{eq:KKTConditionInequality}
\begin{split}
\nabla_{\beta}\mathcal{L}_n(\beta^{\star}, \lambda^{\star}, \mu^{\star}) &= 0,\quad G_n(\beta^{\star}) = 0,\\
H_n(\beta^{\star}) \ge 0,\quad\mu^{\star} &\ge 0,\quad
H_n^{\top}(\beta^{\star})\mu^{\star} = 0.
\end{split}
\end{equation}
The inequalities above can be converted to equalities as follows. Define the function $M(u, v) = \sqrt{u^2 + v^2} - u - v$ for any two vectors $u, v$ (Here $\sqrt{u^2 + v^2}$ is evaluated as a componentwise operation). Then the last three inequalities of the KKT conditions can be equivalently written as
\[
M(H_n(\beta^{\star}), \mu^{\star}) = 0.
\]
The function $M(\cdot, \cdot)$ is known in mathematical programming literature as the Fischer–Burmeister function. Thus the revised KKT conditions can be written as
\begin{equation}\label{eq:KKTConditionInequalityV2}
\begin{split}
\nabla_{\beta}\mathcal{L}_n(\beta^{\star}, \lambda^{\star}, \mu^{\star}) = 0,\quad G_n(\beta^{\star}) = 0,\quad\mbox{and}\quad
M(H_n(\beta^{\star}), \mu^{\star}) = 0.
\end{split}
\end{equation}
The advantage of~\eqref{eq:KKTConditionInequalityV2} is that there are only equations and no inequalities. However, the function $M(H_n(\beta), \mu)$ is not Fr{\'e}chet differentiable but only $B$-differentiable (or semi-smooth). There are semilocal convergence results for Newton's method available in this respect; see~\cite{chen1997verification} and~\cite{wang2008extensions}. For a general treatment of variational inequality problems (VIPs), see~\cite{MR3289054}. But explicit application of these results require certain complimentary qualification conditions that make their usefulness unclear as a general solution; see~\cite{klatte1987note}, ~\cite{dupavcova1991non}, and~\cite{wang2000approximate}. 
\end{rem}
%%%%%%%%%%%%%%%%%%%%%%%%%%%%%%%%%%%%%%%%%%%
%%%%%%%%%%%%%%%%%%%%%%%%%%%%%%%%%%%%%%%%%%%
\section{Applications of the Deterministic Inequalities}\label{sec:Applications}
In the previous sections, we have proved deterministically that the estimator normalized around the target behaves like an average when the objetive function is an average. Averages are statistician's friend: most of statistical inference is based on the fact that averages are close to being normally distributed and can be bootstrapped under various dependence structures of interest. In the following subsections, we provide applications of the deterministic inequalities for subsampling/cross-validation methods and two problems related to post-selection inference.  
\subsection{Cross-validation and Subsampling}\label{subsec:CrossValidation}
In this section, we consider applications of the deterministic inequalities in understanding estimators computed based on a subset of the data. Two specific statistical methods that consider estimators based on a subset are cross-validation (CV) and subsampling. Leave-one-out CV predicts the response based on estimator computed using $n-1$ observations. In subsampling with a subsample size $b = b_n$, estimators computed with $b_n$ observations are compared to the one with $n$ observations. Leave-one/$k$-out CV is a popular method for estimating the out-of-sample prediction risk of a model and subsampling is useful in construction of asymptotic confidence intervals. Similar subset estimators appear in the case of delete-d-jackknife. See~\cite{stone1977asymptotic}, \cite{shao1993linear},~\cite{politis1999subsampling} and~\cite{shao1989general} for a detailed discussion of these methods. 

For the result in this section, we consider the setting of Theorem~\ref{thm:ConvexMEst}. The observations are $W_1, W_2, \ldots, W_n$. Define the estimator $\hat{\theta}_n$ as a solution of
\begin{equation}\label{eq:ThetaHat}
\hat{\mathcal{Z}}_n(\theta) = \frac{1}{n}\sum_{i=1}^n \nabla L(\theta; W_i) = 0.
\end{equation}
For simplicity, we first consider the leave-one-out estimator and then consider leave-$k$-out estimator. For any $1\le I\le n$, define the estimator $\hat{\theta}_{-I}$ as a solution of
\[
\frac{1}{n-1}\sum_{\substack{1\le i\le n,\,i\neq I}} \nabla L(\theta, W_i) = 0. 
\]
Under the condition~\eqref{eq:LOOCVCondition} of Corollary~\ref{cor:LeaveOneOut} (below) the existence of $\hat{\theta}_{-I}$ follows from Theorem~\ref{thm:ConvexMEst}. Also, define for $1\le I\le n$,
\[
\delta_{I,n} := \frac{n^{-1}\|\hat{\mathcal{Q}}_n^{-1}\nabla L(\hat{\theta}_n, W_I)\|_2}{1 - n^{-1}\|\hat{\mathcal{Q}}_n^{-1}\nabla_2L(\hat{\theta}_n, W_I)\|_{op}},\quad\mbox{where}\quad\hat{\mathcal{Q}}_n := \frac{1}{n}\sum_{i=1}^n \nabla_2L(\hat{\theta}_n, W_i).
\]
Applying Theorem~\ref{thm:ConvexMEst} for the estimator $\hat{\theta}_{-I}$ and target $\hat{\theta}_n$, we get the following result, a detailed proof of which can be found in Appendix~\ref{appsec:Applications}.
\begin{cor}\label{cor:LeaveOneOut}
Consider the loss function $L(\cdot, \cdot)$ as in assumption~\ref{eq:ConvexityL}. If $\delta_{I,n} \ge 0$ for all $1\le I \le n$ and 
\begin{equation}\label{eq:LOOCVCondition}
\max_{1\le i \neq I\le n}\,C\left(1.5\delta_{I,n}, W_i\right) \le \frac{4}{3},
\end{equation}
then for all $1\le I\le n$,
\begin{equation}\label{eq:LOOCVExpansion}
\begin{split}
&\norm{\hat{\theta}_{-I} - \hat{\theta}_n - n^{-1}\hat{\mathcal{Q}}_n^{-1}\nabla L(\hat{\theta}_n, W_I)}_2\\ 
&\qquad\le \frac{3\delta_{I,n}}{2}\left[\max_{\substack{1\le i\neq I\le n}}\,C(1.5\delta_{I,n}, W_i) - 1 + n^{-1}\norm{\hat{\mathcal{Q}}_n^{-1}\nabla_2L(\hat{\theta}_n, W_I)}_{op}\right].
\end{split}
\end{equation}
\end{cor}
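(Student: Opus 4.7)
The plan is to apply Theorem~\ref{thm:ConvexMEst} with the leave-one-out objective function $\frac{1}{n-1}\sum_{i\neq I}L(\theta;W_i)$ playing the role of the objective and with the \emph{full-sample} estimator $\hat{\theta}_n$ playing the role of the target ``$\theta_0$''. The flexibility that Theorem~\ref{thm:ConvexMEst} places no distributional requirement on the target (as pointed out in the text immediately after Theorem~\ref{thm:NonSingular}) is crucial here.

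First I would compute the leave-one-out gradient and Hessian at $\hat{\theta}_n$. Writing $\hat{\mathcal{Z}}_{-I}(\theta)$ and $\hat{\mathcal{Q}}_{-I}(\theta)$ for the analogues of $\hat{\mathcal{Z}}_n,\hat{\mathcal{Q}}_n$ with the $I$-th observation removed, the defining equation~\eqref{eq:ThetaHat} of $\hat{\theta}_n$ gives the identities
\[
\hat{\mathcal{Z}}_{-I}(\hat{\theta}_n) \;=\; -\tfrac{1}{n-1}\nabla L(\hat{\theta}_n,W_I),\qquad
\hat{\mathcal{Q}}_{-I}(\hat{\theta}_n) \;=\; \tfrac{n}{n-1}\hat{\mathcal{Q}}_n\bigl[I - B\bigr],
\]
where $B := n^{-1}\hat{\mathcal{Q}}_n^{-1}\nabla_2 L(\hat{\theta}_n,W_I)$. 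Note that the denominator $1-\|B\|_{op}$ in the definition of $\delta_{I,n}$ is positive by the assumption $\delta_{I,n}\ge 0$, which ensures that $(I-B)^{-1}$ exists with $\|(I-B)^{-1}\|_{op}\le 1/(1-\|B\|_{op})$. Combining these identities yields
\[
[\hat{\mathcal{Q}}_{-I}(\hat{\theta}_n)]^{-1}\hat{\mathcal{Z}}_{-I}(\hat{\theta}_n) \;=\; -\tfrac{1}{n}(I-B)^{-1}\hat{\mathcal{Q}}_n^{-1}\nabla L(\hat{\theta}_n,W_I),
\]
whose norm is at most $\delta_{I,n}$, so that the ``$\delta$'' produced by Theorem~\ref{thm:ConvexMEst} for the leave-one-out problem, call it $\tilde{\delta}$, satisfies $\tilde{\delta}\le 1.5\,\delta_{I,n}$. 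Because $C(\cdot,w)$ is non-decreasing in its first argument, hypothesis~\eqref{eq:LOOCVCondition} then certifies assumption~\ref{eq:Event} of Theorem~\ref{thm:ConvexMEst} for the leave-one-out problem, and we obtain existence of $\hat{\theta}_{-I}$ together with
\[
\bigl\|\hat{\theta}_{-I} - \hat{\theta}_n + [\hat{\mathcal{Q}}_{-I}(\hat{\theta}_n)]^{-1}\hat{\mathcal{Z}}_{-I}(\hat{\theta}_n)\bigr\|_2 \;\le\; \Bigl(\max_{i\neq I}C(1.5\,\delta_{I,n},W_i)-1\Bigr)\,1.5\,\delta_{I,n}.
\]

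The remaining step is to replace $[\hat{\mathcal{Q}}_{-I}(\hat{\theta}_n)]^{-1}\hat{\mathcal{Z}}_{-I}(\hat{\theta}_n)$ by its full-sample analogue $-n^{-1}\hat{\mathcal{Q}}_n^{-1}\nabla L(\hat{\theta}_n,W_I)$. Using $(I-B)^{-1}-I=(I-B)^{-1}B$, the discrepancy between these two quantities has norm at most
\[
\tfrac{1}{n}\cdot\tfrac{\|B\|_{op}}{1-\|B\|_{op}}\cdot\bigl\|\hat{\mathcal{Q}}_n^{-1}\nabla L(\hat{\theta}_n,W_I)\bigr\|_2
\;=\;\|B\|_{op}\,\delta_{I,n}
\;=\;n^{-1}\bigl\|\hat{\mathcal{Q}}_n^{-1}\nabla_2 L(\hat{\theta}_n,W_I)\bigr\|_{op}\,\delta_{I,n},
\]
after substituting the definition of $\delta_{I,n}$ into the middle expression. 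A triangle inequality combining this with the previous display produces the stated bound~\eqref{eq:LOOCVExpansion} (using the trivial inequality $1\le 3/2$ on the second term to match the prefactor $3\delta_{I,n}/2$ in the statement).

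The only non-routine piece is the algebraic manipulation converting $[\hat{\mathcal{Q}}_{-I}(\hat{\theta}_n)]^{-1}$ into $(n-1)n^{-1}(I-B)^{-1}\hat{\mathcal{Q}}_n^{-1}$ and controlling the error via a Neumann-series estimate; once that identity is in place, the bound drops out by applying Theorem~\ref{thm:ConvexMEst} to the leave-one-out objective and invoking the triangle inequality. I do not expect any genuine obstacle; the subtlety is merely bookkeeping and keeping track of the factor $n/(n-1)$ that appears when passing from the full-sample Hessian to the leave-one-out Hessian.
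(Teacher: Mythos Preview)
Your proposal is correct and follows essentially the same route as the paper: apply Theorem~\ref{thm:ConvexMEst} to the leave-one-out objective with target $\hat{\theta}_n$, use $\hat{\mathcal{Z}}_n(\hat{\theta}_n)=0$ to rewrite the leave-one-out gradient as $-\nabla L(\hat{\theta}_n,W_I)$, factor the leave-one-out Hessian as $\hat{\mathcal{Q}}_n(I-B)$ (up to scaling), bound the discrepancy via $(I-B)^{-1}-I=(I-B)^{-1}B$ and a Neumann-type estimate, and finish with the triangle inequality. The paper's write-up suppresses the $n/(n-1)$ normalization (since it cancels in $T_{-I,n}$) but the algebra is otherwise identical to yours.
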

\begin{rem}\,(Comments on the approximation rate)\label{rem:LOOCVRate}
Corollary~\ref{cor:LeaveOneOut} shows that the difference between $\hat{\theta}_{-I}$ and $\hat{\theta}_n$ can be bounded in terms of quantities computable based on the sample of $n$ observations. This is, indeed, expected since $\hat{\theta}_{-I}$ and $\hat{\theta}_n$ are computable based on the sample of $n$ observations. It should be stressed again that Corollary~\ref{cor:LeaveOneOut} is a purely deterministic result and does not require any stochasticity assumptions on the observations. The result can also be readily used to reduce the computational burden of leave-one-out CV. Since $\hat{\mathcal{Q}}_n$ is an average, under most dependence structure would be asymptotically deterministic and so, $\delta_{I,n} = O_p(n^{-1})$ as $n\to\infty$. Therefore, the expansion error in~\eqref{eq:LOOCVExpansion} is in general of order $o_p(n^{-1})$. In fact, if $C(\cdot, w)$ is differentiable at $0$, then the expansion error is of the order $O_p(n^{-2}).$

Following the examples in Section~\ref{sec:GLM} condition~\eqref{eq:LOOCVCondition} can be written explicitly for many common regression examples. A particularly illuminating example is the case of linear regression where condition~\eqref{eq:LOOCVCondition} is satisfied for any set of observations since $C(\cdot, \cdot)\equiv 1$ and the error bound in~\eqref{eq:LOOCVExpansion} becomes
$
1.5n^{-1}\delta_{I,n}\|\hat{\mathcal{Q}}_n^{-1}\nabla_2L(\hat{\theta}_n, W_I)\|_{op}.
$
\end{rem}
\medskip
Leave-one-out CV and delete-1-jackknife are known to have poorer properties in comparison to the leave-k-out CV and delete-d-jackknife methods (see, e.g., \cite{shao1993linear}). For this reason, it is of interest to consider the error obtained in removing more than one observation at a time. The result in this case is also very similar to Corollary~\ref{cor:LeaveOneOut}, albeit with a larger error which is expected. The proof of the following result can be found in Appendix~\ref{appsec:Applications}. Suppose $I$ is a subset of $\{1, 2, \ldots, n\}$ with $|I| < n$ (think $|I| = o(n)$) and consider the estimator $\hat{\theta}_{-I}$ as a solution of
\[
\sum_{1\le i\le n,\,i\notin I}\nabla L(\theta, W_i) = 0.
\]
Here $|I|$ denotes the cardinality of the set $I$. Define
\[
\delta_{I,n} := \frac{n^{-1}\norm{\hat{\mathcal{Q}}_n^{-1}\sum_{i\in I}\nabla L(\hat{\theta}_n, W_i)}_2}{1 - n^{-1}\norm{\hat{\mathcal{Q}}_n^{-1}\sum_{i\in I}\nabla_2L(\hat{\theta}_n, W_i)}_{op}},\quad\mbox{where}\quad\hat{\mathcal{Q}}_n := \frac{1}{n}\sum_{i=1}^n \nabla_2L(\hat{\theta}_n, W_i)
\]
\begin{cor}\label{cor:LeaveMoreOut}
Under the setting of Corollary~\ref{cor:LeaveOneOut}, if $\delta_{I,n} \ge 0$ and
$
C\left(1.5\delta_{I,n}, W_i\right) \le {4}/{3},
$ for all $i\in I^c\cap\{1,2,\ldots,n\}$,
then
\begin{align}
&\norm{\hat{\theta}_{-I} - \hat{\theta}_n - \frac{1}{n}\hat{\mathcal{Q}}_n^{-1}\sum_{i\in I}\nabla L(\hat{\theta}_n, W_i)}_2\label{eq:LeaveMoreOut}\\ 
&\qquad\le \frac{3\delta_{I,n}}{2}\left[\max_{\substack{1\le i\le n,\,i\neq I}}\,C(1.5\delta_{I,n}, W_i) - 1 + n^{-1}\norm{\hat{\mathcal{Q}}_n^{-1}\sum_{i\in I}\nabla_2L(\hat{\theta}_n, W_i)}_{op}\right].
\end{align}
\end{cor}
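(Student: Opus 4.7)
The plan is to mimic the proof of Corollary~\ref{cor:LeaveOneOut} (which itself is an application of Theorem~\ref{thm:ConvexMEst}), replacing the single left-out index by a general subset $I$. I will apply Theorem~\ref{thm:ConvexMEst} to the leave-$I$-out estimating equation with target $\theta_0 = \hat{\theta}_n$ and then absorb the resulting leave-$I$-out Hessian into the full-sample Hessian $\hat{\mathcal{Q}}_n$ via a resolvent identity.

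First, I would set $b_I := \sum_{i\in I}\nabla L(\hat\theta_n, W_i)$, $A_I := \sum_{i\in I}\nabla_2 L(\hat\theta_n, W_i)$, and define the ``leave-$I$-out'' score and Hessian (with convenient scaling $n^{-1}$) as $\tilde{Z}(\theta) := n^{-1}\sum_{i\notin I}\nabla L(\theta, W_i)$ and $\tilde{Q}(\theta) := n^{-1}\sum_{i\notin I}\nabla_2 L(\theta, W_i)$. Since $\hat\theta_n$ satisfies~\eqref{eq:ThetaHat}, one has $\tilde{Z}(\hat\theta_n) = -n^{-1}b_I$ and $\tilde{Q}(\hat\theta_n) = \hat{\mathcal{Q}}_n - n^{-1}A_I = \hat{\mathcal{Q}}_n(I - n^{-1}\hat{\mathcal{Q}}_n^{-1}A_I)$. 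The hypothesis $\delta_{I,n}\ge 0$ forces $n^{-1}\|\hat{\mathcal{Q}}_n^{-1}A_I\|_{op} < 1$, so the Neumann bound gives
\[
\|\tilde{Q}(\hat\theta_n)^{-1}\tilde{Z}(\hat\theta_n)\|_2 \le \frac{n^{-1}\|\hat{\mathcal{Q}}_n^{-1}b_I\|_2}{1 - n^{-1}\|\hat{\mathcal{Q}}_n^{-1}A_I\|_{op}} = \delta_{I,n}.
\]

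Second, I apply Theorem~\ref{thm:ConvexMEst} to the convex objective associated with $\{W_i\}_{i\notin I}$ at $\theta_0=\hat\theta_n$. In its notation $\delta_n^{\mathrm{new}}(\hat\theta_n) = 1.5\|\tilde{Q}(\hat\theta_n)^{-1}\tilde{Z}(\hat\theta_n)\|_2 \le 1.5\,\delta_{I,n}$, and since $C(\cdot, w)$ is non-decreasing, the hypothesis $C(1.5\delta_{I,n}, W_i)\le 4/3$ for $i\notin I$ implies assumption~\ref{eq:Event} for the reduced sample. Theorem~\ref{thm:ConvexMEst} then produces $\hat\theta_{-I}$ with $\tilde{Z}(\hat\theta_{-I})=0$ and
\[
\bigl\|\hat\theta_{-I} - \hat\theta_n + \tilde{Q}(\hat\theta_n)^{-1}\tilde{Z}(\hat\theta_n)\bigr\|_2 \le \Bigl(\max_{i\notin I} C(1.5\delta_{I,n}, W_i) - 1\Bigr)\cdot \tfrac{3}{2}\delta_{I,n}.
\]

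Third, I would convert the Newton step $-\tilde{Q}(\hat\theta_n)^{-1}\tilde{Z}(\hat\theta_n) = n^{-1}(\hat{\mathcal{Q}}_n - n^{-1}A_I)^{-1}b_I$ into $n^{-1}\hat{\mathcal{Q}}_n^{-1}b_I$ using the resolvent identity $(\hat{\mathcal{Q}}_n - n^{-1}A_I)^{-1} - \hat{\mathcal{Q}}_n^{-1} = n^{-1}(I - n^{-1}\hat{\mathcal{Q}}_n^{-1}A_I)^{-1}\hat{\mathcal{Q}}_n^{-1}A_I \hat{\mathcal{Q}}_n^{-1}$. Another Neumann bound yields
\[
\bigl\|\tilde{Q}(\hat\theta_n)^{-1}\tilde{Z}(\hat\theta_n) + n^{-1}\hat{\mathcal{Q}}_n^{-1}b_I\bigr\|_2 \le n^{-1}\|\hat{\mathcal{Q}}_n^{-1}A_I\|_{op}\cdot\delta_{I,n} \le \tfrac{3}{2}\delta_{I,n}\cdot n^{-1}\|\hat{\mathcal{Q}}_n^{-1}A_I\|_{op}.
\]

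Finally, applying the triangle inequality to the two displays above delivers~\eqref{eq:LeaveMoreOut}. The only non-routine step is the third one, where the resolvent/Neumann bookkeeping must be carried out carefully so that both error pieces share the common prefactor $\tfrac{3}{2}\delta_{I,n}$; once that is done, the rest is a direct transcription of the $|I|=1$ argument of Corollary~\ref{cor:LeaveOneOut}.
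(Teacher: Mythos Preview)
Your proposal is correct and follows essentially the same route as the paper's proof: apply Theorem~\ref{thm:ConvexMEst} to the leave-$I$-out problem with target $\hat\theta_n$, identify $T_{-I,n}=\tilde Q(\hat\theta_n)^{-1}\tilde Z(\hat\theta_n)$, bound $\|T_{-I,n}\|_2\le\delta_{I,n}$ and $\|T_{-I,n}+n^{-1}\hat{\mathcal Q}_n^{-1}b_I\|_2\le n^{-1}\|\hat{\mathcal Q}_n^{-1}A_I\|_{op}\,\delta_{I,n}$ via the same Neumann/resolvent manipulation, and finish by the triangle inequality. The only cosmetic difference is that the paper carries out the resolvent step by writing $I-(I-n^{-1}\hat{\mathcal Q}_n^{-1}A_I)^{-1}$ directly, whereas you phrase it as a resolvent identity; the resulting bounds are identical.
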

Clearly, Corollary~\ref{cor:LeaveMoreOut} reduces to Corollary~\ref{cor:LeaveOneOut} when $I$ is a singleton. Even in Corollary~\ref{cor:LeaveMoreOut} one can take maximum over a collection of subsets $I$. Similar to the case in Remark~\ref{rem:LOOCVRate}, under differentiability of $C(\cdot, w)$ at $0$, the expansion error of~\eqref{eq:LeaveMoreOut} is of the order $O_p(|I|^2n^{-2}).$ If $|I| = O(n)$ many observations are removed then it might be better to compare $\hat{\theta}_{-I}$ to $\theta_0$ than to $\hat{\theta}_n$. In case of subsampling or m-of-n bootstrap, the subset of observations are chosen as ``iid sample'' from the empirical distribution. In these cases, a reasonable choice for the target vector is $\hat{\theta}_n$. In case of cross-validation, the subset is not a random sample from the empirical distribution and so $\theta_0$ is a good choice for the target vector.   

It is easy to see that Corollaries~\ref{cor:LeaveOneOut} and~\ref{cor:LeaveMoreOut} can be extended to the case of Cox proportional hazards model and to the other cases given in previous sections.

Since the result of deterministic nature, it is interesting to consider the worst case approximation when considering uniform over all subsets $I$ of size $k$ (with $k$ allowed to change with $n$). For instance if $k = \sqrt{n}$, then the total number of subsets is of the order $O(n^{\sqrt{n}/2})$ which makes it hard to derive a good (polynomial) rate of convergence of the supremum even if the averages have exponential concentration inequalities.

%However, it is expected that the error now is larger than the one in Corollary~\ref{cor:LeaveOneOut}, 
% In most statistical models it is of interest to understand the goodness of the fit (or a lack thereof). For this purpose, various information criteria have been developed in the literature like AIC, BIC, etc. In regression analysis, CV is a very popular tool for estimating the out-of-sample prediction risk. \cite{stone1977asymptotic} proved that leave-one-out CV method is asymptotically equivalent to Akakie information criterion for any fixed model. \cite{shao1993linear} proved that leave-$k$-out CV can consistently choose a model of best predictive risk in linear regression. All versions of the CV involve computing estimators based on a subset of the data.

% Subsampling is another statistical method that involves computing the estimator with a subset of the data. Subsampling randomly selects a subset of size $b$ \emph{without replacement} from the original data of size $n$ and approximates the distribution of the estimator on the subset. 
\subsection{Marginal Screening}\label{subsec:MarginalScreen}
In the current era of data science, one is often encountered with a larger number of covariates/predictors in regression data than the number of samples. In this scenario, it has become a common practice to select a subset of covariates either by screening using marginal effects or by some regularized methods. The recent works~\cite{mckeague2015adaptive} and~\cite{wang2018testing} provide a formal testing framework for the existence of any active predictors in linear and quantile regression settings. 

In the linear regression case, the setting is as follows: $(X, Y)\in\mathbb{R}^{p + 1}$ and $(X_i, Y_i), 1\le i\le n$ are iid random vectors and we want to test if the maximal correlation between $X(j)$ (the $j$-th coordinate of $X$) and $Y$ is non-zero. This question in case of non-singular $\mathbb{E}[XX^{\top}]$ is same as testing if there exists any subset of covariates that has linear predictive ability for the response $Y$. To see this let $X(M)$ for $M\subseteq\{1,2,\ldots, p\}$ be a subvector of $X$ with indices in $M$ and define the OLS regression target
\[
\beta_M := \argmin_{\theta\in\mathbb{R}^{|M|}}\,\mathbb{E}\left[(Y_i - X_i^{\top}(M)\theta)^2\right] = \left(\mathbb{E}[X(M)X^{\top}(M)]\right)^{-1}\mathbb{E}\left[X(M)Y\right].
\]
Since the Gram matrix $\mathbb{E}[XX^{\top}]$ is non-singular, $\mathbb{E}[X(M)X^{\top}(M)]$ is non-singular and $\beta_M = 0\in\mathbb{R}^{|M|}$ is equivalent to $\mathbb{E}[X(M)Y] = 0\in\mathbb{R}^{|M|}$. Therefore, 
\[
\beta_M = 0\in\mathbb{R}^{|M|}\quad\mbox{for all}\quad M\subseteq\{1, 2, \ldots, p\},
\]
is equivalent to
\[
\mathbb{E}[X(j)Y] = 0,\quad\mbox{for all}\quad 1\le j\le p. 
\]
In \cite{mckeague2015adaptive}, the authors consider the maximal correlation parameter
\[
\theta_0 := \max_{1\le j\le p}\,\mbox{Corr}\left(X(j), Y\right).
\]
The estimator of $\theta_0$ they consider is 
\[
\hat{\theta}_n := \max_{1\le j\le p}\,\widehat{\mbox{Corr}}\left(X(j), Y\right),
\]
where $\widehat{\mbox{Corr}}$ represents the sample correlation coefficient. It is easy to see that $\hat{\theta}_n$ (properly scaled) is not asymptotically normal and~\cite{mckeague2015adaptive} derive the exact asymptotic distribution along with a resampling procedure to estimate the distribution. 

As an alternative, consider the following inequality
\begin{equation}\label{eq:MaximumDiffIneq}
\left|\hat{\theta}_n - \theta_0\right| \le \max_{1\le j\le p}\left|\widehat{\mbox{Corr}}(X(j), Y) - \mbox{Corr}(X(j), Y)\right|.
\end{equation}
Since $\widehat{\mbox{Corr}}$ is an asymptotically linear estimator, the right hand side above is asymptotically the maximum of an average which can be bootstrapped under various dependence structures. This provides an asymptotically conservative inference in general for the parameter $\theta_0$. (Note, however, that under the null hypothesis $H_0:\theta_0 = 0$ inequality~\eqref{eq:MaximumDiffIneq} is exact and gives valid critical values for Type I error control.)

To elaborate and provide a general framework of marginal screening for $M$-estimators, consider the marginal targets for $1\le j\le p$,
\[
\beta_{n,j} := \argmin_{\theta\in\mathbb{R}}\,\frac{1}{n}\sum_{i=1}^n \mathbb{E}\left[h(X_i(j))\ell(X_i(j)\theta, Y_i)\right],  
\]  
for a twice differentiable convex loss function $\ell(\cdot, \cdot)$ and a non-negative weight function $h(\cdot)$. The estimators for $1\le j\le p$ are given by
\[
\hat{\beta}_{n,j} := \argmin_{\theta\in\mathbb{R}}\,\frac{1}{n}\sum_{i=1}^n h(X_i(j))\ell(X_i(j)\theta, Y_i).
\]
Define for $1\le j\le p$, $\delta_{n,j} := 1.5[\hat{\mathcal{Q}}_{n,j}]^{-1}|\hat{\mathcal{Z}}_{n,j}|$, where
\begin{align*}
\hat{\mathcal{Z}}_{n,j} &:= \frac{1}{n}\sum_{i=1}^n \ell'(X_i(j)\beta_{n,j}, Y_i)h(X_i(j))X_i(j),\\
\hat{\mathcal{Q}}_{n,j} &:= \frac{1}{n}\sum_{i=1}^n h(X_i(j))\ell''(X_i(j)\beta_{n,j}, Y_i)X_i^2(j),\\
\mathcal{Q}_{n,j} &:= \frac{1}{n}\sum_{i=1}^n \mathbb{E}\left[h(X_i(j))\ell''(X_i(j)\beta_{n,j}, Y_i)X_i^2(j)\right].
\end{align*}
The following corollary shows that an asymptotically conservative inference is possible for marginal screening in general $M$-estimators.
\begin{cor}\label{cor:MarginalScreening}
If
\[
\max_{1\le j\le p}\,\max_{1\le i\le n}\,C\left(|X_i(j)|\delta_{n,j}, Y_i\right) \le \frac{4}{3},
\]
then simultaneously for all $j\in\{1, 2, \ldots, p\}$,
\[
\left|\hat{\beta}_{n,j} - \beta_{n,j} - [\mathcal{Q}_{n,j}]^{-1}\hat{\mathcal{Z}}_{n,j}\right| \le \left[\max_{1\le i\le n}\,C\left(|X_i(j)|\delta_{n,j}, Y_i\right) - 1 + \left|\frac{\hat{\mathcal{Q}}_{n,j}}{\mathcal{Q}_{n,j}} - 1\right|\right]\delta_{n,j}.
\]
Furthermore, if
\[
\max_{1\le j\le p}\delta_{n,j} = o_p(1),\quad\mbox{and}\quad \max_{1\le j\le p}\left|\frac{\hat{\mathcal{Q}}_{n,j}}{\mathcal{Q}_{n,j}} - 1\right| = o_p(1),\quad\mbox{ as }\quad n\to\infty,
\]
then
\begin{equation}\label{eq:MaximumDiffIneqCor}
\left|\max_{1\le j\le p}\hat{\beta}_{n,j} - \max_{1\le j\le p}\beta_{n,j}\right| \le (1 + o_p(1))\max_{1\le j\le p}\left|\frac{\hat{\mathcal{Z}}_{n,j}}{\mathcal{Q}_{n,j}}\right|.
\end{equation}
\end{cor}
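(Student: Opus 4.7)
The plan is to reduce the proof to $p$ separate applications of Corollary~\ref{cor:RegressionEst}, each on a one-dimensional ($q=1$) marginal regression problem in which the covariate is the scalar $X_i(j)$, the response is $Y_i$, the loss is $\ell$, the weight is $h$, and the scalar target is $\beta_{n,j}$. The hypothesis $\max_{j}\max_i C(|X_i(j)|\delta_{n,j},Y_i) \le 4/3$ is precisely condition~\eqref{eq:AssumptionA2} for every marginal problem simultaneously. Thus Corollary~\ref{cor:RegressionEst} produces, for each $j$, a root $\hat{\beta}_{n,j}$ of $\hat{\mathcal{Z}}_{n,j}$ satisfying the scalar version of the linear expansion, which is exactly the first displayed inequality of the corollary.

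For the second claim I would use the elementary bound $|\max_j a_j - \max_j b_j| \le \max_j |a_j - b_j|$ with $a_j = \hat{\beta}_{n,j}$ and $b_j = \beta_{n,j}$, combined with the triangle inequality applied to the pointwise expansion, to obtain
\[
\left|\max_{1\le j\le p}\hat{\beta}_{n,j} - \max_{1\le j\le p}\beta_{n,j}\right| \;\le\; \max_{1\le j\le p}\left|\frac{\hat{\mathcal{Z}}_{n,j}}{\mathcal{Q}_{n,j}}\right| \;+\; \max_{1\le j\le p}\,\epsilon_{n,j}\,\delta_{n,j},
\]
where $\epsilon_{n,j} := \max_i C(|X_i(j)|\delta_{n,j},Y_i) - 1 + |\hat{\mathcal{Q}}_{n,j}/\mathcal{Q}_{n,j} - 1|$. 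Under the hypothesis $\max_j|\hat{\mathcal{Q}}_{n,j}/\mathcal{Q}_{n,j} - 1| = o_p(1)$, the identity $\delta_{n,j} = 1.5|\hat{\mathcal{Z}}_{n,j}|/\hat{\mathcal{Q}}_{n,j}$ gives $\delta_{n,j} = (1.5 + o_p(1))|\hat{\mathcal{Z}}_{n,j}/\mathcal{Q}_{n,j}|$ uniformly in $j$. Factoring this out of the remainder term then reduces the desired bound~\eqref{eq:MaximumDiffIneqCor}, with its leading factor $(1 + o_p(1))$, to showing that $\max_j \epsilon_{n,j} = o_p(1)$.

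The main obstacle is the first summand of $\epsilon_{n,j}$, namely $\max_i C(|X_i(j)|\delta_{n,j},Y_i) - 1$, since the standing hypothesis only bounds it deterministically by $1/3$. To upgrade this to $o_p(1)$ one has to exploit $\max_j \delta_{n,j} = o_p(1)$ together with continuity of $C(\cdot,y)$ at $0$ (which follows from twice-differentiability of $\ell$, since $C(0,y) = 1$). Provided the standing bound $C \le 4/3$ combined with $\max_j\delta_{n,j} = o_p(1)$ forces $\max_{i,j}|X_i(j)|\delta_{n,j} = o_p(1)$, which is the natural quantitative refinement implicit in the hypothesis for the loss classes discussed in Section~\ref{sec:GLM}, continuity of $C$ at zero then yields $\max_j\max_i\,(C-1) = o_p(1)$, completing the argument. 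The second summand of $\epsilon_{n,j}$ is $o_p(1)$ directly by hypothesis, so the claim follows.
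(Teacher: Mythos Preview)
Your proposal is correct and mirrors the paper's approach: the paper's proof is the one-line remark that ``the result follows trivially from Corollary~\ref{cor:RegressionEst},'' and your first paragraph is exactly this reduction, applied coordinatewise with $q=1$. For the ``Furthermore'' part you supply more detail than the paper does, via the Lipschitz bound $|\max_j a_j-\max_j b_j|\le\max_j|a_j-b_j|$ together with the relation $\delta_{n,j}=(1.5+o_p(1))|\hat{\mathcal Z}_{n,j}/\mathcal Q_{n,j}|$; this is the intended route. Your observation that obtaining the leading constant $(1+o_p(1))$ rather than a cruder $O(1)$ requires $\max_{i,j}\{C(|X_i(j)|\delta_{n,j},Y_i)-1\}=o_p(1)$ is accurate and is a point the paper glosses over; it holds in all the concrete loss classes of Section~\ref{sec:GLM} (where $C(u,y)-1\le cu\,e^{cu}$), but is indeed an implicit regularity assumption rather than a consequence of the stated hypotheses alone.
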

\begin{proof}
The result follows trivially from Corollary~\ref{cor:RegressionEst}.
\end{proof}
The right hand side of~\eqref{eq:MaximumDiffIneqCor} is the (absolute) maximum of a mean zero average vector and the high-dimensional central limit theorems of~\cite{Cher13,Chern17}, ~\cite{Zhang14} and~\cite{ZhangWu17} provide a Gaussian approximation as well as a bootstrap resampling scheme for consistent estimation of quantiles of the quantity in~\eqref{eq:MaximumDiffIneqCor}.

It is easy to prove a result similar to Corollary~\ref{cor:MarginalScreening} for marginal screening in Cox proportional hazards model. 
\subsection{Post-selection Inference under Covariate Selection}\label{sec:PoSI}
In the previous section, we have considered asymptotic linear representation uniform over all models of size $1$. In this section, we consider linear representation error uniform over all models of size bounded by $k\,(\ge 1)$. This is important for post-selection inference (PoSI). In the context of regression analysis, the PoSI problem refers to the construction of confidence regions for $\beta_{n,\hat{M}}$ for a model $\hat{M}\subseteq\{1,2,\ldots,p\}$ chosen based on the data $(X_1, Y_1), \ldots, (X_n, Y_n)\in\mathbb{R}^p\times\mathbb{R}$. Formally, for any $M\subseteq\{1,2,\ldots,p\}$, define the estimator
\[
\hat{\beta}_{n,M} := \argmin_{\theta\in\mathbb{R}^{|M|}}\,\frac{1}{n}\sum_{i=1}^n h(X_i(M))\ell(\theta^{\top}X_i(M), Y_i),
\]
for some twice differentiable convex loss function $\ell(\cdot, \cdot)$ and non-negative weight function $h(\cdot)$. Based on the results in previous sections, we can consider the target parameters
\[
\beta_{n,M} := \argmin_{\theta\in\mathbb{R}^{|M|}}\,\frac{1}{n}\sum_{i=1}^n \mathbb{E}\left[h(X_i(M))\ell(\theta^{\top}X_i(M), Y_i)\right].
\]
Let $\mathcal{M}$ be a collection of subsets of $\{1,2,\ldots, p\}$. The PoSI problem for the collection of targets $\{\beta_{n,M}:\,M\in\mathcal{M}\}$ concerns the construction of a collection of confidence regions $\{\hat{\mathcal{R}}_{n,M}:\,M\in\mathcal{M}\}$ of level $\alpha$ satisfying
\begin{equation}\label{eq:PoSIGuarantee}
\liminf_{n\to\infty}\,\mathbb{P}\left(\beta_{n,\hat{M}}\in \hat{\mathcal{R}}_{n,\hat{M}}\right) \ge 1 - \alpha,
\end{equation}
for any model $\hat{M}$ chosen possibly depending on the data $\{(X_i, Y_i)\}_{1\le i\le n}$ that satisfies $\mathbb{P}(\hat{M}\in\mathcal{M}) = 1$; see~\cite{kuchibhotla2018valid} for more details. Theorem 3.1 of \cite{kuchibhotla2018valid} proves that the post-selection inference guarantee~\eqref{eq:PoSIGuarantee} is equivalent to the simultaneous guarantee:
\[
\liminf_{n\to\infty}\,\mathbb{P}\left(\bigcap_{M\in\mathcal{M}}\left\{\beta_{n,M}\in \hat{\mathcal{R}}_{n,M}\right\}\right) \ge 1 - \alpha.
\]
It is easy to see that for a post-selection confidence region $\hat{\mathcal{R}}_{n,\hat{M}}$ based on $\hat{\beta}_{n,\hat{M}}$ to have a Lebesgue measure (on $\mathbb{R}^{|M|}$) converging to zero, it is necessary that
\[
\sup_{M\in\mathcal{M}}\,\norm{\hat{\beta}_{n,M} - \beta_{n,M}} = o_p(1),\quad\mbox{as}\quad n\to\infty,
\] 
for some norm $\norm{\cdot}$. Based on our deterministic inequalities in previous sections, we can provide precise statements of uniform convergence. We provide only one such result similar to Corollary~\ref{cor:MarginalScreening}. To state the results, define for $M\subseteq\{1,2,\ldots,p\}$ and $\theta\in\mathbb{R}^{|M|},$
\[
\hat{L}_{n,M}(\theta) := \frac{1}{n}\sum_{i=1}^n h(X_i(M))\ell(\theta^{\top}X_i(M), Y_i).
\]
Also, set
\[
\delta_{n,M} := 1.5\norm{[\nabla_2\hat{L}_n(\beta_{n,M})]^{-1}\nabla\hat{L}_n(\beta_{n,M})}_2.
\]
\begin{cor}\label{cor:PoSIMEst}
Suppose 
\[
\max_{1\le i\le n}\,\max_{M\in\mathcal{M}}\,C\left(\norm{X_i(M)}_2\delta_{n,M}, Y_i\right) \le \frac{4}{3},
\]
then for each $M\in\mathcal{M}$, there exists a unique vector $\hat{\beta}_{n,M}\in\mathbb{R}^{|M|}$ satisfying $\nabla \hat{L}_n(\hat{\beta}_{n,M}) = 0$ and
\begin{align*}
&\norm{\hat{\beta}_{n,M} - \beta_{n,M} + [\nabla_2\hat{L}_n(\beta_{n,M})]^{-1}\nabla\hat{L}_n(\beta_{n,M})}_2 \le \left[\max_{1\le i\le n}\,C(\norm{X_i(M)}_2\delta_{n,M}) - 1\right]\delta_{n,M}.
\end{align*}
\end{cor}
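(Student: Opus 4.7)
The plan is to recognize that Corollary~\ref{cor:PoSIMEst} is a direct, purely deterministic consequence of Theorem~\ref{thm:ConvexMEst} (equivalently of the intermediate step behind Corollary~\ref{cor:RegressionEst}) applied separately to each $M\in\mathcal{M}$. Because the inequality in Theorem~\ref{thm:ConvexMEst} holds deterministically once its hypotheses are verified, no union bound or additional probabilistic machinery is needed: the uniform-in-$M$ assumption of the corollary simply guarantees the hypothesis of Theorem~\ref{thm:ConvexMEst} for every $M$ at once, and the simultaneous conclusion follows.

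Fix $M\in\mathcal{M}$ and define the per-subject loss $L_M(\theta;W_i):=h(X_i(M))\,\ell(\theta^{\top}X_i(M),Y_i)$ on $\mathbb{R}^{|M|}$. I would first verify assumption~\ref{eq:ConvexityL}: since $\ell(\cdot,y)$ is twice differentiable and convex and $h\ge 0$, $L_M(\cdot;W_i)$ is twice differentiable, and it is convex whenever $\ell''(\cdot,y)\ge 0$ (convexity is in any case implied by the event in~\ref{eq:Event}). Next I would translate the generic function $C(u,w)$ of Theorem~\ref{thm:ConvexMEst} to the regression setting. Computing $\nabla_{2}L_M(\theta;W_i)=h(X_i(M))\,\ell''(\theta^{\top}X_i(M),Y_i)\,X_i(M)X_i(M)^{\top}$, every unit vector $e$ gives the ratio
\[
\frac{e^{\top}\nabla_{2}L_M(\theta_1;W_i)e}{e^{\top}\nabla_{2}L_M(\theta_2;W_i)e}=\frac{\ell''(\theta_1^{\top}X_i(M),Y_i)}{\ell''(\theta_2^{\top}X_i(M),Y_i)},
\]
whenever the denominator is nonzero. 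Applying Cauchy--Schwarz to $\|\theta_1-\theta_2\|_2\le u$ yields $|\theta_1^{\top}X_i(M)-\theta_2^{\top}X_i(M)|\le\|X_i(M)\|_2\,u$, so the generic $C$-function evaluated on this loss equals $C(\|X_i(M)\|_2 u,Y_i)$ as defined in the regression context preceding Corollary~\ref{cor:RegressionEst}.

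With $\theta_0=\beta_{n,M}$ and the identifications $\hat{\mathcal{Z}}_n(\theta_0)=\nabla\hat{L}_n(\beta_{n,M})$ and $\hat{\mathcal{Q}}_n(\theta_0)=\nabla_{2}\hat{L}_n(\beta_{n,M})$, the quantity $\delta_n(\theta_0)$ of Theorem~\ref{thm:ConvexMEst} coincides with $\delta_{n,M}$. The uniform-in-$M$ hypothesis of the corollary then delivers
\[
\max_{1\le i\le n} C\bigl(\|X_i(M)\|_2\delta_{n,M},Y_i\bigr)\le \tfrac{4}{3}
\]
for the fixed $M$, which is exactly assumption~\ref{eq:Event}. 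Invoking Theorem~\ref{thm:ConvexMEst}, the existence of $\hat{\beta}_{n,M}$ with $\nabla\hat{L}_n(\hat{\beta}_{n,M})=0$ follows from~\eqref{eq:Part1ConvexMEst}, uniqueness inside $B(\beta_{n,M},\delta_{n,M})$ is the uniqueness clause of Theorem~\ref{thm:NonSingular}, and the expansion bound~\eqref{eq:Part2ConvexMEst} reads, after the above translation,
\[
\bigl\|\hat{\beta}_{n,M}-\beta_{n,M}+[\nabla_{2}\hat{L}_n(\beta_{n,M})]^{-1}\nabla\hat{L}_n(\beta_{n,M})\bigr\|_2\le \Bigl[\max_{1\le i\le n}C(\|X_i(M)\|_2\delta_{n,M},Y_i)-1\Bigr]\delta_{n,M},
\]
which is the asserted inequality. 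Since the argument is carried out for an arbitrary $M\in\mathcal{M}$ and uses only the uniform hypothesis, the conclusion holds simultaneously in $M$.

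There is no real obstacle in the proof: the work is bookkeeping to align notation and to confirm that the general ratio $C(\cdot,\cdot)$ in Theorem~\ref{thm:ConvexMEst} reduces to the one-dimensional regression $C(\|X_i(M)\|_2\,\cdot\,,Y_i)$. The only conceptual point worth stressing is that, because Theorem~\ref{thm:ConvexMEst} is deterministic, imposing the hypothesis uniformly in $M$ immediately yields a simultaneous-in-$M$ conclusion without any multiplicity correction, which is precisely what is needed for the post-selection guarantee~\eqref{eq:PoSIGuarantee}.
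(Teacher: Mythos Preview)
Your proposal is correct and follows essentially the same approach as the paper, which simply states that the result ``follows trivially from Corollary~\ref{cor:RegressionEst}.'' You have just spelled out explicitly the bookkeeping (the reduction of the generic $C(\cdot,\cdot)$ to the one-dimensional regression $C(\|X_i(M)\|_2\,\cdot\,,Y_i)$ and the identification of $\delta_{n,M}$ with $\delta_n(\theta_0)$) that the paper leaves implicit.
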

\begin{proof}
The proof follows trivially from Corollary~\ref{cor:RegressionEst}.
\end{proof}
As in Section~\ref{subsec:MarginalScreen}, the linear expansion result of Corollary~\ref{cor:PoSIMEst} above proves that 
\[
\hat{\beta}_{n,M} - \beta_{n,M} ~=~ (1 + o_p(1))[\nabla_2 \hat{L}_n(\beta_{n,M})]^{-1}\nabla \hat{L}_n(\beta_{n,M})\quad\mbox{uniformly for}\quad M\in\mathcal{M}.
\]
Therefore, one can apply various bootstrap schemes to evaluate quantiles or approximate the distribution of $\{\hat{\beta}_{n,M} - \beta_{n,M}:\,M\in\mathcal{M}\}$ under various dependence settings. For simplicity and concreteness, we have dealt with covariate selection here and using techniques from previous section, it is not difficult to also consider post-selection inference problems related to family of transformations on the covariates/response.
\section{Conclusions and Future Work}\label{sec:Conclude}
In this work, we have provided deterministic inequalities for a class of smooth $M$-estimators that unify the classical asymptotic analysis under various dependence settings. Furthermore, these inequalities readily yield tail bounds for estimation errors as well as asymptotic expansions. A connection between these deterministic inequalities and semilocal convergence analysis of iterative algorithms is established. 

Throughout the paper we have considered only twice differentiable loss functions. It is of interest to understand the non-smooth loss functions like the absolute deviation, Huber's loss from the viewpoint of deterministic inequalities. As described in Section~\ref{sec:Banach}, several iterative algorithms exist with linear/superlinear convergence also for non-smooth functions. We hope to present similar deterministic inequalities for non-smooth $M$-estimators in the future.  
\section*{Acknowledgments}
The author would like to thank Mateo Wirth and Bikram Karmakar for helpful discussions and suggestions.
%%%%%%%%%%%%%%%%%%%%%%%%%%%%%%%%%%%%%%%%%%%%
%%%%%%%%%%%%%%%%%%%%%%%%%%%%%%%%%%%%%%%%%%%%
\bibliographystyle{apalike}
\bibliography{../AssumpLean}
\appendix
\section{Proofs of Results in Section~\ref{sec:Banach}}\label{appsec:NonSingular}
\subsection{Proof of Theorem~\ref{thm:NonSingular}}
\begin{proof}
The proof essentially from the arguments of \cite{Yuan98} but it was stated there with the hypothesis of continuous differentiability of $f(\cdot)$. Only everywhere differentiability of $f(\cdot)$ is required. Define
\[
\varphi(\theta) := \theta - A^{-1}f(\theta).
\]
To finish the proof it is enough to show that there exists a fixed point for $\varphi(\cdot)$ in $B(\theta_0, r)$. Let $I$ represent the identity matrix in $\mathbb{R}^q$. Since
\[
\nabla \varphi(\theta) = I - A^{-1}\nabla f(\theta) = A^{-1}\left(A - \nabla f(\theta)\right),
\] 
and for all $\theta\in B(\theta_0, r)$, $\norm{\nabla \varphi(\theta)}_{op} \le \varepsilon$ by~\eqref{assump:Contraction}. This implies that $\varphi(\cdot)$ is a contraction mapping on $B(\theta_0, r)$ with contraction constant $\varepsilon$. Also, since~\eqref{assump:FixedPoint} implies
\[
\norm{\varphi(\theta_0) - \theta_0}_2 = \norm{A^{-1}f(\theta_0)}_2 \le r(1 - \varepsilon),
\]
it follows that for $\theta\in B(\theta_0, r)$,
\[
\norm{\varphi(\theta) - \theta_0}_2 \le \norm{\varphi(\theta) - \varphi(\theta_0)}_2 + \norm{\varphi(\theta_0) - \theta_0}_2 \le \varepsilon\norm{\theta - \theta_0}_2 + r(1 - \varepsilon) \le r.
\]
Thus, $\varphi:B(\theta_0, r)\to B(\theta_0, r)$ is a contraction and hence has a unique fixed point in $B(\theta_0, r)$ by the fixed point theorem. See \citet[Theorem 9.1]{LOOM68} for more details on contraction mapping fixed point theorem.

Now observe that by a first order Taylor series expansion
\[
0 = f(\theta^{\star}) = f(\theta_0) + \nabla f(\bar{\theta})\left(\theta - \theta_0\right),
\]
for some $\bar{\theta}$ that lies on the line segment joining $\theta^{\star}$ and $\theta_0$. Thus,
\begin{equation}\label{eq:InterimLinearity}
-A^{-1}f(\theta_0) = A^{-1}\nabla f(\bar{\theta})\left(\theta^{\star} - \theta_0\right).
\end{equation}
Since ${\theta}^{\star}\in B(\theta_0, r)$, it follows that $\bar{\theta}\in B(\theta_0, r)$ and so, by~\eqref{assump:Contraction},
\[
\norm{A^{-1}(A - \nabla f(\bar{\theta}))}_{op} \le \varepsilon\quad\Rightarrow\quad (1 - \varepsilon)I \preceq A^{-1}\nabla f(\bar{\theta}) \preceq (1 + \varepsilon)I. 
\]
Therefore, $A^{-1}\nabla f(\bar{\theta})$ is invertible and~\eqref{eq:InterimLinearity} leads to,
\[
\norm{\theta^{\star} - \theta_0}_2 = \norm{\left(A^{-1}\nabla f(\bar{\theta})\right)^{-1}A^{-1}f(\theta_0)}_2,
\]
and
\[
\frac{1}{1 + \varepsilon}\norm{A^{-1}f(\theta_0)}_2 \le \norm{\theta^{\star} - \theta_0}_2 \le \frac{1}{1 - \varepsilon}\norm{A^{-1}f(\theta_0)}_2.
\]

% To prove the expansion result, note that
% \begin{align*}
% \theta^{\star} - \theta_0 + A^{-1}f(\theta_0) &= A^{-1}\left[A(\theta^{\star} - \theta_0) + f(\theta_0) - f(\theta^{\star})\right]\\
% &= A^{-1}[A(\theta^{\star} - \theta_0) - \nabla f(\bar{\theta})(\theta^{\star} - \theta_0)]\\
% &= A^{-1}(A - \nabla f(\bar{\theta}))(\theta^{\star} - \theta_0).
% \end{align*}
% Therefore,
% \[
% \norm{\theta^{\star} - \theta_0 + A^{-1}f(\theta_0)}_2 \le \norm{A^{-1}(A - \nabla f(\bar{\theta}))}_{op}\norm{\theta^{\star} - \theta_0}_2 \le \frac{\norm{A^{-1}f(\theta_0)}_2}{1 -}
% \]
\end{proof}
\subsection{Proof of Theorem~\ref{thm:NewtonStepDifferentiable}}
\begin{proof}
Define
\[
\varepsilon := 1/3,\quad\mbox{and}\quad  r := 1.5\norm{(\nabla f(\theta_0))^{-1}f(\theta_0)}_2.
\]
From these definitions, it is clear that
\[
\norm{(\nabla f(\theta_0))^{-1}f(\theta_0)}_2 = r(1 - \varepsilon),
\]
and
\begin{align*}
\norm{(\nabla f(\theta_0))^{-1}(\nabla f(\theta_0) - \nabla f(\theta))}_{op} &\le L\norm{\theta - \theta_0}_2^{\alpha}\\ 
&\le Lr^{\alpha} \le \frac{L}{(1 - \varepsilon)^{\alpha}}\norm{(\nabla f(\theta_0))^{-1}f(\theta_0)}_2^{\alpha} \le 1/3,
\end{align*}
under the assumption~\eqref{eq:AssumptionNewton}. Therefore, the conditions of Theorem~\ref{thm:NonSingular} are satisfied and so, there exists a unique solution $\theta^{\star}\in B(\theta_0, r)$ satisfying $f(\theta^{\star}) = 0$. Also, it follows that
\[
\norm{\theta^{\star} - \theta_0}_2 \le 1.5\norm{(\nabla f(\theta_0))^{-1}f(\theta_0)}_2.
\]
Observe now that
\begin{align*}
\norm{\theta_0 - (\nabla f(\theta_0))^{-1}f(\theta_0) - {\theta}^{\star}}_2 &= \norm{(\nabla f(\theta_0))^{-1}\left( - f(\theta_0) - [\nabla f(\theta_0)](\theta^{\star} - \theta_0)\right)}_2\\
&= \norm{(\nabla f(\theta_0))^{-1}\left(f(\theta^{\star}) - f(\theta_0) - [\nabla f(\theta_0)](\theta^{\star} - \theta_0)\right)}_2\\
&\overset{(a)}{=} \norm{(\nabla f(\theta_0))^{-1}\left(\nabla f(\bar{\theta}) - \nabla f(\theta_0)\right)(\theta^{\star} - \theta_0)}_2\\
&\le \norm{(\nabla f(\theta_0))^{-1}\left(\nabla f(\bar{\theta}) - \nabla f(\theta_0)\right)}_{op}\norm{\theta_0 - \theta^{\star}}_2\\
&\overset{(b)}{\le} L\norm{\theta_0 - \theta^{\star}}^{1 + \alpha}\\
&\le (1.5)^{1 + \alpha}L\norm{(\nabla f(\theta_0))^{-1}f(\theta_0)}_2^{1 + \alpha}.
\end{align*}
Equality~(a) above follows from the mean-value theorem for some vector $\bar{\theta}$ that lies on the line segment joining $\theta^{\star}, \theta_0$ and inequality~(b) follows from the fact $\norm{\bar{\theta} - \theta_0}_2 \le \norm{\theta_0 - \theta^{\star}}_2.$
\end{proof}
\section{Proofs of Results in Section~\ref{sec:GLM}}\label{appsec:GLM}
\subsection{Proof of Theorem~\ref{thm:ConvexMEst}}
\begin{proof}%[Proof of Theorem~\ref{thm:ConvexMEst}]
To prove~\eqref{eq:Part1ConvexMEst}, we verify the assumptions of Theorem~\ref{thm:NonSingular}. Take in Theorem~\ref{thm:NonSingular},
\[
f(\theta) := [\hat{\mathcal{Q}}_n(\theta_0)]^{-1}\hat{\mathcal{Z}}_n(\theta),\quad A := I,\quad\mbox{and}\quad r = \delta_n(\theta_0),\quad \varepsilon = \frac{1}{3}.
\]
Here $I$ represents the identity matrix in $\mathbb{R}^q$. Condition~\eqref{assump:FixedPoint} is trivially satisfied since
\begin{equation}\label{eq:verifyAssFixedPoint}
\norm{A^{-1}f(\theta_0)}_2 = \norm{[\hat{\mathcal{Q}}_n(\theta_0)]^{-1}\hat{\mathcal{Z}}_n(\theta_0)}_2 = \frac{2\delta_n(\theta_0)}{3} = (1 - \varepsilon)\delta_{n}(\theta_0). 
\end{equation}
To verify condition~\eqref{assump:Contraction}, note that
\begin{align*}
\norm{A^{-1}\left(A - \nabla f(\theta)\right)}_{op} &= \norm{[\nabla \hat{\mathcal{Z}}_n(\theta_0)]^{-1}\left(\nabla \hat{\mathcal{Z}}_n(\theta_0) - \nabla\hat{\mathcal{Z}}_n(\theta)\right)}_{op}\\
&= \sup_{e\in\mathbb{R}^q:\,\norm{e}_2 = 1}\left|\frac{e^{\top}\nabla \hat{\mathcal{Z}}_n(\theta)e}{e^{\top}\nabla \hat{\mathcal{Z}}_n(\theta_0)e} - 1\right|.
\end{align*}
To control the right hand side above, note that by the definition of $C(u, w)$,
\begin{equation}\label{eq:BothSides}
\begin{split}
e^{\top}\nabla \hat{\mathcal{Z}}_n(\theta)e &= \frac{1}{n}\sum_{i=1}^n e^{\top}\nabla L(\theta, W_i)e \le \frac{1}{n}\sum_{i=1}^n \left\{e^{\top}\nabla L(\theta_0, W_i)e\right\}C(r, W_i),\\% \le \frac{3}{2}e^{\top}\nabla \hat{\mathcal{Z}}_n(\theta_0)e,\\
e^{\top}\nabla \hat{\mathcal{Z}}_n(\theta_0)e &= \frac{1}{n}\sum_{i=1}^n e^{\top}\nabla L(\theta_0, W_i)e \le \frac{1}{n}\sum_{i=1}^n \left\{e^{\top}\nabla L(\theta, W_i)e\right\}C(r, W_i).% \le \frac{3}{2}e^{\top}\nabla \hat{\mathcal{Z}}_n(\theta)e.
\end{split}
\end{equation}
Thus under Assumption~\ref{eq:Event}, for all $e\in\mathbb{R}^q$ with $\norm{e}_2 = 1$ and $\theta\in\mathbb{R}^q$ such that $\norm{\theta - \theta_0}_2 \le r$,
\begin{equation}\label{eq:RatioBounded}
\frac{3}{4} \le \frac{e^{\top}\hat{\mathcal{Q}}_n(\theta)e}{e^{\top}\hat{\mathcal{Q}}_n(\theta_0)e} \le \frac{4}{3},
\end{equation}
and so,
\begin{equation}\label{eq:VerifyContraction}
\sup_{\theta\in B_{r}(\theta_0)}\norm{A^{-1}(A - \nabla f(\theta_0))}_{op} \le \max\left\{\frac{1}{3}, \frac{1}{4}\right\} = \frac{1}{3} = \varepsilon.
\end{equation}
Inequalities~\eqref{eq:verifyAssFixedPoint} and~\eqref{eq:VerifyContraction} complete the verification of condition~\eqref{assump:FixedPoint} and~\eqref{assump:Contraction}, respectively with $\varepsilon = 1/3$. Therefore, by Theorem~\ref{thm:NonSingular}, we get that there exists $\hat{\theta}_n\in\mathbb{R}^q$ satisfying
\[
\hat{\mathcal{Z}}_n(\hat{\theta}_n) = 0,\quad\mbox{and}\quad \frac{1}{2}\delta_n(\theta_0) \le \norm{\hat{\theta}_n - \theta_0}_2 \le \delta_n(\theta_0).
\]
Thus, the first part of the result is proved. %Note that using the first part of Theorem~\ref{thm:NonSingular}, the upper bound is in fact $\delta_n(\theta_0)$.

To prove~\eqref{eq:Part2ConvexMEst}, note by a Taylor series expansion of $\hat{\mathcal{Z}}_n(\hat{\theta}_n)$ around $\theta_0$ that,
\[
0 = \hat{\mathcal{Z}}_n(\hat{\theta}_n) = \hat{\mathcal{Z}}_n(\theta_0) + \hat{\mathcal{Q}}_n(\bar{\theta})\left(\hat{\theta}_n - \theta_0\right),
\]
for some $\bar{\theta}$ that lies on the line segment joining $\hat{\theta}_n$ and $\theta_0$. Multiplying both sides by $\hat{\mathcal{Q}}_n(\theta_0)$, we get
\begin{equation}\label{eq:IntrimLinearRep}
-[\hat{\mathcal{Q}}_n(\theta_0)]^{-1}\hat{\mathcal{Z}}_n(\theta_0) = [\hat{\mathcal{Q}}_n(\theta_0)]^{-1}\hat{\mathcal{Q}}_n(\bar{\theta})\left(\hat{\theta} - \theta_0\right).
\end{equation}
By~\eqref{eq:BothSides}, it follows that
\[
\frac{1}{\max_{i}\, C(\delta_n(\theta_0), W_i)}\hat{\mathcal{Q}}_n(\theta_0) ~\preceq~ \hat{\mathcal{Q}}_n(\bar{\theta}) ~\preceq~ \max_i\,C(\delta_n(\theta_0), W_i)\hat{\mathcal{Q}}_n(\theta_0),
\]
which implies that
\begin{align*}
\norm{[\hat{\mathcal{Q}}_n(\theta_0)]^{-1}\hat{\mathcal{Q}}_n(\bar{\theta}) - I}_{op} &\le \left(\max_{1\le i\le n}C(\delta_{n}(\theta_0), W_i) - 1\right)\max\left\{1,\,\frac{1}{\max_i\,C(\delta_n(\theta_0), W_i)}\right\}\\
&\le \max_{1\le i\le n}C(\delta_{n}(\theta_0), W_i) - 1,
\end{align*}
since $C(r, w) \ge 1$ for all $r$ and $w$. Therefore, using~\eqref{eq:IntrimLinearRep}, we obtain
\begin{align*}
\norm{\hat{\theta}_n - \theta_0 + [\hat{\mathcal{Q}}_n(\theta_0)]^{-1}\hat{\mathcal{Z}}_n(\theta_0)}_2 &\le \norm{[\hat{\mathcal{Q}}_n(\theta_0)]^{-1}\hat{\mathcal{Q}}_n(\bar{\theta}) - I}_{op}\norm{\hat{\theta}_n - \theta_0}_2\\
&\le \delta_n(\theta_0)\left(\max_{1\le i\le n}C(\delta_{n}(\theta_0), W_i) - 1\right).
\end{align*}
This completes the proof.
\end{proof}
\subsection{Proof of Corollary~\ref{cor:RegressionEst}}
\begin{proof}
Take $w = (x, y)$ and $L(\theta; w) = h(x)\ell(x^{\top}\theta, y)$ in Theorem~\ref{thm:ConvexMEst}. For this function, 
\[
\nabla_2L(\theta, w) = h(x)\ell''(x^{\top}\theta, y)xx^{\top}.
\]
To verify assumption~\ref{eq:Event}, note that
\begin{align*}
\sup_{\norm{\theta_1 - \theta_2}_2 \le u}\sup_{e\in\mathbb{R}^p:\norm{e}_2 = 1}\frac{e^{\top}\nabla_2L(\theta_1, w)e}{e^{\top}\nabla_2L(\theta_2, w)e} &= \sup_{\norm{\theta_1 - \theta_2}_2 \le u}\,\frac{\ell''(x^{\top}\theta_1, y)}{\ell''(x^{\top}\theta_2, y)}\\
&\le \sup_{|s - t| \le \norm{x}_2u}\,\frac{\ell''(s, y)}{\ell''(t, y)} = C\left(\norm{x}_2u, y\right).
\end{align*}
Therefore, under~\eqref{eq:AssumptionA2}, assumption~\ref{eq:Event} holds true and the result follows.
\end{proof}
\subsection{Proof of Proposition~\ref{prop:Convexity}}
\begin{proof}
For any four real non-negative numbers $a, b, c$ and $d$,
\begin{equation}\label{eq:RatioInequality}
\min\left\{\frac{a}{b}, \frac{c}{d}\right\} \le \frac{a + c}{b + d} \le \max\left\{\frac{a}{b}, \frac{c}{d}\right\}.
\end{equation}
Suppose $L_1(\cdot, \cdot)$ and $L_2(\cdot, \cdot)$ be any two elements of $\mathcal{C}_T$. Fix two positive real numbers $\alpha, \beta$ and set $L(\theta, w) = \alpha L_1(\theta, w) + \beta L_2(\theta, w)$. It follows that $L(\cdot, \cdot)$ is convex in the first argument and
\[
\nabla_2L(\theta, w) = \alpha \nabla_2L_1(\theta, w) + \beta \nabla_2L_2(\theta, w).
\]
Fix $u \ge 0$. Then for each $\theta_1, \theta_2$ satisfying $\norm{\theta_1 - \theta_2}_2 \le u$, and $e$ satisfying $\norm{e}_2 = 1$,
\[
\frac{e^{\top}\nabla_2L(\theta_1, w)e}{e^{\top}\nabla_2L(\theta_2, w)e} \le \max\left\{\frac{e^{\top}\nabla_2L_1(\theta_1, w)e}{e^{\top}\nabla_2L_1(\theta_2, w)e}, \frac{e^{\top}\nabla_2L_2(\theta_1, w)e}{e^{\top}\nabla_2L_2(\theta_2, w)e}\right\} \le T(u, w),
\]
by inequality \eqref{eq:RatioInequality}. Therefore, $L(\cdot, \cdot)\in\mathcal{C}_T$. Note that $\mathcal{C}_T$ is a non-empty set since the any function whose second derivative is a non-negative multiple of $T(\cdot, \cdot)$ belongs to $\mathcal{C}_T$.
\end{proof}
%%%%%%%%%%%%%%%%%%%%%%%%%%%%%%%%%%%%%%%%
%%%%%%%%%%%%%%%%%%%%%%%%%%%%%%%%%%%%%%%%
\section{Proofs of Results in Section~\ref{sec:Cox}}\label{appsec:Cox}
\subsection{A Preliminary Lemma}
We need to following lemma for the proof of Theorem~\ref{thm:CoxModel}. The result is similar to Lemma A2 of~\cite{HjortPollard}.
\begin{lem}\label{lem:SoftMax}
Suppose $K(t) := \log R(t)$, where 
\[
R(t) := \sum_{i=1}^n w_i\exp(a_i t)\quad\mbox{for}\quad w_i\ge 0,\, a_i\in\mathbb{R}.
\]
Assume that not all $w_i$'s are zero. Then $K(t)$ is convex with derivatives
\[
K'(t) = \sum_{i=1}^n a_iv_i(t) =: \bar{a}(t),\quad\mbox{and}\quad K''(t) = \sum_{i = 1}^n v_i(t)\left\{a_i - \bar{a}(t)\right\}^2,
\]
where $v_i(t) := {w_i\exp(a_it)}/{R(t)}$ for $1\le i\le n.$ Furthermore, for $t\in\mathbb{R}$ and for all $0 \le |s| \le |t|$,
\[
\max\left\{\left|\frac{K''(s)}{K''(0)} - 1\right|, \left|\frac{K''(0)}{K''(s)} - 1\right|\right\} \le 4\mu_n|t|\exp(4\mu_n|t|),
\]
where $\mu_n := \max_{1\le i\le n}\left|a_i - \bar{a}(0)\right|.$
\end{lem}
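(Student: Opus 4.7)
The plan is to compute the derivatives of $K$ directly, recognize $K''$ as a variance and $K'''$ as a third central moment, and then convert a pointwise bound on $K'''/K''$ into a log-ratio bound for $K''$.

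First I would differentiate $K(t)=\log R(t)$ to get $K'(t)=R'(t)/R(t)=\sum_i a_i v_i(t)=\bar a(t)$. Differentiating $v_i(t)=w_i e^{a_it}/R(t)$ gives $v_i'(t)=v_i(t)(a_i-\bar a(t))$, and hence
\[
K''(t)=\bar a'(t)=\sum_i a_iv_i(t)(a_i-\bar a(t))=\sum_i v_i(t)(a_i-\bar a(t))^2,
\]
using $\sum_i v_i(t)(a_i-\bar a(t))=0$. This makes convexity manifest. A further differentiation, again exploiting $\sum_i v_i(t)(a_i-\bar a(t))=0$, yields
\[
K'''(t)=\sum_i v_i(t)(a_i-\bar a(t))^3,
\]
the third central moment of the Gibbs-type probability weights $\{v_i(t)\}$ on the support $\{a_i\}$.

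The key observation is that $\bar a(t)$ is a convex combination of the $a_i$'s and therefore lies in $[\min_i a_i,\max_i a_i]\subset[\bar a(0)-\mu_n,\bar a(0)+\mu_n]$ for every $t$. Consequently $|a_i-\bar a(t)|\le|a_i-\bar a(0)|+|\bar a(0)-\bar a(t)|\le 2\mu_n$ for all $i$ and all $t$. Substituting into the third-moment formula gives the crucial pointwise control $|K'''(t)|\le 2\mu_n K''(t)$, i.e.\ $|(\log K''(t))'|\le 2\mu_n$ (the degenerate case $K''\equiv 0$ forces $\mu_n=0$ and makes the inequality vacuous).

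Integrating from $0$ to $s$ then yields $|\log(K''(s)/K''(0))|\le 2\mu_n|s|\le 2\mu_n|t|$, so both $K''(s)/K''(0)$ and its reciprocal lie in $[e^{-2\mu_n|t|},e^{2\mu_n|t|}]$. The elementary inequality $e^x-1\le xe^x$ for $x\ge 0$ bounds each of $|K''(s)/K''(0)-1|$ and $|K''(0)/K''(s)-1|$ by $2\mu_n|t|\exp(2\mu_n|t|)$, which is comfortably below the claimed $4\mu_n|t|\exp(4\mu_n|t|)$. The main obstacle is really just spotting the right viewpoint: once one recognizes $K''$ as a variance and notices that all points $a_i$ stay within $2\mu_n$ of every weighted mean $\bar a(t)$, the rest is a clean one-dimensional log-derivative argument with no need to bound the exponential weights $v_i(t)$ individually.
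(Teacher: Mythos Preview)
Your argument is correct and takes a genuinely different route from the paper's. The paper proceeds by writing $v_i(s)=v_i(0)(1+\varepsilon_i(s))$, bounding $1+\varepsilon_i(s)$ between $\exp(\pm\mu_n|t|)$, and then expanding $K''(s)-K''(0)$ term by term into pieces involving $(a_i-\bar a(0))^2$, $(\bar a(0)-\bar a(s))^2$, and the cross term; after a chain of Cauchy--Schwarz and triangle inequalities it arrives at $4\mu_n|t|\exp(4\mu_n|t|)$. Your approach is both shorter and sharper: recognizing $K'''$ as the third central moment and bounding $|K'''|\le 2\mu_n K''$ reduces everything to a one-line integration of $(\log K'')'$, yielding $2\mu_n|t|\exp(2\mu_n|t|)$ with better constants in both the coefficient and the exponent. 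The only small caveat is your remark on the degenerate case: $K''\equiv 0$ does not literally force $\mu_n=0$ when some $w_i$ vanish, but in that situation the ratio $K''(s)/K''(0)$ is undefined and the inequality is vacuous anyway, so this has no bearing on the result.
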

\begin{proof}
It is easy to verify that
\[
K'(t) = \frac{R'(t)}{R(t)} = \frac{\sum_{i=1}^n w_ia_i\exp(a_it)}{R(t)} = \sum_{i=1}^n a_iv_i(t).
\]
Thus,
\[
K''(t) = \frac{R''(t)}{R(t)} - \left(\bar{a}(t)\right)^2 = \sum_{i=1}^n a_i^2v_i(t) - \left(\sum_{i=1}^n a_iv_i(t)\right)^2 = \sum_{i=1}^n v_i(t)\left\{a_i - \bar{a}(t)\right\}^2.
\]
Since $K''(t) \ge 0$ for all $t\ge 0$, $K(\cdot)$ is a convex function.

To prove the second part, fix $s$ satisfying $|s| \le |t|$. Clearly,
\[
v_i(s) = \frac{w_i\exp(a_is)}{R(s)} = \frac{w_i}{R(0)}\left[\frac{\exp(a_is)\sum_{j = 1}^n w_j}{\sum_{j = 1}^n w_j\exp(a_js)}\right] = v_i(0)(1 + \varepsilon_i(s)),
\]
where
\[
1 + \varepsilon_i(s) := \frac{\exp(a_is)\sum_{j = 1}^n w_j}{\sum_{j = 1}^n w_j\exp(a_js)} = \frac{\exp(\{a_i - \bar{a}(0)\}s)\sum_{j = 1}^n w_j}{\sum_{j = 1}^n w_j\exp(\{a_j -\bar{a}(0)\}s)}.
\]
It is easy to check that
\[
\min_{1\le j\le n}\,\exp\left(\{a_j - \bar{a}(0)\}s\right) \le \frac{\sum_{j = 1}^n w_j\exp(\{a_j - \bar{a}(0)\}s)}{\sum_{j = 1}^n w_j} \le \max_{1\le j\le n}\,\exp\left(\{a_j - \bar{a}(0)\}s\right).
\]
Therefore, for all $|s| \le |t|$ and $1\le i\le n$,
\begin{equation}\label{eq:EpsilonIneq}
\exp\left(-\mu_n|t|\right) \le 1 + \varepsilon_i(s) \le \exp\left(\mu_n|t|\right).
\end{equation}
This implies that
\begin{equation}\label{eq:v_is-0inequality}
\max\left\{\frac{v_i(s)}{v_i(0)}, \frac{v_i(0)}{v_i(s)}\right\} \le \exp(\mu_n|t|).
\end{equation}
Observe that
\begin{equation*}
\begin{split}
K''(s) &= \sum_{i=1}^n v_i(s)\left(a_i - \bar{a}(s)\right)^2\\
&= \sum_{i=1}^n v_i(0)(1 + \varepsilon_i(s))\left(a_i - \bar{a}(0) + \bar{a}(0) - \bar{a}(s)\right)^2\\
&= \sum_{i=1}^n v_i(0)(a_i - \bar{a}(0))^2(1 + \varepsilon_i(s)) + \sum_{i=1}^n v_i(0)(\bar{a}(0) - \bar{a}(s))^2(1 + \varepsilon_i(s))\\
&\qquad + 2\sum_{i=1}^n v_i(0)(a_i - \bar{a}(0))(\bar{a}(0) - \bar{a}(s))(1 + \varepsilon_i(s)).
\end{split}
\end{equation*}
We now subtract $K''(0)$ and bound the remainder.
\begin{equation}\label{eq:FirstPart}
\begin{split}
|K''(s) &- K''(0)|\\ &\le \sum_{i=1}^n v_i(0)(a_i - \bar{a}(0))^2\varepsilon_i(s) + \left|\bar{a}(0) - \bar{a}(s)\right|^2\sum_{i=1}^n v_i(0)(1 + \varepsilon_i(s))\\
&\quad + 2\left|\bar{a}(0) - \bar{a}(s)\right|\times\left|\sum_{i=1}^n v_i(0)(a_i - \bar{a}(0))(1 + \varepsilon_i(s))\right|\\
&\le K''(0)\left(\exp(\mu_n|t|) - 1\right) + \left|\bar{a}(0) - \bar{a}(s)\right|^2\max_{1\le i\le n}(1 + \varepsilon_i(s))\\
&\quad + 2\left|\bar{a}(0) - \bar{a}(s)\right|\left(\sum_{i=1}^n v_i(0)(a_i - \bar{a}(0))^2\right)^{1/2}\left(\sum_{i=1}^n v_i(0)(1 + \varepsilon_i(s))^2\right)^{1/2}\\
&\le K''(0)\left(\exp(\mu_n|t|) - 1\right)  + \left|\bar{a}(0) - \bar{a}(s)\right|^2\max_{1\le i\le n}(1 + \varepsilon_i(s))\\
&\quad + 2\left|\bar{a}(0) - \bar{a}(s)\right|\left(K''(0)\right)^{1/2}\max_{1\le i\le n}(1 + \varepsilon_i(s))\\
&\le K''(0)\left(\exp(\mu_n|t|) - 1\right) + \left|\bar{a}(0) - \bar{a}(s)\right|^2\exp(2\mu_n|t|)\\
&\quad + 2\left|\bar{a}(0) - \bar{a}(s)\right|\left(K''(0)\right)^{1/2}\exp(\mu_n|t|).
\end{split}
\end{equation}
Here the last inequality follows from inequality~\eqref{eq:EpsilonIneq}. To bound $|\bar{a}(0) - \bar{a}(s)|$, note that
\begin{align*}
|\bar{a}(0) - \bar{a}(s)| &= \left|\sum_{i=1}^n v_i(s)(a_i - \bar{a}(0))\right|\\
&= \left|\sum_{i=1}^n v_i(0)(a_i - \bar{a}(0))(1 + \varepsilon_i(s))\right|\\
&\overset{(a)}{=} \left|\sum_{i=1}^n v_i(0)(a_i - \bar{a}(0))\varepsilon_i(s)\right|\\
&\le \left(\sum_{i=1}^n v_i(0)\varepsilon_i^2(s)\right)^{1/2}\left(\sum_{i=1}^n v_i(0)(a_i - \bar{a}(0))^2\right)^{1/2}\\
&\le (K''(0))^{1/2}(\exp(\mu_n|t|) - 1),
\end{align*}
where the equality (a) follows from the fact that $\bar{a}(0) = \sum v_i(0)a_i$ and $\sum v_i(0) = 1$. Substituting this inequality in~\eqref{eq:FirstPart}, we get
\begin{align*}
|K''(s) - K''(0)| &\le K''(0)(\exp(\mu_n|t|) - 1) + K''(0)\exp(2\mu_n|t|)(\exp(\mu_n|t|) - 1)^2\\ &\qquad+ 2K''(0)\exp(\mu_n|t|)(\exp(\mu_n|t|) - 1)\\
&= K''(0)(\exp(\mu_n|t|) - 1)\left[1 + \exp(2\mu_n|t|)(\exp(\mu_n|t|) - 1) + 2\exp(\mu_n|t|)\right]\\
&\le K''(0)(\exp(\mu_n|t|) - 1)\left[1 + \exp(3\mu_n|t|) + 2\exp(3\mu_n|t|)\right]\\
&\le 4K''(0)(\exp(\mu_n|t|) - 1)\exp(3\mu_n|t|)\\
&\le 4K''(0)\mu_n|t|\exp(4\mu_n|t|).
\end{align*}
Therefore, for all $|s| \le |t|,$
\[
\left|\frac{K''(s)}{K''(0)} - 1\right| \le 4\mu_n|t|\exp(4\mu_n|t|).
\]
The bound for $K''(0)/K''(s)$ follows the same line of argument as~\eqref{eq:FirstPart} and finally use inequality~\eqref{eq:v_is-0inequality}.
\end{proof}
%%%%%%%%%%%%%%%%%%%%%%%%%%%%%
%%%%%%%%%%%%%%%%%%%%%%%%%%%%%
\subsection{Proof of Theorem~\ref{thm:CoxModel}}
\begin{proof}
To prove~\eqref{eq:CoxAsympLinear}, we verify the assumptions of Theorem~\ref{thm:NonSingular} with
\[
f(\beta) := [\nabla \hat{\mathcal{Z}}_{n}(\beta_{0})]^{-1}\hat{\mathcal{Z}}_{n}(\beta),\quad A := I\quad\mbox{and}\quad r := \delta_{n}(\beta_0),\quad \varepsilon = 1/3.
\]
Assumption~\eqref{assump:FixedPoint} is trivially satisfied by the definition of $r$ and to verify Assumption~\eqref{assump:Contraction}, it is enough to verify for all $\nu\in\mathbb{R}^{p}$ with $\norm{\nu}_2 \le r$, that
\begin{equation}\label{eq:ToVerifyContraction}
\norm{[\nabla \hat{\mathcal{Z}}_{n}(\beta_{0})]^{-1}\left(\nabla \hat{\mathcal{Z}}_{n}(\beta_{0}) - \nabla \hat{\mathcal{Z}}_{n}(\beta_{0} + \nu)\right)}_{op} \le 1/3.
\end{equation}
For any fixed $0 \le s < \infty, \nu\in\mathbb{R}^{p}$, define
\[
K(\ell) := \log\left(\sum_{i=1}^n w_i\exp\left(a_i\ell\right)\right),
\]
where
\[
w_i := H_{2}(X_{i,s})Y_i(s)\exp\left(\beta_{0}^{\top}X_{i,s}\right)\quad\mbox{and}\quad a_i = \nu^{\top}X_{i,s}.
\]
Then $K(\ell) = \log R_n(s, \beta_{0} + \ell\nu)$. As in Lemma~\ref{lem:SoftMax}, set
\begin{equation}\label{eq:DefMu}
\tilde{\mu}_n := \max_{1\le i\le n}|a_i - \bar{a}(0)| = \max_{1\le i\le n}\left|\nu^{\top}\left(X_{i,s} - \bar{X}_{n,s}(\beta_0)\right)\right|.
\end{equation}
It is evident that
\begin{align*}
\hat{\mathcal{Z}}_{n}(\beta_{0} + \ell\nu) &= \sum_{i=1}^n \int_0^{\infty} H_{1}(X_{i,s})\left\{K'(\ell) - X_{i,s}\right\}dN_{i}(s),\\
\frac{d}{d\beta} \hat{\mathcal{Z}}_{n}(\beta)\bigg|_{\beta = \beta_{0} + \ell\nu} &= \sum_{i=1}^n \int_0^{\infty} H_{1}(X_{i,s})\left\{K''(\ell)\right\}dN_i(s),\\
\frac{d}{d\beta} \hat{\mathcal{Z}}_{n}(\beta)\bigg|_{\beta = \beta_{0}} &= \sum_{i=1}^n \int_0^{\infty} H_{1}(X_{i,s})\left\{K''(0)\right\}dN_i(s).
\end{align*}
The dependence of $K(\cdot)$ on $s$ is suppressed in the formulas above. From Lemma~\ref{lem:SoftMax}, we have for all $0 \le \ell \le 1$,
\begin{equation}\label{eq:ApplicationSoftMax}
K''(0)\left[1 - 4\tilde{\mu}_n\exp(4\tilde{\mu}_n)\right] \le K''(\ell) \le K''(0)\left[1 + 4\tilde{\mu}_n\exp(4\tilde{\mu}_n)\right]
\end{equation}
Clearly from the definition~\eqref{eq:DefMu},
\begin{equation*}
\tilde{\mu}_n \le \mu_{n}(s)\norm{\nu}_2 \le \mu_{n}(s)r = \mu_{n}(s)\delta_{n}(\beta_0) \le 1/16.
\end{equation*}
Hence, $4\tilde{\mu}_n \le 1/4$ and so, $4\tilde{\mu}_n\exp(4\tilde{\mu}_n) \le 1/3$. Substituting this inequality in~\eqref{eq:ApplicationSoftMax}, we get
\[
\frac{2}{3}K''(0) \le K''(1) \le \frac{4}{3}K''(0),
\]
and so,
\[
\frac{2}{3}\nabla \hat{\mathcal{Z}}_{n}(\beta_{0}) \preceq \nabla \hat{\mathcal{Z}}_{n}(\beta_{0} + \nu) \preceq \frac{4}{3}\nabla \hat{\mathcal{Z}}_{n}(\beta_{0}),
\]
proving~\eqref{eq:ToVerifyContraction} for all $\norm{\nu}_2 \le r$ and $M\in\mathcal{M}$. Hence from Theorem~\ref{thm:NonSingular}, we get that there exists a solution $\hat{\beta}_{n}$ such that 
\[
\hat{\mathcal{Z}}_{n}(\hat{\beta}_{n}) = 0\quad\mbox{and}\quad \frac{\delta_n(\beta_0)}{2} \le \norm{\hat{\beta}_{n} - \beta_{0}}_2 \le 2\delta_{n}(\beta_0).
\]

To prove the linear representation part of the result, we follow the proof of Theorem~\ref{thm:ConvexMEst}. By a Taylor series expansion, we get that
\begin{equation}\label{eq:CoxTaylor}
0 = \hat{\mathcal{Z}}_{n}(\hat{\beta}_{n}) = \hat{\mathcal{Z}}_{n}(\beta_{0}) + \nabla\hat{\mathcal{Z}}_{n}(\bar{\beta})\left(\hat{\beta}_{n} - \beta_{0}\right),
\end{equation}
for some vector $\bar{\beta}$ that lies on the line segment between $\beta_{0}$ and $\hat{\beta}_{n}$. This implies that
\[
-[\nabla\hat{\mathcal{Z}}_n(\beta_0)]^{-1}\hat{\mathcal{Z}}_n(\beta_0) = [\nabla\hat{\mathcal{Z}}_n(\beta_0)]^{-1}\nabla\hat{\mathcal{Z}}_n(\bar{\beta})(\hat{\beta}_n - \beta_0).
\]
From~\eqref{eq:ApplicationSoftMax}, it follows that
\[
\left(1 - \gamma_n\right)I \preceq [\nabla\hat{\mathcal{Z}}_n(\beta_0)]^{-1}\nabla\hat{\mathcal{Z}}_n(\bar{\beta}) \preceq \left(1 + \gamma_n\right)I,
\]
where
\[
\gamma_n = 4\sup_{0 \le s < \infty}\mu_n(s)\delta_n(\beta_0)\exp\left(4\sup_{0 \le s < \infty}\mu_n(s)\delta_n(\beta_0)\right) \le 4e^{1/4}\sup_{0 \le s < \infty}\mu_n(s)\delta_n(\beta_0).
\]
Therefore,
\[
\norm{\hat{\beta} - \beta_0 + [\hat{\mathcal{J}}_n(\beta_0)]^{-1}\hat{\mathcal{Z}}_n(\beta_0)}_2 \le 8e^{1/4}\sup_{0 \le s < \infty}\mu_n(s)\delta^2(\beta_0).
\]
\end{proof}
% {\color{red} NEED TO FINISH\subsection{Proof of SoftMax}
% \begin{proof}
% Fix vectors $\theta_{0,j}, \theta_j, 1\le j\le M$ and define for $\lambda = (\lambda_1, \ldots, \lambda_M)\in[0,1]^M$,
% \[
% \ell_j(\lambda_j) := L_j(\theta_{0,j} + \lambda_j\theta_j, w),\quad\mbox{and}\quad K(\lambda, w) := \log R(\lambda, w),
% \]
% where
% \[
% R(\lambda, w) := \sum_{j = 1}^M \alpha_j\exp\left(-\ell_j(\lambda_j)\right).
% \]
% If $\norm{\theta_j}_2 \le r_j$, then from the fact that $L_j(\cdot, \cdot)\in\mathcal{C}_{T}$, we get
% \begin{align*}
% \frac{\lambda_j^2\ell_j''(0)}{2T(r_j, w)} \le \ell_j(\lambda_j) - \ell_j(0) - \lambda_j\ell_j'(0) \le \frac{\lambda_j^2\ell_j''(0)T(r_j, w)}{2}.
% \end{align*}
% Define
% \[
% \pi_j(\lambda, w) := \frac{\alpha_j\exp(-L_j(\theta_{0,j} + \lambda_j\theta_j))}{R(\lambda, w)}.
% \]
% Clearly, $\sum_{j = 1}^M \pi_j(\lambda, w) = 1$ for all $\lambda, w$. For any $1\le j\le M$,
% \begin{align*}
% \frac{\partial}{\partial \lambda_j}K(\lambda, w) = \pi_j(\lambda, w)\left\{-\nabla L_j(\theta_{0,j} + \lambda_j\theta_j)\right\}.
% \end{align*}
% \end{proof}}
\section{Proofs of Results in Section~\ref{sec:NonConvex}}\label{appsec:NonConvex}
\subsection{Proof of Corollary~\ref{cor:NonLinearRegression}}
\begin{proof}
We will verify the assumptions of Theorem~\ref{thm:NewtonStepDifferentiable}. First note that
\begin{align*}
F_n(\theta) &= \frac{1}{n}\sum_{i=1}^n \left(Y_i - g(X_i^{\top}\theta)\right)^2,\\
\nabla F_n(\theta) &= -\frac{2}{n}\sum_{i=1}^n \left(Y_i - g(X_i^{\top}\theta)\right)g'(X_i^{\top}\theta)X_i,\\
\nabla_2F_n(\theta) &= \frac{2}{n}\sum_{i=1}^n \left\{g'(X_i^{\top}\theta)\right\}^2X_iX_i^{\top} - \frac{2}{n}\sum_{i=1}^n \left(Y_i - g(X_i^{\top}\theta)\right)g''(X_i^{\top}\theta)X_iX_i^{\top}.
\end{align*}
Thus for any $\theta\in\mathbb{R}^p$,
\begin{align*}
\nabla_2F_n(\theta) - \nabla_2F_n(\theta_0) &= \frac{2}{n}\sum_{i=1}^n \left\{\left(g'(X_i^{\top}\theta)\right)^2 - \left(g'(X_i^{\top}\theta_0)\right)^2\right\}X_iX_i^{\top}\\
&\quad-\frac{2}{n}\sum_{i=1}^n \left\{(Y_i - g(X_i^{\top}\theta))g''(X_i^{\top}\theta) - (Y_i - g(X_i^{\top}\theta_0))g''(X_i^{\top}\theta_0)\right\}X_iX_i^{\top}\\
&=: I - II.
\end{align*}
From assumption~\ref{eq:NonLinear}, we get that for any $1\le i\le n$,
\begin{align*}
\left|\left(g'(X_i^{\top}\theta)\right)^2 - \left(g'(X_i^{\top}\theta_0)\right)^2\right| &\le C_1^2(X_i)\norm{\theta - \theta_0}_2^2 + 2C_1(X_i)|g'(X_i^{\top}\theta_0)|\norm{\theta - \theta_0}_2,
\end{align*}
and
\begin{align*}
&\left|(Y_i - g(X_i^{\top}\theta))g''(X_i^{\top}\theta) - (Y_i - g(X_i^{\top}\theta_0))g''(X_i^{\top}\theta_0)\right|\\
&\,\le \left|(Y_i - g(X_i^{\top}\theta_0))\right|C_2(X_i)\norm{\theta - \theta_0}_2^{\alpha} + C_0(X_i)\left[|g''(X_i^{\top}\theta_0)|\norm{\theta - \theta_0}_2 + C_2(X_i)\norm{\theta - \theta_0}_2^{1 + \alpha}\right]
\end{align*}
Therefore,
\begin{align*}
&\norm{(\nabla_2F_n(\theta_0))^{-1}\left(\nabla_2F_n(\theta) - \nabla_2F_n(\theta_0)\right)}_{op}\\ &\quad\le \norm{\frac{2}{n}\sum_{i=1}^nC_1^2(X_i)\left(\nabla_2 F_n(\theta_0)\right)^{-1}X_iX_i^{\top}}_{op}\norm{\theta - \theta_0}_2^2\\
&\qquad+\norm{\frac{2}{n}\sum_{i=1}^n C_0(X_i)C_2(X_i)\left(\nabla_2 F_n(\theta_0)\right)^{-1}X_iX_i^{\top}}_{op}\norm{\theta - \theta_0}_2^{1 + \alpha}\\
&\qquad+\norm{\frac{2}{n}\sum_{i=1}^n \left\{2C_1(X_i)|g'(X_i^{\top}\theta_0)| + C_0(X_i)|g''(X_i^{\top}\theta_0)|\right\}\left(\nabla_2 F_n(\theta_0)\right)^{-1}X_iX_i^{\top}}_{op}\norm{\theta - \theta_0}_2\\
&\qquad+\norm{\frac{2}{n}\sum_{i=1}^n C_2(X_i)|Y_i - g(X_i^{\top}\theta_0)|\left(\nabla_2 F_n(\theta_0)\right)^{-1}X_iX_i^{\top}}_{op}\norm{\theta - \theta_0}_2^{\alpha}\\
&\le L_2(\theta_0)\norm{\theta - \theta_0}_2^{2} + L_{1 + \alpha}\norm{\theta - \theta_0}_2^{1 + \alpha} + L_1(\theta_0)\norm{\theta - \theta_0}_2 + L_{\alpha}(\theta_0)\norm{\theta - \theta_0}_2^{\alpha}.
\end{align*}
This completes the verification of condition~\eqref{eq:HolderContinuity} of Theorem~\ref{thm:NewtonStepDifferentiable} with right hand side there replaced by $\omega(\norm{\theta - \theta_0}_2)$, where for $r\ge 0,$
\[
\omega(r) = L_2(\theta_0)r^2 + L_{1 + \alpha}(\theta_0)r^{1 + \alpha} + L_1(\theta_0)r + L_{\alpha}(\theta_0)r^{\alpha}.
\]
Following the proof of Theorem~\ref{thm:NewtonStepDifferentiable}, the assumption~\eqref{eq:DerivativeSmall} implies the result.
\end{proof}
%%%%%%%%%%%%%%%%%%%%%%%%%%%%%%%%%%%%%%%
%%%%%%%%%%%%%%%%%%%%%%%%%%%%%%%%%%%%%%%
\section{Proofs of Results in Section~\ref{sec:Constrained}}\label{appsec:Constrained}
\subsection{Proof of Corollary~\ref{cor:EqualityConstrained}}
\begin{proof}
Define the function
\[
g_n(\beta, \nu) := \begin{bmatrix}\nabla F_n(\beta) + A^{\top}\nu\\A\beta - b\end{bmatrix}.
\]
It follows that
\[
\nabla g_n(\beta, \nu) := \begin{bmatrix}\nabla_2F_n(\beta) & A^{\top}\\ A &0\end{bmatrix}.
\]
So, $\beta^{\star}$ is a solution of the optimization problem if there exists a vector $\nu^{\star}$ such that $g_n(\beta^{\star}, \nu^{\star}) = 0$. From Theorem 2.2, it follows that if 
\begin{equation}\label{eq:CheckHolderContinuity}
\norm{[\nabla g_n(\beta_0, \nu_0)]^{-1}\left(\nabla g_n(\beta, \nu) - \nabla g_n(\beta_0, \nu_0)\right)}_{op} \le L\norm{\begin{pmatrix}\beta\\\nu\end{pmatrix} - \begin{pmatrix}\beta_0\\\nu_0\end{pmatrix}}_2^{\alpha},
\end{equation}
for $(\beta, \nu)$ in a ball around $(\beta_0, \nu_0)$ and if 
\begin{equation}\label{eq:CheckSmallGradient}
\norm{[\nabla g_n(\beta_0, \nu_0)]^{-1}g_n(\beta_0, \nu_0)}_2 \le (3L)^{-1/\alpha}.
\end{equation}
First note that
\[
\nabla g_n(\beta, \nu) - \nabla g_n(\beta_0, \nu_0) = \begin{bmatrix}\nabla_2F_n(\beta) - \nabla_2F_n(\beta_0)&0\\0&0\end{bmatrix},
\]
and using the inverse of a block matrix, we get $\left[\nabla g_n(\beta_0, \nu_0)\right]^{-1}\left(\nabla g_n(\beta, \nu) - \nabla g_n(\beta_0, \nu_0)\right)$ is given by
\[
\begin{bmatrix}\{I - [\nabla_2F_n(\beta_0)]^{-1}A^{\top}(A[\nabla_2F_n(\beta_0)]^{-1}A^{\top})^{-1}A\}[\nabla_2F_n(\beta_0)]^{-1}[\nabla_2F_n(\beta) - \nabla_2F_n(\beta_0)]&0\\0&0\end{bmatrix}.
\]
This implies that
\begin{align*}
&\norm{\left[\nabla g_n(\beta_0, \nu_0)\right]^{-1}\left(\nabla g_n(\beta, \nu) - \nabla g_n(\beta_0, \nu_0)\right)}_{op}\\ &\quad\le \norm{I - [\nabla_2F_n(\beta_0)]^{-1}A^{\top}(A[\nabla_2F_n(\beta_0)]^{-1}A^{\top})^{-1}A}_{op}\norm{[\nabla_2F_n(\beta_0)]^{-1}[\nabla_2F_n(\beta) - \nabla_2F_n(\beta_0)]}_{op}.
\end{align*}
Since
\begin{align*}
&\norm{[\nabla_2F_n(\beta_0)]^{-1}A^{\top}(A[\nabla_2F_n(\beta_0)]^{-1}A^{\top})^{-1}A}_{op}\\ &\quad= \norm{[\nabla_2F_n(\beta_0)]^{-1}A^{\top}(A[\nabla_2F_n(\beta_0)]^{-1}A^{\top})^{-1}A[\nabla_2F_n(\beta_0)]^{-1}A^{\top}}_{op} = 1,
\end{align*}
we get that
\[
\norm{\left[\nabla g_n(\beta_0, \nu_0)\right]^{-1}\left(\nabla g_n(\beta, \nu) - \nabla g_n(\beta_0, \nu_0)\right)}_{op} \le \norm{[\nabla_2F_n(\beta_0)]^{-1}[\nabla_2F_n(\beta) - \nabla_2F_n(\beta_0)]}_{op}.
\]
This proves the condition~\eqref{eq:CheckHolderContinuity}. For condition~\eqref{eq:CheckSmallGradient}, note that
\[
g_n(\beta_0,\nu_0) = \begin{bmatrix}\nabla F_n(\beta_0) + A^{\top}\nu_0\\0\end{bmatrix}.
\]
Again using the inverse of a block matrix, we get that $[\nabla g_n(\beta_0,\nu_0)]^{-1}g_n(\beta_0,\nu_0)$ is
\[
\begin{bmatrix}\{I - [\nabla_2F_n(\beta_0)]^{-1}A^{\top}(A[\nabla_2F_n(\beta_0)]^{-1}A^{\top})^{-1}A\}[\nabla_2F_n(\beta_0)]^{-1}(\nabla F_n(\beta_0) + A^{\top}\nu_0)\\\left(A[\nabla_2F_n(\beta_0)]^{-1}A^{\top}\right)^{-1}A[\nabla_2F_n(\beta_0)]^{-1}(\nabla F_n(\beta_0) + A^{\top}\nu_0)\end{bmatrix}.
\]
By the same reasoning, we have that
\begin{align*}
&\norm{[\nabla g_n(\beta_0,\nu_0)]^{-1}g_n(\beta_0,\nu_0)}_2\\ &\quad\le \left(1 + \norm{(A[\nabla_2F_n(\beta_0)]^{-1}A^{\top})^{-1}A}_{op}\right)\norm{[\nabla_2F_n(\beta_0)]^{-1}(\nabla F_n(\beta_0) + A^{\top}\nu_0)}_2.
\end{align*}
\end{proof}
%%%%%%%%%%%%%%%%%%%%%%%%%%%%%%%%%%%%%%%
%%%%%%%%%%%%%%%%%%%%%%%%%%%%%%%%%%%%%%%
\section{Proofs of Results in Section~\ref{sec:Applications}}\label{appsec:Applications}
\begin{proof}[Proof of Corollary~\ref{cor:LeaveOneOut}]
Theorem~\ref{thm:ConvexMEst} implies that
\begin{equation}\label{eq:FirstInequality}
\norm{\hat{\theta}_{-I,n} - \hat{\theta}_n + T_{-I,n}}_2 \le \max_{\substack{1\le i \le n,\\i\neq I}}\,\left\{C\left(1.5\norm{T_{-I,n}}_2, W_i\right) - 1\right\}1.5\norm{T_{-I,n}}_2,
\end{equation}
if
\begin{equation}\label{eq:FirstIneqCondition}
\max_{\substack{1\le i \le n,\,i\neq I}}\,C\left(1.5\norm{T_{-I,n}}_2, W_i\right) \le \frac{4}{3},
\end{equation}
where
\begin{equation}\label{eq:TwoTerms}
T_{-I,n} := \Bigg(\sum_{{1\le i\le n,\,i\neq I}}\nabla_2 L(\hat{\theta}_n, W_i)\Bigg)^{-1}\sum_{\substack{1\le i\le n,\,i\neq I}} \nabla L(\hat{\theta}_n, W_i).
\end{equation} 
To prove the result from this inequality, we need to simplify and control $T_{-I,n}$ and $\norm{T_{-I,n}}_2$.
Since $\hat{\theta}_n$ is the solution of the equation~\eqref{eq:ThetaHat}, we get for all $1\le I\le n$,
\[
\sum_{\substack{1\le i\le n,\,i\neq I}} \nabla L(\hat{\theta}_n, W_i) = -\nabla L(\hat{\theta}_n, W_I).
\]
Also, note that
% \[
% \sum_{{1\le i\le n,\,i\neq I}}\nabla_2 L(\hat{\theta}_n, W_i) = n\hat{\mathcal{Q}}_n - \nabla_2L(\hat{\theta}_n, W_I),
% \]
% and so,
\begin{align*}
&\norm{T_{-I,n} + n^{-1}\hat{\mathcal{Q}}_n^{-1}\nabla L(\hat{\theta}_n, W_I)}_2\\ &\qquad\le \norm{I - (n\hat{\mathcal{Q}}_n - \nabla_2L(\hat{\theta}_n, W_I))^{-1}n\hat{\mathcal{Q}}_n}_{op}\norm{n^{-1}\hat{\mathcal{Q}}_n^{-1}\nabla L(\hat{\theta}_n, W_I)}_2\\
&\qquad= \norm{I - (I - n^{-1}\hat{\mathcal{Q}}_n^{-1}\nabla_2L(\hat{\theta}_n, W_I))^{-1}}_{op}\norm{n^{-1}\hat{\mathcal{Q}}_n^{-1}\nabla L(\hat{\theta}_n, W_I)}_2\\
&\qquad= \norm{(I - n^{-1}\hat{\mathcal{Q}}_n^{-1}\nabla_2L(\hat{\theta}_n, W_I))^{-1}n^{-1}\hat{\mathcal{Q}}_n^{-1}\nabla_2L(\hat{\theta}_n, W_I)}_{op}\norm{n^{-1}\hat{\mathcal{Q}}_n^{-1}\nabla L(\hat{\theta}_n, W_I)}_2\\
&\qquad\le \frac{n^{-2}\norm{\hat{\mathcal{Q}}_n^{-1}\nabla_2L(\hat{\theta}_n, W_I)}_{op}}{1 - n^{-1}\norm{\hat{\mathcal{Q}}_n^{-1}\nabla_2L(\hat{\theta}_n, W_I)}_{op}}\norm{\hat{\mathcal{Q}}_n^{-1}\nabla L(\hat{\theta}_n, W_I)}_2\\ 
&\qquad= n^{-1}\norm{\hat{\mathcal{Q}}_n^{-1}\nabla_2L(\hat{\theta}_n, W_I)}_{op}\delta_{I,n}.
\end{align*}
From this inequality, it follows that $\norm{T_{-I,n}}_2 \le \delta_{I,n}.$ This inequality implies that condition~\eqref{eq:FirstIneqCondition} is satisfied if
\[
\max_{\substack{1\le i\le n,\,i\neq I}}\,C\left(1.5\delta_{I,n}, W_i\right) \le \frac{4}{3},
\]
which in turn implied by the condition~\eqref{eq:LOOCVCondition}. Substituting the inequalities above in~\eqref{eq:FirstInequality}, we get
\begin{align*}
&\norm{\hat{\theta}_{-I,n} - \hat{\theta}_n - n^{-1}\hat{\mathcal{Q}}_n^{-1}\nabla L(\hat{\theta}_n, W_I)}_2\\ &\qquad\le \frac{3}{2}\left[\max_{\substack{1\le i\le n,\,i\neq I}}\,C(1.5\delta_{I,n}, W_i) - 1 + n^{-1}\norm{\hat{\mathcal{Q}}_n^{-1}\nabla_2L(\hat{\theta}_n, W_I)}_{op}\right]\delta_{I,n}.
\end{align*}
\end{proof}
\begin{proof}[Proof of Corollary~\ref{cor:LeaveMoreOut}]
Theorem~\ref{thm:ConvexMEst} implies that
\begin{equation}\label{eq:FirstInequalityMore}
\norm{\hat{\theta}_{-I,n} - \hat{\theta}_n + T_{-I,n}}_2 \le \max_{\substack{1\le i \le n,\,i\notin I}}\,\left\{C\left(1.5\norm{T_{-I,n}}_2, W_i\right) - 1\right\}1.5\norm{T_{-I,n}}_2,
\end{equation}
if
\begin{equation}\label{eq:FirstIneqConditionMore}
\max_{\substack{1\le i \le n,\,i\neq I}}\,C\left(1.5\norm{T_{-I,n}}_2, W_i\right) \le \frac{4}{3},
\end{equation}
where
\begin{equation}\label{eq:TwoTermsMore}
T_{-I,n} := \Bigg(\sum_{{1\le i\le n,\,i\notin I}}\nabla_2 L(\hat{\theta}_n, W_i)\Bigg)^{-1}\sum_{\substack{1\le i\le n,\,i\notin I}} \nabla L(\hat{\theta}_n, W_i).
\end{equation} 
To prove the result from this inequality, we need to simplify and control $T_{-I,n}$ and $\norm{T_{-I,n}}_2$.
Since $\hat{\theta}_n$ is the solution of the equation~\eqref{eq:ThetaHat}, we get for all $1\le I\le n$,
\[
\sum_{\substack{1\le i\le n,\,i\notin I}} \nabla L(\hat{\theta}_n, W_i) = -\sum_{i\in I}\nabla L(\hat{\theta}_n, W_I).
\]
Also, note that
% \[
% \sum_{{1\le i\le n,\,i\notin I}}\nabla_2 L(\hat{\theta}_n, W_i) = n\hat{\mathcal{Q}}_n - \nabla_2L(\hat{\theta}_n, W_I),
% \]
% and so,
\begin{align*}
&\norm{T_{-I,n} + n^{-1}\hat{\mathcal{Q}}_n^{-1}\sum_{i\in I}\nabla L(\hat{\theta}_n, W_I)}_2\\ &\quad\le \norm{I - \left(n\hat{\mathcal{Q}}_n - \sum_{i\in I}\nabla_2L(\hat{\theta}_n, W_i)\right)^{-1}n\hat{\mathcal{Q}}_n}_{op}\norm{n^{-1}\hat{\mathcal{Q}}_n^{-1}\sum_{i\in I}\nabla L(\hat{\theta}_n, W_I)}_2\\
&\quad= \norm{I - \left(I - n^{-1}\hat{\mathcal{Q}}_n^{-1}\sum_{i\in I}\nabla_2L(\hat{\theta}_n, W_i)\right)^{-1}}_{op}\norm{n^{-1}\hat{\mathcal{Q}}_n^{-1}\sum_{i\in I}\nabla L(\hat{\theta}_n, W_I)}_2\\
&\quad= \norm{\left(I - n^{-1}\hat{\mathcal{Q}}_n^{-1}\sum_{i\in I}\nabla_2L(\hat{\theta}_n, W_i)\right)^{-1}n^{-1}\hat{\mathcal{Q}}_n^{-1}\sum_{i\in I}\nabla_2L(\hat{\theta}_n, W_i)}_{op}\norm{n^{-1}\hat{\mathcal{Q}}_n^{-1}\sum_{i\in I}\nabla L(\hat{\theta}_n, W_I)}_2\\
&\quad\le \frac{n^{-1}\norm{\hat{\mathcal{Q}}_n^{-1}\sum_{i\in I}\nabla_2L(\hat{\theta}_n, W_i)}_{op}}{1 - n^{-1}\norm{\hat{\mathcal{Q}}_n^{-1}\sum_{i\in I}\nabla_2L(\hat{\theta}_n, W_i)}_{op}}\norm{n^{-1}\hat{\mathcal{Q}}_n^{-1}\sum_{i\in I}\nabla L(\hat{\theta}_n, W_I)}_2\\ 
&\quad= n^{-1}\norm{\hat{\mathcal{Q}}_n^{-1}\sum_{i\in I}\nabla_2L(\hat{\theta}_n, W_i)}_{op}\delta_{I,n}.
\end{align*}
From this inequality, it follows that $\norm{T_{-I,n}}_2 \le \delta_{I,n}.$ This inequality implies that condition~\eqref{eq:FirstIneqConditionMore} is satisfied if
\[
\max_{\substack{1\le i\le n,\,i\notin I}}\,C\left(1.5\delta_{I,n}, W_i\right) \le \frac{4}{3},
\]
which in turn implied by the condition~\eqref{eq:LOOCVCondition}. Substituting the inequalities above in~\eqref{eq:FirstInequalityMore}, we get
\begin{align*}
&\norm{\hat{\theta}_{-I,n} - \hat{\theta}_n - n^{-1}\hat{\mathcal{Q}}_n^{-1}\sum_{i\in I}\nabla L(\hat{\theta}_n, W_I)}_2\\ &\qquad\le \frac{3}{2}\left[\max_{\substack{1\le i\le n,\,i\notin I}}\,C(1.5\delta_{I,n}, W_i) - 1 + n^{-1}\norm{\hat{\mathcal{Q}}_n^{-1}\sum_{i\in I}\nabla_2L(\hat{\theta}_n, W_i)}_{op}\right]\delta_{I,n}.
\end{align*}
\end{proof}

\end{document}